\DeclareMathAlphabet{\mathpzc}{OT1}{pzc}{m}{it}
\colorlet{darkblue}{blue!50!black}
\newtheorem{theorem}{Theorem}[section]
\newtheorem{lemma}[theorem]{Lemma}
\newtheorem{proposition}[theorem]{Proposition}
\newtheorem{example}[theorem]{Example}
\newtheorem{remark}[theorem]{Remark}
\newtheorem{hypothesis}[theorem]{Hypothesis}
\let\originalleft\left
\let\originalright\right
\renewcommand{\left}{\mathopen{}\mathclose\bgroup\originalleft}
\renewcommand{\right}{\aftergroup\egroup\originalright}
\let\emptyset\varnothing
\renewcommand{\d}{\/\mathrm{d}\/}
\def\w{\textbf{W}^{\varepsilon}_{{\theta}^{\varepsilon}}}
\def\L{\mathbb{L}}
\def\A{\mathrm{A}}
\def\I{\mathrm{I}}
\def\C{\mathrm{C}}
\def\f{\boldsymbol{f}}
\def\J{\mathrm{J}}
\def\B{\mathrm{B}}
\def\D{\mathrm{D}}
\def\y{\boldsymbol{y}}
\def\x{\boldsymbol{x}}
\def\g{\boldsymbol{g}}
\def\z{\boldsymbol{z}}
\def\v{\boldsymbol{v}}
\def\V{\mathbb{v}}
\def\w{\boldsymbol{w}}
\def\W{\mathrm{W}}
\def\N{\mathbb{N}}
\def\V{\mathbb{V}}
\def\wi{\widetilde}
\def\P{\mathrm{P}}
\def\H{\mathbb{H}}
\newcommand{\R}{\mathbb{R}}
\renewcommand{\d}{\/\mathrm{d}\/}
\newcommand{\Addresses}{{
		\footnote{
			
			\noindent \textsuperscript{1,2,3}Department of Mathematics, Indian Institute of Technology Roorkee-IIT Roorkee,
			Haridwar Highway, Roorkee, Uttarakhand 247667, INDIA.\par\nopagebreak
			\noindent  \textit{e-mail:} \texttt{Manil T. Mohan: maniltmohan@ma.iitr.ac.in, maniltmohan@gmail.com.}
			
			\textit{e-mail:} \texttt{Kush Kinra: kkinra@ma.iitr.ac.in.}
			
			\textit{e-mail:} \texttt{Sagar Gautam: sagar\_g@ma.iitr.ac.in.}
			
			\noindent \textsuperscript{*}Corresponding author.
			
			\textit{Key words:} convective Brinkman-Forchheimer equations, monotone operators, strong solution, stabilization, feedback control, time optimal control.
			
			Mathematics Subject Classification (2020): Primary 49J20, 49N35, 93D15; Secondary 35Q35, 76D03.

}}}
\begin{document}
	
	\title[CBF equations with potential]{2D and 3D convective Brinkman-Forchheimer equations perturbed by a subdifferential and applications to control problems
		\Addresses}
	\author[S. Gautam, K. Kinra and M. T. Mohan]
	{Sagar Gautam\textsuperscript{1}, Kush Kinra\textsuperscript{2} and Manil T. Mohan\textsuperscript{3*}}
	
	\maketitle

		\begin{abstract}
		The  following convective Brinkman-Forchheimer (CBF)  equations (or damped Navier-Stokes  equations) with potential 
		\begin{equation*}
		\frac{\partial \boldsymbol{y}}{\partial t}-\mu \Delta\boldsymbol{y}+(\boldsymbol{y}\cdot\nabla)\boldsymbol{y}+\alpha\boldsymbol{y}+\beta|\boldsymbol{y}|^{r-1}\boldsymbol{y}+\nabla p+\Psi(\boldsymbol{y})\ni\boldsymbol{g},\ \nabla\cdot\boldsymbol{y}=0, 
		\end{equation*}
	 in a $d$-dimensional torus  is considered in this work, where $d\in\{2,3\}$, $\mu,\alpha,\beta>0$ and $r\in[1,\infty)$. For $d=2$ with $r\in[1,\infty)$ and $d=3$ with $r\in[3,\infty)$ ($2\beta\mu\geq 1$ for $d=r=3$), we establish the existence of \textsf{\emph{a unique global strong solution}} for the above multi-valued  problem with the help of the \textsf{\emph{abstract theory of $m$-accretive operators}}. 
	 Moreover, we demonstrate that the same results hold \textsf{\emph{local in time}} for the case $d=3$ with $r\in[1,3)$ and $d=r=3$ with $2\beta\mu<1$. We explored the $m$-accretivity of the nonlinear as well as multi-valued operators, Yosida approximations and their properties, and several higher order energy estimates in the proofs.  For $r\in[1,3]$, we {quantize (modify)} the Navier-Stokes  nonlinearity $(\boldsymbol{y}\cdot\nabla)\boldsymbol{y}$ to establish the existence and uniqueness results, while for $r\in[3,\infty)$ ($2\beta\mu\geq1$ for $r=3$), we handle the Navier-Stokes nonlinearity by the nonlinear damping term $\beta|\boldsymbol{y}|^{r-1}\boldsymbol{y}$. Finally, we  discuss the applications of the above developed theory  in feedback control problems like  flow invariance, time optimal control and stabilization. 
	\end{abstract}

	\section{Introduction}\label{sec1}\setcounter{equation}{0}
	\subsection{The model} 
Let $\mathbb{T}^{d}=\big(\R/\mathbb{Z}\big)^{d}$ be a $d$-dimensional torus ($d=2,3$).  The convective Brinkman-Forchheimer (CBF) equations describe the motion of incompressible fluid flows in a saturated porous medium (\cite{DAN,DAN1}). {A porous medium or a porous material is defined as a solid (often called a matrix) permeated by an interconnected network of pores (voids) filled with a fluid (e.g., air or water). 
We consider a porous medium that is fully saturated with a fluid (\cite{TCY}).} With control applications in mind, we consider the following  CBF equations with potential (perturbed by a subdifferential, see Hypothesis \ref{hyp1} below): 
	\begin{equation}\label{1}
		\left\{
		\begin{aligned}
			\frac{\partial \y}{\partial t}-\mu \Delta\y+(\y\cdot\nabla)\y+\alpha\y+\beta|\y|^{r-1}\y+\nabla p+\Psi(\y)&\ni\g, \ \text{ in } \ \mathbb{T}^{d}\times(0,\infty), \\ \nabla\cdot\y&=0, \ \text{ in } \ \mathbb{T}^{d}\times[0,\infty), \\
			\y(0)&=\y_0 \ \text{in} \ \mathbb{T}^{d},
		\end{aligned}
		\right.
	\end{equation}
	where $\y(x,t):\mathbb{T}^{d}\times(0,\infty)\to\R^d$ represents the velocity field at time $t$ and position $x$, $p(x,t):\mathbb{T}^{d}\times(0,\infty)\to\R$ denotes the pressure field, $\g(x,t):\mathbb{T}^{d}\times(0,\infty)\to\R^d$ is an external forcing and $\Psi(\cdot)\subset\mathbb{L}^2(\mathbb{T}^d)\times\mathbb{L}^2(\mathbb{T}^d)$ is a multi-valued map. Moreover, $\y(\cdot,\cdot)$, $p(\cdot,\cdot)$ and $\g(\cdot,\cdot)$ satisfies the following periodic boundary conditions:
	\begin{align}\label{2}
		\y(x+e_{i},\cdot) = \y(x,\cdot), \ p(x+e_{i},\cdot) = p(x,\cdot) \ \text{and} \ \g(x+e_{i},\cdot) = \g(x,\cdot),
	\end{align}
for every $x\in\R^{d}$ and $i=1,\ldots,d,$ where $\{e_{1},\dots,e_{d}\}$ is the canonical basis of $\R^{d}.$ The constant $\mu>0$ denotes the \emph{Brinkman coefficient} (effective viscosity), the positive constants $\alpha$ and $\beta$ represent the \emph{Darcy} (permeability of porous medium) and \emph{Forchheimer} (proportional to the porosity of the material) coefficients, respectively. {The term $\beta|\y|^{r-1}\y$ with $r\geq1$ is known as the absorption or damping term and $r$ is known as the absorption exponent (cf. \cite{SNA}). The exponent $r=3$ is called the \emph{critical exponent}.} The critical homogeneous CBF equations (system \eqref{1} without potential, $r=3$ and $\g=\mathbf{0}$)  have the same scaling as Navier-Stokes equations (NSE)  only when $\alpha=0$ (\cite{KWH}).  We refer the case $r<3$ as \emph{subcritical} and $r>3$ as \emph{supercritical} (or fast growing nonlinearities). The model is accurate when the flow velocity is too large for Darcy's law to be valid, and apart from that the porosity is not too small (\cite{MTT}).  If one considers system \eqref{1} without potential and if $\alpha=\beta=0$, then we obtain the classical NSE, and if $\alpha, \beta>0$, then it can be considered as damped NSE. 
For 3D case, the authors in \cite{MTT} introduced the extra term $\beta_1|\y|^{r_1-1}\y$ in system \eqref{1} to model a pumping, when $\beta_1<0$ by opposition to the damping modeled through the term $\beta|\y|^{r-1}\y$ when $\beta>0$. For $\beta>0$ and $\beta_1\in\R$, the existence of weak solutions were obtained by assuming $r>r_1\geq 1$, and the continuous dependence on the data as well as the existence of strong solutions were established for $r>3$. As we are working on the torus $\mathbb{T}^d$  and $r_1<r$, by modifying some calculations accordingly, similar results of this paper holds true for the model proposed in \cite{MTT} {as well}. 

\subsection{Literature survey}
In the literature, CBF equations are also known as tamed NSE or NSE modified with an absorption term, cf. \cite{SNA,MRXZ} etc., and references therein. The damping $\alpha\y+\beta|\y|^{r-1}\y$ arises from the resistance to the motion of the flow, which describes several physical phenomena such as drag or friction effects, porous media flow, some dissipative mechanisms, cf. \cite{ZCQJ,KWH,MTT,ZZXW} etc., and references therein.  The continuous data assimilation problem for 3D CBF model is described in \cite{MTT}.  The global solvability results for CBF model (for fast growing nonlinearities in 3D)  can be accessed from \cite{SNA,KWH,KT2,MTT,MT1}, etc.  
Similar to 3D NSE, the existence of a unique global (in time) weak solution of 3D CBF equations with $r\in[1,3)$ (for any $\beta,\mu>0$) and $r=3$ (for $2\beta\mu<1$) is also an open problem.

The theory of monotone operators is an important tool in the study of nonlinear operator equations, we refer the readers to \cite{VB1,VB2,ZdSmP, Hu,JMSS}, etc., for more details. When the operator has some kind of monotonicity properties, then one can pass {to} the limit in the Galerkin and Faedo-Galerkin approximations of the original equation, with a-priori estimates that  are in general weaker than those necessary in the compactness methods (\cite{BPWFSS}). In particular, monotone operators are  suitable tools for studying variational inequalities (\cite{VB2}). Local and global solvability of NSE and Boussinesq equations with potential (or perturbed by a subdifferential) is established in \cite{AIL,AIL3}, respectively.

{
In a standard Faedo-Galerkin discretization scheme under a Gelfand triplet $\V\subset\H\equiv\H'\subset\V'$ (all are Hilbert spaces), we approximate the infinite-dimensional operator equation by a finite-dimensional one, whose well-posedness can be ensured by some classical existence and uniqueness results or some fixed point arguments.  Then we find some uniform bounds of these approximate solutions (a-priori estimates) and use the Banach-Alaoglu compactness argument to find a subsequence which converges weakly. Using some compactness results like Aubin-Lions, one can extract a further subsequence which converges strongly, so that the   weak limit provides us the solution of the original equation (cf. \cite{Te}). Whereas, the monotone operators is a class of nonlinear operators which provide a broad framework for the analysis of infinite-dimensional problems. Of special interest are the so-called maximal monotone operators (or $m$-accretive operators on Hilbert spaces identified with its own dual), which exhibit remarkable surjectivity properties. Further, for inclusion problems, once the operator becomes maximal, then one can use the Yosida approximation scheme, which is an important regularization technique for solving the evolution systems (\cite{VB1,OPHB}, etc.). In this way, one can avoid the tedious Faedo-Galerkin approximation scheme also.}

Control of ordinary/partial differential equations associated with fluid flow motions have numerous applications in science, engineering and technology. Behavior and control  of turbulent flows are some of the most difficult problems in fluid mechanics. 
By control of turbulent flows, we meant  to determine an optimal action which \emph{minimizes the turbulence} inside the flow, (cf. \cite{F.T.,Fu,G,OpVf}).  
One of the interesting control problem is the flow invariance preserving feedback controllers for fluid flows. The authors in \cite{VBSS} developed a procedure to design feedback controllers that ensure the resultant dynamics of turbulence preserve some prescribed physical constraints such as enstrophy, helicity, etc. Flow invariance of controlled flux sets with respect to NSE  is discussed in \cite{VBNH}. The existence problem of the variational inequality for Stokes equations and NSE with constraints of obstacle type is considered in  \cite{TFu,MGNK}, respectively. An another interesting feedback control problem is the time optimal control problem, where one finds a control of bang-bang type to reach a fixed state from an arbitrary state in minimal time (cf. \cite{VB2,wwxz}).  As far as the time optimal control of fluid flow models are concerned,  the time optimal control problem for 2D NSE, Boussinesq equations, 3D Navier-Stokes-Voigt equations, 2D CBF equations with $r\in[1,3],$ and 3D NS-$\alpha$ model is considered in \cite{CTATMN,TiOp1,SlGw,TiOp2,DsLt},  respectively. 
Stabilization of NSE is dealt to stabilize the equilibrium solution of NSE by using finite-dimensional feedback controllers having support either in interior or on the boundary of the domain (cf. \cite{VB6, VB7,VBLT}, etc.). The internal stabilizability of NSE (with slip and non-slip Dirichlet boundary conditions) is developed in \cite{VBL, VBT}. 
The author in \cite{AIL2} discussed the feedback stabilization of NSE preserving the  invariance of  a given convex set.  

\subsection{Difficulties, approaches and novelties}
The main concern for considering the CBF equations \eqref{1} in a $d$-dimensional torus is as follows. In the  torus $\mathbb{T}^d$, the Helmholtz-Hodge projection $\mathcal{P}$ and $\Delta$ {commutes} (\cite[Theorem 2.22]{JCR4}). So, the equality  (\cite[Lemma 2.1]{KWH})
\begin{align}\label{3}
	&\int_{\mathbb{T}^{d}}(-\Delta \boldsymbol{y}(x))\cdot|\boldsymbol{y}(x)|^{r-1}\boldsymbol{y}(x)\d x\nonumber\\&=\int_{\mathbb{T}^{d}}|\nabla \boldsymbol{y}(x)|^2|\boldsymbol{y}(x)|^{r-1}\d x+4\left[\frac{r-1}{(r+1)^2}\right]\int_{\mathbb{T}^{d}}|\nabla|\boldsymbol{y}(x)|^{\frac{r+1}{2}}|^2\d x,
\end{align}
is quite useful in obtaining regularity results. It is also noticed in the literature that the above equality may not be useful in  domains other than the whole domain or a $d$-dimensional torus (see \cite{KT2,MT1},  etc. for a detailed discussion). Recently, the authors in  \cite{DsSz} addressed this regularity problem for Dirichlet's boundary conditions and the well-posedness of CBF equations with potential in bounded domains will be a future work. 

The main difficulty with nonlinear terms arises when we multiply them by a generalized function  to get $m$-accretivity.  In the literature for  NSE with potential (cf. \cite{AIL,AIL2}) or for feedback control problems (cf. \cite{VBSS,TiOp1}), $\mathbb{V}$-quantization of the nonlinear term $(\y\cdot\nabla)\y$ is used to obtain the $m$-accretivity of the operators. {The terminologies ``quantize'' and ``quantization'' have been taken from the work \cite{VBSS}, which says that ``The intermediate $m$-accretive construction of the nonlinearity used in this paper is in fact a form of quantization conceptually similar to constructive quantum field theory \cite{qu2,qu1}.''} Whereas, for  the supercritical CBF  equations \eqref{1} (that is, for $r>3$),  one can handle the NSE nonlinearity  $(\y\cdot\nabla)\y$ by the Forchheimer  nonlinearity  $|\y|^{r-1}\y$ (see steps \eqref{2.30}, \eqref{373}, etc. below). Along with this fact, the monotonicity of the nonlinear term $|\y|^{r-1}\y$ helps to obtain the $m$-accretivity of the operators  without  using a quantization technique (see Proposition \ref{prop33} below). The same results hold true for $r=3$ with $2\beta\mu\geq 1$ also without quantization, but for $r\in[1,3]$ ($2\beta\mu<1$ for $r=3$), we need an $\wi\L^4$-quantization technique (Appendix  \ref{Appen.}). 


The condition $(\mathrm{A}\y,\Phi_{\lambda}(\y))\geq -\gamma(1+\|\y\|_{\H}^{2})$  is considered in Hypothesis \ref{hyp1} (H.3) for the case $d=3$ with $r\in[5,\infty)$. This is required to handle the term $|(\mathcal{C}(\y_{\lambda}),\Phi_{\lambda}(\y_{\lambda}))|$ in Proposition \ref{prop3.3}, while taking the inner product with $\Phi_{\lambda}(\y_{\lambda})$ for the Yoisida approximated stationary problem. The Sobolev embedding $\V\subset\wi\L^p$ for any $p\in[1,\infty)$ helps us to resolve this problem in 2D, whereas in 3D, the embedding is true only for $p\in[2,6]$.  Moreover, for the supercritical case,  by choosing
$
	\mathcal{F}^{1}(\cdot) = \mu(1-\delta_1)\mathrm{A}+\beta(1-\delta_2)\mathcal{C}(\cdot)\ \text{ and }\ 
	\mathcal{F}^{2}(\cdot) = \mu\delta_1\mathrm{A}+\mathcal{B}(\cdot)+\beta\delta_2\mathcal{C}(\cdot)+\kappa\mathrm{I},$
 for some $\delta_1, \delta_2\in(0,1)$ and $\kappa\geq\varrho=\frac{r-3}{2\mu(r-1)}\left(\frac{2}{\beta\mu (r-1)}\right)^{\frac{2}{r-3}},$ we used the well-known  perturbation theorem for nonlinear $m$-accretive operators (\cite[Theorem 3.5, Chapter II]{VB1}) to show that the operator $\mathcal{F}^{1}+\mathcal{F}^{2}=\mu\A+\mathcal{B}(\cdot)+\beta\mathcal{C}(\cdot)+\kappa\I$  with the domain $\D(\A)$  is $m$-accretive in $\H$.
For NSE with potential, the author in \cite{AIL} proved a result similar to Theorem \ref{thm1.2} by assuming that $\y_0\in\V\cap\D(\Phi)$ and $\f\in\mathrm{L}^2(0,T;\H)$. Under the same assumptions, we are able to prove Theorem \ref{thm1.2}  for the case $d=2,3$ and $r\in[1,3]$ only (Appendix \ref{Appen.}). For $d=2,3$ and $r\in(3,\infty)$, we need $\y_0\in\D(\A)\cap\D(\Phi)$ and $\f\in\mathrm{W}^{1,1}(0,T;\H)$ to control the term $\int_0^T \|\Phi_\lambda(\y_{\lambda}(t))\|_{\H}^2 \d t$ (Step IV, Proposition \ref{prop4.1}). In order to do this, we first obtain the regularity estimates  for 	$\left\|\frac{\d^+\y_\lambda(\cdot)}{\d t}\right\|_{\H}$  and $\int_0^T \left\|\frac{\d(\nabla\y_\lambda(t))}{\d t}\right\|_{\H}^2 \d t$ (Step II, Proposition \ref{prop4.1}) by taking  the difference of Yosida approximated CBF equations (see \eqref{3.29} below) at $t+h$ and $t$ for $h>0$ and $t\in[0,T]$ and then using the monotonicity of $\Phi_\lambda(\cdot)$. Due to the lack of Gateaux derivative of $\Phi_{\lambda}(\y_{\lambda}(\cdot))$, one cannot differentiate the equation \eqref{3.29}  and get the required estimates by taking inner product with $\frac{\d\y_\lambda(\cdot)}{\d t}$ in the resulting equation. This kind of difficulty is not appearing in the case of NSE. 


Flow {invariance} preserving feedback controllers for 2D as well 3D NSE with normal cone as potential were considered in \cite{VBSS}. The results obtained in the work \cite{VBSS} were global for $d=2$ and local for $d=3$. But the presence of the damping term $|\y|^{r-1}\y$ helps us to obtain global results in 3D as well  for supercritical CBF equations.   The author in \cite{TiOp2} discussed the time optimal control problem for 2D CBF equations with $r\in[1,3]$ by using a $\V$-quantization and $m$-accretivity of the nonlinear operators. Hypothesis \ref{hyp1} and Proposition \ref{thm1.1} help us to study the time optimal control problem of CBF equations for $d=2,3$ with $r>3$ also. Moreover, the author in \cite{AIL2} examined the feedback stabilization of 2D and 3D NSE preserving the invariance of a given convex set by deducing the existence of  weak solutions (uniqueness only in 2D)  for the NSE system perturbed by a subdifferential. Whereas, for the CBF equations \eqref{1p4}, one can address similar problems  for $d=2,3$ with $r>3$ by establishing  uniqueness results also.
\subsection{Outline of the paper} 
The rest of the paper is organized as follows: The next section  is devoted for the functional settings, definition and properties of linear, bilinear and nonlinear operators. Moreover, we have also stated our main result of this work (see Theorem \ref{thm1.2}) in the same section. In Section \ref{sec3}, we prove $m$-accretivity of single and multi-valued operators (see Propositions \ref{prop33}-\ref{prop3.3}). In Section \ref{sec44}, we derive higher order energy estimates in order to prove Theorem \ref{thm1.2} (see Proposition \ref{prop4.1}). Then, we prove  some convergence results  using the Banach-Alaoglu theorem and Aubin-Lions compactness  lemma (see Proposition \ref{soln}). Lastly, we conclude the proof of Theorem \ref{thm1.2} by using Proposition \ref{prop3.5}, and the uniqueness result is provided in Proposition \ref{unique}. In Section \ref{sec5}, we discuss three applications of Theorem \ref{thm1.2}, namely,  flow invariance preserving feedback controllers, time optimal control problem and feedback stabilization.

\section{Functional Settings and Preliminaries}\label{sec2}\setcounter{equation}{0}

In this section, we provide the necessary functional setting needed to obtain the results of this work. We consider the problem \eqref{1} on a $d$-dimensional torus $\mathbb{T}^{d}=\big(\R/\mathbb{Z}\big)^{d}$, with periodic boundary conditions \eqref{2}.

	\subsection{Function spaces} Let \ $\C_{\mathrm{p}}^{\infty}(\mathbb{T}^d;\R^d)$ denote the space of all infinitely differentiable  functions ($\mathbb{R}^d$-valued) satisfying periodic boundary conditions \eqref{2}.  As observed in \cite{KWH} that the absorption term $\beta|\y|^{r-1}\y$ does not preserve zero average condition like in the case of NSE, so we cannot use the well-known Poincar\'e inequality and we have to deal with  full $\H^1$-norm. The Sobolev space  $\H_{\mathrm{p}}^s(\mathbb{T}^d):=\mathrm{H}_{\mathrm{p}}^s(\mathbb{T}^d;\mathbb{R}^d)$ is the completion of $\C_{\mathrm{p}}^{\infty}(\mathbb{T}^d;\R^d)$  with respect to the $\H^s$-norm $$\|\y\|_{\H^s_{\mathrm{p}}}:=\left(\sum_{0\leq|\alpha|\leq s}\|\D^{\alpha}\y\|_{\mathbb{L}^2(\mathbb{T}^d)}^2\right)^{1/2}.$$ The Sobolev space of periodic functions $\H_{\mathrm{p}}^s(\mathbb{T}^d)$ is the same as (\cite{JCR}) $$\left\{\y:\y(x)=\sum_{k\in\mathbb{Z}^d}\y_{k} \mathrm{e}^{2\pi i k\cdot x},\ \overline{\y}_{k}=\y_{-k},\ \|\y\|_{\H^s_f}:=\left(\sum_{k\in\mathbb{Z}^d}(1+|k|^{2s})|\y_{k}|^2\right)^{\frac{1}{2}}<\infty\right\}.$$ From \cite[Proposition 5.38]{JCR}, we infer that the norms $\|\cdot\|_{\dot{\H}^s_p}$ and $\|\cdot\|_{\dot{\H}^s_f}$ are equivalent. Let us define 
\begin{align*} 
	\mathcal{V}&:=\{\y\in\C_{\mathrm{p}}^{\infty}(\mathbb{T}^d;\R^d):\nabla\cdot\y=0\}.
\end{align*}
The spaces $\H$ and $\widetilde{\L}^{p}$ are the closure of $\mathcal{V}$ in the Lebesgue spaces $\mathrm{L}^2(\mathbb{T}^d;\R^d)$ and $\mathrm{L}^p(\mathbb{T}^d;\R^d)$ for $p\in(2,\infty)$, respectively. The space $\V$ is the closure of $\mathcal{V}$ in the Sobolev space $\mathrm{H}^1(\mathbb{T}^d;\R^d)$. We characterize the spaces $\H$, $\widetilde{\L}^p$ and $\V$ with the norms  $$\|\y\|_{\H}^2:=\int_{\mathbb{T}^d}|\y(x)|^2\d x,\quad \|\y\|_{\widetilde{\L}^p}^p:=\int_{\mathbb{T}^d}|\y(x)|^p\d x\ \text{ and }\ \|\y\|_{\V}^2:=\int_{\mathbb{T}^d}\left(|\y(x)|^2+|\nabla\y(x)|^2\right)\d x,$$ respectively.
Let $(\cdot,\cdot)$ denote the inner product in the Hilbert space $\H$ and $\langle \cdot,\cdot\rangle $ represent the induced duality between the spaces $\V$  and its dual $\V'$ as well as $\widetilde{\L}^p$ and its dual $\widetilde{\L}^{p'}$, where $\frac{1}{p}+\frac{1}{p'}=1$. Note that $\H$ can be identified with its own dual $\H'$. The sum space $\V'+\widetilde{\L}^{p'}$ is well defined (see \cite[Subsection 2.1]{FKS}). Furthermore, we have 
\begin{align*}
(\V'+\widetilde{\L}^{p'})'=	\V\cap\widetilde{\L}^p \  \text{and} \ (\V\cap\widetilde{\L}^p)'=\V'+\widetilde{\L}^{p'},
\end{align*} 
where $\|\y\|_{\V\cap\wi\L^{p}}=\max\{\|\y\|_{\V},\|\y\|_{\wi\L^p}\},$ which is equivalent to the norms  $\|\y\|_{\V}+\|\y\|_{\widetilde{\L}^{p}}$  and $\sqrt{\|\y\|_{\V}^2+\|\y\|_{\widetilde{\L}^{p}}^2}$, and  
\begin{align*}
	\|\y\|_{\V'+\widetilde{\L}^{p'}}&=\inf\{\|\y_1\|_{\V'}+\|\y_2\|_{\wi\L^{p'}}:\y=\y_1+\y_2, \y_1\in\V' \ \text{and} \ \y_2\in\wi\L^{p'}\}\nonumber\\&=
	\sup\left\{\frac{|\langle\y_1+\y_2,\f\rangle|}{\|\f\|_{\V\cap\widetilde{\L}^p}}:\boldsymbol{0}\neq\f\in\V\cap\widetilde{\L}^p\right\}.
\end{align*}
Note that $\V\cap\widetilde{\L}^p$ and $\V'+\widetilde{\L}^{p'}$ are Banach spaces. Moreover, we have the continuous embedding $\V\cap\widetilde{\L}^p\hookrightarrow\V\hookrightarrow\H\cong\H^{\prime}\hookrightarrow\V'\hookrightarrow\V'+\widetilde{\L}^{p'}$. 
\subsection{Linear operator}\label{liop}
Let $\mathcal{P}_p: \L^p(\mathbb{T}^d) \to\wi\L^p,$ $p\in[1,\infty)$ be the Helmholtz-Hodge (or Leray) projection  (cf.  \cite{JBPCK,DFHM}, etc.).	Note that $\mathcal{P}_p$ is a bounded linear operator and for $p=2$,  $\mathcal{P}:=\mathcal{P}_2$ is an orthogonal projection (\cite[Section 2.1]{JCR4}). We define the Stokes operator 
\begin{equation*}
	\A\y:=-\mathcal{P}\Delta\y,\;\y\in\D(\A):=\V\cap\H^{2}_\mathrm{p}(\mathbb{T}^d).
\end{equation*}
Note that $\D(\A)$ can also be written as $\D(\A)=\big\{\y\in\H^{2}_\mathrm{p}(\mathbb{T}^d):\nabla\cdot\y=0\big\}$.  It should be noted that $\mathcal{P}$ and $\Delta$ commutes in a {torus} (\cite[Lemma 2.9]{JCR4}). For the Fourier expansion $\y(x)=\sum\limits_{k\in\mathbb{Z}^d} e^{2\pi i k\cdot x} \y_{k} ,$ we have by using Parseval's identity
\begin{align*}
	\|\y\|_{\H}^2=\sum\limits_{k\in\mathbb{Z}^d} |\y_{k}|^2 \  \text{and} \ \|\A\y\|_{\H}^2=(2\pi)^4\sum_{k\in\mathbb{Z}^d}|k|^{4}|\y_{k}|^2.
\end{align*}
Therefore, we have 
\begin{align*}
	\|\y\|_{\H^2_\mathrm{p}(\mathbb{T}^d)}^2=\sum_{k\in\mathbb{Z}^d}(1+|k|^{4})|\y_{k}|^2=\|\y\|_{\H}^2+\frac{1}{(2\pi)^4}\|\A\y\|_{\H}^2\leq\|\y\|_{\H}^2+\|\A\y\|_{\H}^2. 
\end{align*}
Moreover, by the definition of $\|\cdot\|_{\H^2_\mathrm{p}(\mathbb{T}^d)}$, we have $	\|\y\|_{\H^2_\mathrm{p}(\mathbb{T}^d)}^2\geq\|\y\|_{\H}^2+\|\A\y\|_{\H}^2$ and hence it is immediate that $\D(\I+\A)=\H^2_\mathrm{p}(\mathbb{T}^d)$.

\subsection{Bilinear operator}
Let us define the \textsl{trilinear form} $b(\cdot,\cdot,\cdot):\V\times\V\times\V\to\R$ by $$b(\y,\z,\w)=\int_{\mathbb{T}^d}(\y(x)\cdot\nabla)\z(x)\cdot\w(x)\d x=\sum_{i,j=1}^d\int_{\mathbb{T}^d}\y_i(x)\frac{\partial \z_j(x)}{\partial x_i}\w_j(x)\d x.$$ If $\y, \z$ are such that the linear map $b(\y, \z, \cdot) $ is continuous on $\V$, the corresponding element of $\V'$ is denoted by $\mathcal{B}(\y, \z)$. We also denote $\mathcal{B}(\y) = \mathcal{B}(\y, \y)=\mathcal{P}[(\y\cdot\nabla)\y]$.
An integration by parts yields 
\begin{equation}\label{b0}
	\left\{
	\begin{aligned}
		b(\y,\z,\w) &=  -b(\y,\w,\z),\ \text{ for all }\ \y,\z,\w\in \V,\\
		b(\y,\z,\z) &= 0,\ \text{ for all }\ \y,\z \in\V.
	\end{aligned}
	\right.\end{equation}


%

\subsection{Nonlinear operator}
Let us now consider the operator $\mathcal{C}(\y):=\mathcal{P}(|\y|^{r-1}\y)$. It is immediate that $\langle\mathcal{C}(\y),\y\rangle =\|\y\|_{\widetilde{\L}^{r+1}}^{r+1}$ and the map $\mathcal{C}(\cdot):\V\cap\widetilde{\L}^{r+1}\to\V'+\widetilde{\L}^{\frac{r+1}{r}}$ is Gateaux differentiable with Gateaux derivative 
	\begin{align}\label{29}
	\mathcal{C}'(\y)\z&=\left\{\begin{array}{cl}\mathcal{P}(\z),&\text{ for }r=1,\\ \left\{\begin{array}{cc}\mathcal{P}(|\y|^{r-1}\z)+(r-1)\mathcal{P}\left(\frac{\y}{|\y|^{3-r}}(\y\cdot\z)\right),&\text{ if }\y\neq \mathbf{0},\\\mathbf{0},&\text{ if }\y=\mathbf{0},\end{array}\right.&\text{ for } 1<r<3,\\ \mathcal{P}(|\y|^{r-1}\z)+(r-1)\mathcal{P}(\y|\y|^{r-3}(\y\cdot\z)), &\text{ for }r\geq 3,\end{array}\right.
\end{align}
for all $\y,\z\in\widetilde{\L}^{r+1}$. 
From  {\cite[pp. 104]{pL}} (also see \cite[Subsection 2.4]{MT2}), we have 
\begin{align}\label{2.23}
	\langle\mathcal{C}(\y)-\mathcal{C}(\z),\y-\z\rangle&\geq \frac{1}{2}\||\y|^{\frac{r-1}{2}}(\y-\z)\|_{\H}^2+\frac{1}{2}\||\z|^{\frac{r-1}{2}}(\y-\z)\|_{\H}^2\nonumber\\&\geq \frac{1}{2^{r-1}}\|\y-\z\|_{\wi\L^{r+1}}^{r+1}\geq 0,
\end{align}
for $r\geq 1$. This shows that $\mathcal{C}(\cdot)$ is monotone operator. 

\begin{remark}\label{e.2222}	
 In periodic domain (cf. \cite[Subsection 3.5]{MT2}), we have 
\begin{align}\label{P.11}
	\|\y\|_{\widetilde{\L}^{3(r+1)}}^{r+1}\leq C\int_{\mathbb{T}^d}|\nabla \y(x)|^2|\y(x)|^{r-1}\d x,
\end{align}
for $d=3$ and $r\geq 1$. Also, from \cite[Lemma 2.2]{LSM}, we obtain 
\begin{align}\label{P.22}
	\|\y\|_{\widetilde{\L}^{p(r+1)}}^{r+1}=\||\y|^{\frac{r+1}{2}}\|_{\wi\L^{2p}(\mathbb{T}^d)}^{2}\leq C \int_ {\mathbb{T}^d}|\nabla|\y|^{\frac{r+1}{2}}|^{2}\d x\ {\leq}\  C\int_{\mathbb{T}^d}|\nabla \y(x)|^2|\y(x)|^{r-1}\d x,
\end{align}
for $d=2$ and for all $p\in[2,\infty).$	
\end{remark}

\subsection{Main results} 
The main objective of this work is to establish the solvability results of the inclusion problem \eqref{1} and discuss their applications in the context of control problems. Since $\alpha$ is not playing a major role in this work, so we fix $\alpha=0$ in the rest of the paper.  Let us state the main results of this work for the problem \eqref{1} in an abstract framework (see \eqref{1p4} below). We will prove these results in the subsequent sections. Let us denote $\f=\mathcal{P}\g$ and $\Phi(\cdot)=\mathcal{P}\Psi(\cdot)$, where $\mathcal{P}$ is the Helmholtz-Hodge (or Leray) projection. The following assumption is imposed on $\Phi(\cdot)$ to achieve our goals which is similar to the work \cite{AIL}. 
\begin{hypothesis}\label{hyp1}
	Let $\Phi$ be a maximal monotone operator on $\H\times\H$ satisfying the following hypothesis:
	\begin{enumerate}
		\item[(H.1)] $\Phi = \partial\varphi$, where $\varphi: \H\to\overline{\R}:=\R\cup\{+\infty\}$ is a lower semicontinuous proper convex function.
		\item[(H.2)] $\boldsymbol{0}\in\mathrm{D}(\Phi)$.
		\item[(H.3)] {For some $\gamma\geq 0$ and $\varsigma\in(0,\frac{1}{\mu})$ (when  $d=2$ with  $r\in[1,\infty)$ and $d=3$  with  $r\in[1,5)$), and for some $\gamma\geq0$ and $\varsigma=0$ (when $d=3$ with $r\in[5,\infty)$),  the following inequality hold:
		\begin{align*}
			&(\mathrm{A}\y,\Phi_{\lambda}(\y))\geq -\gamma(1+\|\y\|_{\H}^{2})-\varsigma\|\Phi_{\lambda}(\y)\|_{\H}^{2},
		\end{align*}}
	\end{enumerate}
	for all $\lambda>0$ and $\y\in\mathrm{D}(\mathrm{A})$, where $ \Phi_{\lambda} = \frac{1}{\lambda}(\mathrm{I}-(\mathrm{I}+\lambda\Phi)^{-1}):\H\to\H$ is the Yosida approximation of $\Phi$. 
\end{hypothesis}
We observe from the above expression that  $\Phi_\lambda(\y)\in\Phi((\mathrm{I}+\lambda\Phi)^{-1})(\y)),$
for every $\y\in\H$ and $\lambda>0.$ Let us discuss one example which satisfies the Hypothesis \ref{hyp1}.
\begin{example}\label{exm}
{	Let $\mathcal{K}$ be a closed and convex subset of $\H$ satisfying
	\begin{align}\label{inv}
		(\I+\lambda\A)^{-1}\mathcal{K}\subset\mathcal{K}.
	\end{align}
 We consider the indicator function $\I_{\mathcal{K}}:\H\to\overline{\R}$ (\cite{VB2}) by 
	\begin{align*}
		\I_{\mathcal{K}}(\x)=
		\begin{cases}
			\boldsymbol{0},  &\text{if} \ \x\in\mathcal{K},\\
			+\infty, &\text{if} \  \x\notin\mathcal{K},
		\end{cases}
	\end{align*}
	whose subdifferential is given by 
	\begin{align*}
		\partial\I_{\mathcal{K}}(\x)=
		\begin{cases}
			\emptyset, &\text{if} \ \x\notin\mathcal{K},\\
			\{\boldsymbol{0}\}, &\text{if} \  \x\in \mathrm{int}(\mathcal{K}),\\
			N_{\mathcal{K}}(\x), &\text{if} \ \x\in\mathrm{bdy}(\mathcal{K}),
		\end{cases}
	\end{align*}
	where $\mathrm{int}(\mathcal{K})$ and $\mathrm{bdy}(\mathcal{K})$ denote the interior and boundary  of $\mathcal{K}$, respectively. Then a regularization of $\I_{\mathcal{K}}$ is given by (see \cite[Chapter 2, Theorem 2.2]{VB2}) $$(\I_{\mathcal{K}})_{\lambda}(\x)=\frac{1}{2\lambda}\|\x-\P_{\mathcal{K}}(\x)\|_{\H}^2,$$
	and its Gateaux derivative $$(\partial\I_{\mathcal{K}})_{\lambda}(\x)=\frac{1}{\lambda}(\x-\P_{\mathcal{K}}(\x)),$$ 
	where $\P_{\mathcal{K}}:\L^2(\mathbb{T}^d)\to\mathcal{K}$ is the projection operator of $\x$ onto $\mathcal{K}$ which is equal to the resolvent  $(\I+\lambda\partial\I_{\mathcal{K}})^{-1}.$ It implies that the above derivative is equal to the Yosida approximation of $\partial\I_{\mathcal{K}},$ that is, $$(\partial\I_{\mathcal{K}})_{\lambda}(\x)=\frac{1}{\lambda}(\x-(\I+\lambda\partial\I_{\mathcal{K}})^{-1}(\x)),  \  \text{ for all } \  \x\in\H.$$
	Then from \cite[Theorem 2.1, Chapter 2]{VB2}, we observe that the multi-valued operator $\Phi:=N_{\mathcal{K}}$ is a maximal monotone operator with $\boldsymbol{0}\in\D(\Phi)=\D(\partial\I_{\mathcal{K}})=\mathcal{K}.$ Also, from \cite[Chapter 1, pp. 30]{VB2} since $\A$ is single-vlaued maximal monotone operator in $\H$, thus from \cite[Chapter IV, Proposition 1.1, part(iv)]{VB1}, we have 
	\begin{align*}
		(\A\y,(\partial\I_{\mathcal{K}})_{\lambda}(\y))\geq0,  \  \text{ for all } \ \y\in\D(\A),\  \lambda>0.
	\end{align*}
	Thus the multi-valued operator $N_{\mathcal{K}}=\partial\I_{\mathcal{K}}$ satisfies all the assumptions (H1)-(H3)  of Hypothesis \ref{hyp1} (see Subsections \ref{TO}-\ref{St} for further examples).}
\end{example}

\begin{theorem}\label{thm1.2}
	Let $T>0$ and assume that $\Phi\subset\H\times\H$ satisfies Hypothesis \ref{hyp1}. Let $\y_0\in\D(\A)\cap\D(\Phi)$ and $\f \in \mathrm{W}^{1,1}(0,T;\H)$. For $d=2,3$ with $r\in[3,\infty)$ and $d=r=3$ with $2\beta\mu\geq1$, there exists a unique strong solution 
	\begin{align}\label{regu}
		\y\in{\W^{1,\infty}(0,T;\H)}\cap \C([0,T];\V)\cap\mathrm{L}^{\infty}(0,T;\D(\A))\cap\mathrm{L}^{r+1}(0,T;\wi\L^{3(r+1)})\cap\mathrm{W}^{1,2}(0,T;\V),
	\end{align}   
 such that in $\H$
	\begin{equation}\label{1p7}
		\left\{
		\begin{aligned}
			\frac{\d \y(t)}{\d t}+\mu\A\y(t)+\mathcal{B}(\y(t))+\beta\mathcal{C}(\y(t))+\Phi(\y(t))&\ni \f(t), \ \text{ a.e. } \ t\in[0,T], \\
			\y(0)&=\y_0. 
		\end{aligned}
		\right.
	\end{equation}
\end{theorem}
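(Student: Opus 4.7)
My plan is to follow the Yosida-approximation strategy that the authors foreshadow in the introduction. For each $\lambda>0$, I replace $\Phi$ by its Yosida regularization $\Phi_\lambda$ and consider the single-valued Cauchy problem
\begin{equation*}
\frac{\d\y_\lambda(t)}{\d t}+\mu\A\y_\lambda(t)+\mathcal{B}(\y_\lambda(t))+\beta\mathcal{C}(\y_\lambda(t))+\Phi_\lambda(\y_\lambda(t))=\f(t),\quad \y_\lambda(0)=\y_0.
\end{equation*}
Since $\Phi_\lambda$ is $\tfrac{1}{\lambda}$-Lipschitz on $\H$ and the $m$-accretivity of $\mu\A+\mathcal{B}(\cdot)+\beta\mathcal{C}(\cdot)+\kappa\I$ on $\H$ (with $\D(\A)$ as domain) has been obtained for the present range of $(d,r)$ via the perturbation theorem in Proposition~\ref{prop33}, the operator $\mu\A+\mathcal{B}+\beta\mathcal{C}+\Phi_\lambda+\kappa\I$ remains $m$-accretive on $\H$. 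Standard nonlinear semigroup theory (Crandall--Pazy / Barbu, Chapter~IV) then yields a unique strong solution $\y_\lambda$ on $[0,T]$ with $\y_\lambda\in\W^{1,\infty}(0,T;\H)\cap\mathrm{L}^\infty(0,T;\D(\A))$ and with right derivative $\frac{\d^+\y_\lambda}{\d t}$ existing everywhere.

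The core of the proof will be to derive $\lambda$-uniform estimates. First I would test with $\y_\lambda$ to obtain the standard energy identity; using $b(\y_\lambda,\y_\lambda,\y_\lambda)=0$, $\langle\mathcal{C}(\y_\lambda),\y_\lambda\rangle=\|\y_\lambda\|_{\widetilde{\L}^{r+1}}^{r+1}$, and $(\Phi_\lambda(\y_\lambda),\y_\lambda)\geq \varphi_\lambda(\y_\lambda)-\varphi_\lambda(\boldsymbol{0})\geq -C$ coming from (H.1)--(H.2), this bounds $\y_\lambda$ in $\mathrm{L}^\infty(0,T;\H)\cap\mathrm{L}^2(0,T;\V)\cap\mathrm{L}^{r+1}(0,T;\widetilde{\L}^{r+1})$. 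Next I would test with $\A\y_\lambda$; here the identity \eqref{3} combined with the Sobolev embedding \eqref{P.11} makes the damping term $\beta|\y|^{r-1}\y$ absorb the Navier--Stokes nonlinearity in the supercritical regime (and marginally at $r=3$ when $2\beta\mu\geq 1$), exactly as the introduction indicates. Hypothesis~(H.3) is used precisely at this step to control the sign-indefinite term $(\A\y_\lambda,\Phi_\lambda(\y_\lambda))$; Gr\"onwall then gives $\y_\lambda\in\mathrm{L}^\infty(0,T;\V)\cap\mathrm{L}^2(0,T;\D(\A))\cap\mathrm{L}^{r+1}(0,T;\widetilde{\L}^{3(r+1)})$ uniformly.

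The main obstacle, and the step I expect to consume most of the technical work, is obtaining uniform $\mathrm{L}^\infty(0,T;\H)$ bounds on $\frac{\d^+\y_\lambda}{\d t}$ and uniform $\mathrm{L}^2(0,T;\V)$ bounds on $\frac{\d\y_\lambda}{\d t}$, which are needed to handle $\int_0^T\|\Phi_\lambda(\y_\lambda)\|_{\H}^2\d t$ and to have enough compactness. As the authors warn in the introduction, $\Phi_\lambda(\y_\lambda(\cdot))$ is not Gateaux differentiable in time, so I cannot simply differentiate the equation. Instead I would use the classical finite-difference trick: write the equation at $t+h$ and $t$, subtract, test with $\y_\lambda(t+h)-\y_\lambda(t)$, and exploit the monotonicity of $\Phi_\lambda$ to throw away its contribution. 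The terms $\mu\A$ and $\beta\mathcal{C}$ contribute nonnegatively by monotonicity; $\mathcal{B}$ is handled by standard estimates $|b(\y,\y,\z)|\leq C\|\y\|_{\V}^2\|\z\|_{\V}$ absorbed by the viscous/damping terms in the supercritical range; and the forcing contribution is controlled by $\f\in\W^{1,1}(0,T;\H)$. Dividing by $h^2$, passing $h\downarrow 0$ and applying Gr\"onwall yields the required bounds, whence $\|\Phi_\lambda(\y_\lambda)\|_{\mathrm{L}^2(0,T;\H)}$ is uniformly bounded by the equation itself.

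With these $\lambda$-uniform estimates in hand, Banach--Alaoglu extracts weak-$*$ limits $\y$, $\chi$, $\eta$ of $\y_\lambda$, $\Phi_\lambda(\y_\lambda)$ and the various nonlinear terms in the appropriate spaces. The Aubin--Lions lemma (together with the $\mathrm{W}^{1,\infty}(0,T;\H)\cap\mathrm{L}^\infty(0,T;\D(\A))$ bound) gives strong convergence $\y_\lambda\to\y$ in $\mathrm{L}^2(0,T;\V)$ and a.e.\ in $\mathbb{T}^d\times(0,T)$, which is enough to identify the weak limits of $\mathcal{B}(\y_\lambda)$ and $\mathcal{C}(\y_\lambda)$ with $\mathcal{B}(\y)$ and $\mathcal{C}(\y)$. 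The identification $\chi\in\Phi(\y)$ follows from the maximal monotonicity of $\Phi$ by the standard Minty--Br\'ezis argument: $\Phi_\lambda(\y_\lambda)=\Phi((\I+\lambda\Phi)^{-1}\y_\lambda)$ and $(\I+\lambda\Phi)^{-1}\y_\lambda\to\y$ strongly in $\mathrm{L}^2(0,T;\H)$, so passing to the limit in $\int_0^T(\Phi_\lambda(\y_\lambda),\y_\lambda-(\I+\lambda\Phi)^{-1}\y_\lambda+\z)\d t$ against any admissible $\z$ closes the graph. This gives \eqref{1p7} together with all the regularity listed in \eqref{regu}. Uniqueness is standard: for two solutions $\y^{(1)},\y^{(2)}$, test the difference equation with $\y^{(1)}-\y^{(2)}$, use the monotonicity inequalities \eqref{2.23} for $\mathcal{C}$ and for $\Phi$, control $\mathcal{B}(\y^{(1)})-\mathcal{B}(\y^{(2)})$ by $\|\y^{(1)}-\y^{(2)}\|_\V\|\y^{(i)}\|_\V\|\y^{(1)}-\y^{(2)}\|_\H$ type estimates (absorbed in the supercritical range using $\||\y|^{\frac{r-1}{2}}(\y^{(1)}-\y^{(2)})\|_{\H}^2$ from \eqref{2.23}, exactly the mechanism alluded to for $r\geq 3$ with $2\beta\mu\geq 1$ when $r=3$), and conclude by Gr\"onwall.
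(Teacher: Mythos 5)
Your proposal is correct and follows essentially the same route as the paper: Yosida regularization of $\Phi$, quasi-$m$-accretivity of $\mu\A+\mathcal{B}(\cdot)+\beta\mathcal{C}(\cdot)+\kappa\I$ as in Proposition \ref{prop33}, the finite-difference-in-time trick (exploiting monotonicity of $\Phi_\lambda$) to bound $\frac{\d^+\y_\lambda}{\d t}$ and hence $\int_0^T\|\Phi_\lambda(\y_\lambda(t))\|_{\H}^2\d t$ via (H.3), and a Banach--Alaoglu/Aubin--Lions/Minty passage to the limit, with uniqueness by monotonicity and Gr\"onwall. The only point you gloss over is the $\mathrm{L}^{\infty}(0,T;\D(\A))$ regularity in \eqref{regu}, which your stated uniform bounds (only $\mathrm{L}^2$ in time for $\A\y_\lambda$) do not directly deliver; the paper obtains it by also invoking the abstract $m$-accretive existence theory for the inclusion itself (Proposition \ref{thm1.1}, built on Proposition \ref{prop3.3}) and identifying the two solutions through uniqueness, though one could equally bootstrap it from your uniform $\W^{1,\infty}(0,T;\H)$ bound together with the stationary estimate \eqref{3.3939}.
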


\begin{remark}
	1.) Theorem \ref{thm1.2} holds true if one replaces $(1+\|\y\|_{\H}^{2})$ by $(1+\|\y\|_{\V}^{2})$ in $\mathrm{(H.3)}$. \\
	2.) Using  condition $\mathrm{(H.1)}$, the system \eqref{1p7} can be considered as \textsl{CBF equations perturbed by a subdifferential.}
\end{remark}

\section{Abstract results}\label{sec3}
In this section, we first prove the $m$-accretivity of a single-valued and multi-valued operator. Then, we show abstract result (Proposition \ref{thm1.1}) by using the abstract theory available in \cite{VB1,VB2}.
\begin{proposition}\label{prop33}
For $d=2,3$ with $r>3$, define the operator $\mathcal{G}(\cdot):\mathrm{D}(\mathcal{G})\to\H$ by
	\begin{align*}
		\mathcal{G}(\cdot)=\mu\A+\B(\cdot)+\beta\mathcal{C}(\cdot), 
	\end{align*}
where $\mathrm{D}(\mathcal{G})=\{\y\in\V\cap\wi\L^{r+1}:\A\y\in\H\}.$ Then  $\mathcal{G}+\kappa\mathrm{I}$ is $m$-accretive in $\H\times\H$ for some $\kappa\geq\varrho,$ where 
\begin{align}
	\label{215}\varrho=\frac{r-3}{2\mu(r-1)}\left(\frac{2}{\beta\mu(r-1)}\right)^{\frac{2}{r-3}}.
\end{align}
\end{proposition}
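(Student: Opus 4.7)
The strategy is to verify the two defining conditions for $m$-accretivity in $\H$: (i) accretivity of $\mathcal{G}+\kappa\I$, which in the Hilbert setting amounts to monotonicity, and (ii) the range condition $R(\I+\mathcal{G}+\kappa\I)=\H$. The hint in the paper about the splitting $\mathcal{F}^1+\mathcal{F}^2$ together with Barbu's perturbation theorem strongly suggests that the two conditions should be handled via slightly different tools.

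For accretivity, given $\y,\z\in\D(\mathcal{G})$ I would expand
\begin{align*}
(\mathcal{G}(\y)-\mathcal{G}(\z),\y-\z) = \mu\|\nabla(\y-\z)\|_{\H}^{2} + \langle\mathcal{B}(\y)-\mathcal{B}(\z),\y-\z\rangle + \beta\langle\mathcal{C}(\y)-\mathcal{C}(\z),\y-\z\rangle.
\end{align*}
The last term is nonnegative, and in fact \eqref{2.23} delivers the stronger bound $\frac{\beta}{2}\||\y|^{(r-1)/2}(\y-\z)\|_{\H}^{2}$. The only obstruction is the bilinear form: using the cancellation $b(\z,\y-\z,\y-\z)=0$ from \eqref{b0}, I would rewrite
\begin{align*}
\langle\mathcal{B}(\y)-\mathcal{B}(\z),\y-\z\rangle = b(\y-\z,\y,\y-\z) = -b(\y-\z,\y-\z,\y),
\end{align*}
bound the modulus by $\int|\y-\z||\nabla(\y-\z)||\y|\,\d x$, and then chain two Young's inequalities. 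The first Young (exponents $2,2$, parameter $\mu$) peels off the gradient into the Stokes dissipation, leaving $\frac{1}{4\mu}\int|\y-\z|^{2}|\y|^{2}\,\d x$. The second Young (with exponents $\frac{r-1}{2},\frac{r-1}{r-3}$, applied to the pointwise inequality $|\y|^{2}\leq\epsilon|\y|^{r-1}+C(\epsilon)$, valid exactly because $r>3$) lets me absorb the $|\y|^{r-1}|\y-\z|^{2}$ piece into the Forchheimer monotonicity term by choosing $\epsilon$ so that $\frac{\epsilon}{2\mu(r-1)}\leq\frac{\beta}{2}$, i.e.\ the sharp value $\epsilon=\beta\mu(r-1)$. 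The remaining residue is a pure $\|\y-\z\|_{\H}^{2}$ term whose coefficient, after optimization, is precisely $\varrho$ of \eqref{215}; it is dominated by $\kappa\|\y-\z\|_{\H}^{2}$ provided $\kappa\geq\varrho$. This establishes accretivity.

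For the range condition, I would follow the perturbation-theorem approach. Write $\mathcal{G}+\kappa\I=\mathcal{F}^{1}+\mathcal{F}^{2}$ with $\mathcal{F}^{1}=\mu(1-\delta_{1})\A+\beta(1-\delta_{2})\mathcal{C}$ and $\mathcal{F}^{2}=\mu\delta_{1}\A+\mathcal{B}+\beta\delta_{2}\mathcal{C}+\kappa\I$ for fixed small $\delta_{1},\delta_{2}\in(0,1)$. The operator $\mathcal{F}^{1}$ is $m$-accretive in $\H$: it is the sum of the $m$-accretive Stokes operator $\A$ and the monotone, demicontinuous, coercive operator $\beta(1-\delta_{2})\mathcal{C}(\cdot)$, and the range equation $\y+\lambda\mathcal{F}^{1}(\y)=\f$ for $\f\in\H$ can be solved by a standard Galerkin/Minty argument on $\V\cap\wi\L^{r+1}$, with $\A\y\in\H$ coming afterwards from elliptic regularity applied to the resulting equation. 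For $\mathcal{F}^{2}$, the same accretivity computation as above (now using only the fractions $\mu\delta_{1}$ and $\beta\delta_{2}$) shows that $\kappa\geq\varrho/(\delta_{1}\delta_{2}^{2/(r-3)})$ (or similar; in any case $\kappa\geq\varrho$ after rechoosing $\delta_{i}$) yields accretivity, and $m$-accretivity of $\mathcal{F}^{2}$ follows from a perturbation of $\mathcal{F}^{1}$ by the bounded (on bounded sets of $\D(\A)\cap\wi\L^{r+1}$) bilinear term. Barbu's perturbation theorem (Theorem 3.5, Chapter II of \cite{VB1}) then yields that the sum $\mathcal{F}^{1}+\mathcal{F}^{2}=\mathcal{G}+\kappa\I$ is $m$-accretive, with domain $\D(\mathcal{G})$ as stated.

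The main obstacle is the bilinear term $\mathcal{B}$, which is neither monotone nor globally Lipschitz on the natural spaces. The proof must extract enough dissipation from the $\A$ term and enough monotonicity from the $\mathcal{C}$ term to swallow $\mathcal{B}$; this is precisely where the restriction $r>3$ bites, since the Young's inequality $|\y|^{2}\leq\epsilon|\y|^{r-1}+C(\epsilon)$ is available only in that regime (degenerating at $r=3$ and failing for $r<3$, which is exactly why the $\wi\L^{4}$-quantization of Appendix \ref{Appen.} is needed in the complementary case). A secondary technical care-point is to verify the precise form of the constant $\varrho$ in \eqref{215} by optimizing the Young parameters, and to ensure that the choice of $\delta_{1},\delta_{2}$ in the splitting is compatible with the single constant $\kappa\geq\varrho$ in the final statement.
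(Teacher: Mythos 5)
Your monotonicity argument is essentially the paper's Step I: the same decomposition of the trilinear term via \eqref{b0}, the same two-stage Young's inequality exploiting $r>3$, absorption of the resulting $|\cdot|^{r-1}|\y-\z|^2$ piece into the Forchheimer monotonicity \eqref{2.23}, and the same residual constant $\varrho$ of \eqref{215}; whether one runs the second Young's inequality pointwise or through the integral H\"older interpolation (as in \eqref{2..29}) is immaterial. That half is correct.

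The range-condition half has a genuine gap. The perturbation theorem \cite[Chapter II, Theorem 3.5]{VB1} does not assert that a sum of an $m$-accretive operator and an accretive one is $m$-accretive; its essential hypothesis is a relative bound $\|\mathcal{F}^{2}(\y)\|_{\H}\leq\rho\|\mathcal{F}^{1}(\y)\|_{\H}+C\|\y\|_{\H}$ with $\rho<1$ on $\D(\mathcal{F}^{1})$, and producing this bound is the entire point of the splitting and the bulk of the paper's Step IV. You neither state nor prove it; instead you claim $\mathcal{F}^{2}$ is itself $m$-accretive ``by perturbation of $\mathcal{F}^{1}$ by the bounded bilinear term,'' which is circular and in any case not what the theorem requires. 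The missing chain is: test $\mathcal{F}^{1}(\y)$ against $\A\y$ and against $\mathcal{C}(\y)$ and use the torus identity \eqref{3} (whose sign $(\mathcal{C}(\y),\A\y)\geq0$ is also what makes your ``elliptic regularity'' claim for $\D(\mathcal{F}^{1})=\D(\A)$ work) to get $\|\A\y\|_{\H}\leq\frac{1}{\mu(1-\delta_1)}\|\mathcal{F}^{1}(\y)\|_{\H}$ and $\|\mathcal{C}(\y)\|_{\H}\leq\frac{1}{\beta(1-\delta_2)}\|\mathcal{F}^{1}(\y)\|_{\H}$; then interpolate $\|\mathcal{B}(\y)\|_{\H}^{2}\leq\||\y|^{\frac{r-1}{2}}\nabla\y\|_{\H}^{\frac{4}{r-1}}\|\nabla\y\|_{\H}^{\frac{2(r-3)}{r-1}}$ (again using $r>3$ and \eqref{3}) to reach $\|\mathcal{B}(\y)\|_{\H}\leq\frac{\delta_1}{1-\delta_1}\|\mathcal{F}^{1}(\y)\|_{\H}+C\|\y\|_{\H}$; finally choose $\delta_1,\delta_2$ with $\frac{2\delta_1}{1-\delta_1}+\frac{\delta_2}{1-\delta_2}<1$. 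Without these estimates the appeal to the perturbation theorem is unjustified and the identification of the domain $\D(\mathcal{G})$ claimed in the statement is not obtained. (Your Galerkin/Minty treatment of $\mathcal{F}^{1}$ is an acceptable substitute for the paper's demicontinuity-plus-coercivity preliminary, but not for the relative bound.)
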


\begin{proof}
 We shall first show that $\mathcal{G}+\kappa\mathrm{I}$ is a monotone operator for $\kappa\geq\varrho>0$. Then we will show that $\mathcal{G}+\kappa\mathrm{I}$ is coercive and demicontinuous, which imply  the $m$-accretivity of the operator $\mathcal{G}+\kappa\I$.  The proof is divided into {the}  following four steps:
\vskip 2mm
\noindent
\textbf{Step I:} \textsl{$\mathcal{G}+\kappa\mathrm{I}$ is monotone for some $\kappa>0$.} 	We estimate $	\langle\A\y-\A\z,\y-\z\rangle $ by	using an integration by parts as
\begin{align}\label{ae}
	\langle\A\y-\A\z,\y-\z\rangle =\|\nabla(\y-\z)\|^2_{\H}.
\end{align}
Note that $\langle\mathcal{B}(\y,\y-\z),\y-\z\rangle=0$ which implies along with H\"older's and Young's inequalities that
\begin{align}\label{2..28}
	|\langle \mathcal{B}(\y)-\mathcal{B}(\z),\y-\z\rangle| &=|\langle\mathcal{B}(\y-\z,\y-\z),\z\rangle|\leq \frac{\mu }{2}\|\nabla(\y-\z)\|_{\H}^2+\frac{1}{2\mu }\|\z(\y-\z)\|_{\H}^2.
\end{align} 
We take the term $\|\z(\y-\z)\|_{\H}^2$ from \eqref{2..28} and use H\"older's and Young's inequalities to estimate it as (see \cite{KWH} also)
\begin{align}\label{2..29}
	\int_{\mathbb{T}^d}|\z(x)|^2|\y(x)-\z(x)|^2\d x &=\int_{\mathbb{T}^d}|\z(x)|^2|\y(x)-\z(x)|^{\frac{4}{r-1}}|\y(x)-\z(x)|^{\frac{2(r-3)}{r-1}}\d x\nonumber\\&\leq\left(\int_{\mathbb{T}^d}|\z(x)|^{r-1}|\y(x)-\z(x)|^2\d x\right)^{\frac{2}{r-1}}\left(\int_{\mathbb{T}^d}|\y(x)-\z(x)|^2\d x \right)^{\frac{r-3}{r-1}}\nonumber\\&\leq{\beta\mu} \||\z|^{\frac{r-1}{2}}(\y-\z)\|_{\H}^2+\frac{r-3}{r-1}\left[\frac{2}{\beta\mu (r-1)}\right]^{\frac{2}{r-3}}\|\y-\z\|_{\H}^2,
\end{align}
for $r>3$. Using \eqref{2..29} in \eqref{2..28}, we find 
\begin{align}\label{2.30}
	|\langle\mathcal{B}(\y)-\mathcal{B}(\z),\y-\z\rangle|\leq\frac{\mu}{2} \|\nabla(\y-\z)\|_{\H}^2+\frac{\beta}{2}\||\z|^{\frac{r-1}{2}}(\y-\z)\|_{\H}^2+\varrho\|\y-\z\|_{\H}^2,
\end{align}
where $\varrho=\frac{r-3}{2\mu(r-1)}\left[\frac{2}{\beta\mu (r-1)}\right]^{\frac{2}{r-3}}.$ From \eqref{2.23}, we easily have 
\begin{align}\label{2.27}
	\beta	\langle\mathcal{C}(\y)-\mathcal{C}(\z),\y-\z\rangle \geq \frac{\beta}{2}\||\z|^{\frac{r-1}{2}}(\y-\z)\|_{\H}^2. 
\end{align}
Combining \eqref{ae} and \eqref{2.30}-\eqref{2.27}, we conclude that 
\begin{align}
	\langle(\mathcal{G}+\kappa\I)(\y)-(\mathcal{G}+\kappa\I)(\z),\y-\z\rangle&\geq\frac{\mu}{2} \|\nabla(\y-\z)\|_{\H}^2+(\kappa-\varrho)\|\y-\z\|_{\H}^2\geq\frac{\mu}{2} \|\nabla(\y-\z)\|_{\H}^2,
\end{align}
for $\kappa\geq\varrho$ and $r>3$. Thus $\mathcal{G}+\kappa\I$ is monotone.
\vskip 2mm
\noindent
\textbf{Step II:} \textsl{$\mathcal{G}+\kappa\I$ is demicontinuous.} Let us take a sequence $\y^n\to \y$ in $\V\cap\widetilde{\L}^{r+1},$ so that $\|\y^n-\y\|_{\V}+\|\y^n-\y\|_{\wi\L^{r+1}}\to 0$ as $n\to\infty$. For any $\z\in\V\cap\widetilde{\L}^{r+1}$, we consider 
\begin{align}\label{214}
	&\langle(\mathcal{F}+\kappa\I)(\y^n)-(\mathcal{F}+\kappa\I)(\y),\z\rangle\nonumber\\ &=\mu \langle \A(\y^n)-\A(\y),\z\rangle+\langle\B(\y^n)-\mathcal{B}(\y),\z\rangle-\beta\langle \mathcal{C}(\y^n)-\mathcal{C}(\y),\z\rangle+\kappa(\y_{n}-\y,\z).
\end{align} 
Note that 
\begin{align*}
		|\mu\langle \mathrm{A}(\y_{n}-\y),\z\rangle+\kappa(\y_{n}-\y,\z)|\leq\mu\|\nabla(\y_n-\y)\|_{\H}\|\nabla\z\|_{\H}+\kappa\|\y_n-\y\|_{\H}\|\z\|_{\H}\to 0,
\end{align*}
 as  $n\to\infty$, since $\y^n\to \y$ strongly in $\V\cap\wi\L^{r+1}$. We estimate the term $	|\langle\B(\y^n)-\mathcal{B}(\y),\z\rangle|$ by using the H\"older's and interpolation inequalities 
\begin{align*}
	|\langle\B(\y^n)-\B(\y),\z\rangle|&=|\langle\B(\y^n,\y^n-\y),\z\rangle+\langle\B(\y^n-\y,\y),\z\rangle|
	\nonumber\\&\leq\left(\|\y^n\|_{\widetilde{\L}^{\frac{2(r+1)}{r-1}}}+\|\y\|_{\widetilde{\L}^{\frac{2(r+1)}{r-1}}}\right)\|\y^n-\y\|_{\widetilde{\L}^{r+1}}\|\nabla\z\|_{\H}\nonumber\\&\leq \left(\|\y^n\|_{\H}^{\frac{r-3}{r-1}}\|\y^n\|_{\widetilde{\L}^{r+1}}^{\frac{2}{r-1}}+\|\y\|_{\H}^{\frac{r-3}{r-1}}\|\y\|_{\widetilde{\L}^{r+1}}^{\frac{2}{r-1}}\right)\|\y^n-\y\|_{\widetilde{\L}^{r+1}}\|\nabla\z\|_{\H}\nonumber\\& \to 0, \ \text{ as } \ n\to\infty, 
\end{align*}
since $\y^n\to\y$ strongly in $\V\cap\wi\L^{r+1}$ and $\y^n,\y\in\V\cap\wi\L^{r+1}$. We estimate the term $|\langle \mathcal{C}(\y^n)-\mathcal{C}(\y),\z\rangle|$ using the Taylor's formula (\cite[Theorem 7.9.1]{PGC}) as 
\begin{align*}
	|\langle \mathcal{C}(\y^n)-\mathcal{C}(\y),\z\rangle|&\leq \sup_{0<\theta<1} r\|(\y^n-\y)|\theta\y^n+(1-\theta)\y|^{r-1}\|_{\widetilde{\L}^{\frac{r+1}{r}}}\|\z\|_{\widetilde{\L}^{r+1}}\nonumber\\&\leq r\|\y^n-\y\|_{\widetilde{\L}^{r+1}}\left(\|\y^n\|_{\widetilde{\L}^{r+1}}+\|\y\|_{\widetilde{\L}^{r+1}}\right)^{r-1}\|\z\|_{\widetilde{\L}^{r+1}}\to 0 \text{ as } n\to\infty,
\end{align*}
since $\y_n\to\y$ strongly in $\V\cap{\widetilde{\L}^{r+1}}$ and $\y_n, \y\in\V\cap{\widetilde{\L}^{r+1}}$. From the above convergences, it is immediate that $\langle(\mathcal{G}+\kappa\I)(\y^n)-(\mathcal{G}+\kappa\I)(\y),\z\rangle \to 0$, for all $\z\in \V\cap\widetilde{\L}^{r+1}$.
Therefore, the operator $\mathcal{G}+\kappa\I:\V\cap\widetilde{\L}^{r+1}\to \V'+\widetilde{\L}^{\frac{r+1}{r}}$ is demicontinuous and hence it is hemicontinuous. 
\vskip 2mm
\noindent
\textbf{Step III:} \textsl{$\mathcal{G}+\kappa\I$ is coercive.} We consider
\begin{align*}
	\frac{\langle(\mathcal{G}+\kappa\I)(\y),\y\rangle}{\|\y\|_{\V\cap\wi\L^{r+1}}}&=\frac{\mu\|\nabla\y\|_{\H}^{2}+\beta\|\y\|_{\wi\L^{r+1}}^{r+1}+\kappa\|\y\|_{\H}^{2}}{\sqrt{\|\y\|_{\V}^{2}+\|\y\|_{\wi\L^{r+1}}^2}}\nonumber\geq\frac{\min\{\mu,\beta,\kappa\}\left(\|\y\|_{\V}^{2}+\|\y\|^2_{\wi\L^{r+1}}-1\right)}{\sqrt{\|\y\|_{\V}^{2}+\|\y\|_{\wi\L^{r+1}}^2}},
\end{align*}
where we have used the fact that $x^{2}\leq x^{r+1}+1,$ for $x\geq0$ and $r\geq1.$ Thus, we have 
\begin{align*}
	\lim\limits_{\|\y\|_{\V\cap\wi\L^{r+1}}\to\infty} \frac{\langle(\mathcal{G}+\kappa\I)(\y),\y\rangle}{\|\y\|_{\V\cap\wi\L^{r+1}}}=\infty, 
\end{align*}
and it  shows that the operator $\mathcal{G}+\kappa\I$ is coercive.
\vskip 2mm
\noindent
\textbf{Step IV:} \textsl{$\mathcal{F}(\cdot):=\mathcal{G}(\cdot)+\kappa\I$ is $m$-accretive in $\H\times\H$}. Let us define an operator 
$$\mathcal{F}(\y):=\mu\A\y+\mathcal{B}(\y)+\beta\mathcal{C}(\y)+\kappa\y,$$ where $\D(\mathcal{F})=\{\y\in\V\cap\wi\L^{r+1}:\mu\A\y+\mathcal{B}(\y)+\beta\mathcal{C}(\y)\in\H\}.$ Note that the space $\V\cap\widetilde{\L}^{r+1}$ is reflexive. Since $\mathcal{G}+\kappa\I$ is monotone, hemicontinuous and coercive from $\V\cap\widetilde{\L}^{r+1}$ to $\V'+\widetilde{\L}^{\frac{r+1}{r}}$, then by an application of \cite[Example 2.3.7]{OPHB}, we obtain that  $\mathcal{G}+\kappa\I$ is maximal monotone in $\H$ with domain $\mathrm{D}(\mathcal{F})\supseteq\mathrm{D}(\mathrm{A}).$ In fact, we shall prove that $\mathcal{F}$ is $m$-accretive for $\kappa$ sufficiently large with $\mathrm{D}(\mathcal{F})=\mathrm{D}(\mathrm{A}).$ Let us consider the operators for some $\delta_1, \delta_2\in(0,1)$ as
\begin{align}
	\mathcal{F}^{1}(\cdot) &= \mu(1-\delta_1)\mathrm{A}+\beta(1-\delta_2)\mathcal{C}(\cdot),\label{3.3.2}\\
	\mathcal{F}^{2}(\cdot) &= \mu\delta_1\mathrm{A}+\mathcal{B}(\cdot)+\beta\delta_2\mathcal{C}(\cdot)+\kappa\mathrm{I},\label{3.3.33}
\end{align}
where $\D(\mathcal{F}^{1})=\{\y\in\V\cap\wi\L^{r+1}:\mathcal{F}^1(\cdot)\in\H\}$ and $\D(\mathcal{F}^{2}) = \{\y\in\V\cap\wi\L^{r+1}:\mathcal{F}^{2}(\cdot)\in\H\}.$ 
Taking the inner product with $\y$ in \eqref{3.3.2}, we obtain
\begin{align*}
	\mu(1-\delta_1)\|\nabla\y\|_{\H}^2+\beta(1-\delta_2)\|\y\|_{\wi\L^{r+1}}^{r+1}\leq(\mathcal{F}^{1}(\y),\y)\leq \|\mathcal{F}^{1}(\y)\|_{\H}\|\y\|_{\H},
\end{align*}
so that 
\begin{align}\label{3.3.5}
	\|\nabla\y\|_{\H}^2\leq\frac{1}{\mu(1-\delta_1)}\|\mathcal{F}^{1}(\y)\|_{\H}\|\y\|_{\H}.
\end{align}
	Taking the inner product with $\A\y$ in \eqref{3.3.2} and using equality \eqref{3}, we get
\begin{align*}
	\mu(1-\delta_1)\|\A\y\|_{\H}^{2}+\beta(1-\delta_2) \left[\||\y|^{\frac{r-1}{2}}\nabla\y\|_{\H}^2+\frac{4(r-1)}{(r+1)^2}\||\nabla|\y|^{\frac{r+1}{2}}|\|_{\H}^2\right]=(\mathcal{F}^{1}(\y),\A\y). 
\end{align*}
Therefore, we have 
\begin{align}\label{3.147}
	\|\A\y\|_{\H}\leq\frac{1}{\mu(1-\delta_1)}\|\mathcal{F}^{1}(\y)\|_{\H}\ \text{ which implies }\ \D(\mathcal{F}^{1})\subseteq\D(\A). 
\end{align}
Moreover, by using Sobolev embedding $\H^2_{\mathrm{p}}(\mathbb{T}^d)\hookrightarrow\wi\L^{2r}$, we infer  
\begin{align*}
	\|\mathcal{F}^{1}(\y)\|_{\H}&\leq\mu(1-\delta_1)\|\mathrm{A}\y\|_{\H} +C\beta(1-\delta_2)\|\y\|_{\wi\L^{2r}}^r\nonumber\\&\leq\mu(1-\delta_1)\|\mathrm{A}\y\|_{\H}+C \beta(1-\delta_2)\|\y\|_{\H^2_{\mathrm{p}}(\mathbb{T}^d)}^r\nonumber\\&\leq\mu(1-\delta_1) \|\mathrm{A}\y\|_{\H}+C\beta(1-\delta_2)\|\A\y\|_{\H}^r+C\beta(1-\delta_2)\|\y\|_{\H}^r, 
\end{align*}
which gives $\D(\mathcal{F}^{1})\supseteq\D(\A)$ and therefore $\D(\A)=\D(\mathcal{F}^1).$
Similarly, taking the inner product with $\mathcal{C}(\y)$ in \eqref{3.3.2}, we find
\begin{align}\label{3.149}
	\|\mathcal{C}(\y)\|_{\H}\leq\frac{1}{\beta(1-\delta_2)}\|\mathcal{F}^{1}(\y)\|_{\H}.
\end{align}
For $r>3$, similar to \eqref{2..29}, we estimate $\|\mathcal{B}(\y)\|_{\H}$ using H\"older's inequality as follows:
\begin{align}\label{3.3.4}
	\|\mathcal{B}(\y)\|_{\H}^2 \leq
\||\y|^\frac{r-1}{2}\nabla\y\|_{\H}^{\frac{4}{r-1}}\|\nabla\y\|_{\H}^\frac{2(r-3)}{r-1}.  
\end{align}
Note that $(\mathcal{C}(\y),\A\y)=\int_{\mathbb{T}^{d}}(-\Delta \boldsymbol{y}(x))\cdot|\boldsymbol{y}(x)|^{r-1}\boldsymbol{y}(x)\d x$. Using the estimate \eqref{3.3.5} and the equality \eqref{3} in \eqref{3.3.4}, we find
\begin{align*}
    \|\mathcal{B}(\y)\|_{\H}^2\leq\left[(\mathcal{C}(\y),\A\y)\right]^\frac{2}{r-1}\left[\frac{1}{\mu(1-\delta_1)}\|\mathcal{F}^{1}(\y)\|_{\H}\|\y\|_{\H}\right]^\frac{r-3}{r-1}.
\end{align*}
Therefore, we estimate $\|\mathcal{B}(\y)\|_{\H}$ as 
\begin{align}\label{3p36}
	 \|\mathcal{B}(\y)\|_{\H}\leq\|\mathcal{C}(\y)\|_{\H}^\frac{1}{r-1}\|\A\y\|_{\H}^\frac{1}{r-1}\left[\frac{1}{\mu(1-\delta_1)}\|\mathcal{F}^{1}(\y)\|_{\H}\|\y\|_{\H}\right]^\frac{r-3}{2(r-1)}.
\end{align}
Using the estimates \eqref{3.147}-\eqref{3.149} in \eqref{3p36},  then using Young's inequality, we get
\begin{align}\label{3.3.6}
 \|\mathcal{B}(\y)\|_{\H}&\leq\left[\frac{\|\mathcal{F}^1(\y)\|_{\H}^2}{\beta\mu(1-\delta_1)(1-\delta_2)}\right]^\frac{1}{r-1}\left[\frac{\|\mathcal{F}^{1}(\y)\|_{\H}\|\y\|_{\H}}{\mu(1-\delta_1)}\right]^\frac{r-3}{2(r-1)}\nonumber\\&=
 \frac{1}{\sqrt{\mu(1-\delta_1)}}\left[\frac{1}{\beta(1-\delta_2)}\right]^\frac{1}{r-1}\|\mathcal{F}^1(\y)\|_{\H}^\frac{r+1}{2(r-1)}\|\y\|_{\H}^\frac{r-3}{2(r-1)}\nonumber\\&\leq \frac{\delta_1}{1-\delta_1}\|\mathcal{F}^{1}(\y)\|_{\H}+C_{\delta_1,\delta_2,\mu,\beta}\|\y\|_{\H}, 
\end{align}
where  $C_{\delta_1,\delta_2,\mu,\beta}=\frac{r-3}{2(r-1)}\left(\frac{1-\delta_1}{\mu^{\frac{r-1}{2}}\beta(1-\delta_2)}\right)^{\frac{2}{r-3}}\left(\frac{r+1}{2\delta_1(r-1)}\right)^{\frac{r+1}{r-3}}$.
Now using the estimates \eqref{3.147}-\eqref{3.149} and \eqref{3.3.6} in \eqref{3.3.33}, we deduce 
\begin{align*}
	\|\mathcal{F}^{2}(\y)\|_{\H}&\leq\mu\delta_1\|\A\y\|_{\H}+\beta\delta_2 \|\mathcal{C}(\y)\|_{\H}+\|\mathcal{B}(\y)\|_{\H}+\kappa\|\y\|_{\H}\nonumber\\&\leq	
	\left[\frac{2\delta_1}{1-\delta_1}+\frac{\delta_2}{1-\delta_2}\right]\|\mathcal{F}^1(\y)\|_{\H}+(C_{\delta_1,\delta_2,\mu,\beta}+\kappa)\|\y\|_{\H}. 
\end{align*}
Let us choose $\delta_1$ and $\delta_2$ in such a way that $\rho=\frac{2\delta_1}{1-\delta_1}+\frac{\delta_2}{1-\delta_2}<1,$ for example, one can choose $\delta_1=\frac{1}{9}$, $\delta_2=\frac{1}{5}$, so that $\rho=\frac{1}{2}.$ Then by the well-known  perturbation theorem for nonlinear $m$-accretive operators (\cite[Chapter II, Theorem 3.5]{VB1}), we conclude that the operator $\mathcal{F}^{1}+\mathcal{F}^{2}$  with the domain $\D(\A)$  is $m$-accretive in $\H$. Since  $\mathcal{F}^{1}+\mathcal{F}^{2}=\mathcal{G}+\kappa\mathrm{I}$ , the operator  $\mathcal{G}+\kappa\mathrm{I}$ is $m$-accretive in $\H$. 
\end{proof}

\begin{remark}
	1. For $d=2$ with $r\in[1,\infty)$ and $d=3$ with $r\in[1,5]$, Sobolev's embedding yields $\V\subset\wi\L^{r+1}$, so that $\V\cap\wi\L^{r+1}=\V$. 
	
	2. For $d=r=3$ and $2\beta\mu\geq 1$, one can obtain global monotonicity of the operator $\mathcal{G}(\cdot):\V\to\V'$ in the following way:
	
	We estimate $|\langle\mathcal{B}(\y-\z,\y-\z),\z\rangle|$ using H\"older's and Young's inequalities as 
	\begin{align}\label{ae.}
		|\langle\mathcal{B}(\y-\z,\y-\z),\z\rangle|\leq\|\z(\y-\z)\|_{\H}\|\nabla(\y-\z)\|_{\H} \leq\mu \|\nabla(\y-\z)\|_{\H}^2+\frac{1}{4\mu }\|\z(\y-\z)\|_{\H}^2.
	\end{align} 
Combining \eqref{ae}, \eqref{2.27} and \eqref{ae.}, we obtain 
\begin{align}\label{gm}
	\langle\mathcal{G}(\y)-\mathcal{G}(\z),\y-\z\rangle\geq\frac{1}{2}\left(\beta-\frac{1}{2\mu }\right)\|\z(\y-\z)\|_{\H}^2\geq 0,
\end{align}
provided $2\beta\mu \geq 1$. Moreover, other properties like demicontinuity and coercivity can be proved in similar way as $r>3$ case (see the proof of Proposition \ref{prop33}).
\end{remark}


\begin{proposition}\label{prop3.3}
	Let $\Phi\subset\H\times\H$ be a maximal monotone operator satisfying Hypothesis \ref{hyp1}. Define the multi-valued operator  $\mathfrak{A}:\mathrm{D}(\mathfrak{A})\to\H$ by 
	\begin{equation*}
		\mathfrak{A}(\cdot) = \mu\mathrm{A} +\mathcal{B}(\cdot)+\beta\mathcal{C}(\cdot)+\Phi(\cdot)+\kappa\I,
	\end{equation*}
	with the domain $\mathrm{D}(\mathfrak{A})=\{\y\in\H: \varnothing \neq \mathfrak{A}(\y)\subset\H\}$. Then $\mathrm{D}(\mathfrak{A}) = \mathrm{D}(\mathrm{A})\cap\mathrm{D}(\Phi)$ and $\mathfrak{A}$ is a maximal monotone operator in $\H\times\H,$ where $\kappa$ is as in Proposition \ref{prop33}. 
	
Furthermore, the following estimates holds 	
\begin{align}\label{3.3838}
	\|\A\w\|_{\H}^2\leq C(1+\|\w\|_{\H}^2+\|\mu\mathrm{A}\w+\mathcal{B}(\w)+\beta\mathcal{C}(\w)+\Phi_\lambda(\w)\|_{\H}^2)^{\vartheta},
\end{align}
for every $\w\in\D(\A), \lambda>0$ and 
\begin{align}\label{3.3939}
\|\A\w\|_{\H}^2\leq C(1+\|\w\|_{\H}^2+\|\mu\mathrm{A}\w+\mathcal{B}(\w)+\beta\mathcal{C}(\w)+\xi\|_{\H}^2)^{\vartheta},	
\end{align}
for every $\w\in\D(\A)\cap\D(\Phi)$ and $\xi\in\Phi(\w),$ where 
\begin{align}\label{vartheta}
	\vartheta=
	\begin{cases}
		r,  &\text{when} \ d=2 \  \text{with} \ r\in(3,\infty),\\
		\frac{r+3}{5-r}, &\text{when} \ d=3 \ \text{with} \ r\in(3,5),\\
			3,  &\text{when} \ d=r=3 \  \text{with} \ 2\beta\mu\geq1,\\
		1,  &\text{when} \ d=3 \ \text{with} \ r\in[5,\infty).
	\end{cases}
\end{align}

\end{proposition}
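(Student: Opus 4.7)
The plan is to establish maximal monotonicity in two stages, then derive the regularity bounds \eqref{3.3838}--\eqref{3.3939}. Monotonicity of $\mathfrak{A}$ is immediate: Proposition \ref{prop33} gives that $\mathcal{G}+\kappa\I$ is monotone on $\D(\A)$ and $\Phi$ is maximal monotone on $\H\times\H$, so their sum is monotone on $\D(\A)\cap\D(\Phi)$. The substantive task is verifying the range condition $R(\I+\mathfrak{A})=\H$, which I would attack via Yosida regularization of $\Phi$.

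For each $\lambda>0$, $\Phi_\lambda:\H\to\H$ is everywhere defined, monotone, and Lipschitz, so $\mathcal{G}+\kappa\I+\Phi_\lambda$ remains $m$-accretive on $\D(\A)$ by \cite[Chapter II, Theorem 3.5]{VB1}. Hence for every $\f\in\H$ there exists a unique $\y_\lambda\in\D(\A)$ with
\begin{equation*}
\y_\lambda+\mu\A\y_\lambda+\mathcal{B}(\y_\lambda)+\beta\mathcal{C}(\y_\lambda)+\kappa\y_\lambda+\Phi_\lambda(\y_\lambda)=\f.
\end{equation*}
Testing with $\y_\lambda$ (the Yosida term being controlled by monotonicity together with (H.2)) yields $\lambda$-uniform bounds in $\H$, $\V$, and $\wi\L^{r+1}$. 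The decisive step is then to test with $\Phi_\lambda(\y_\lambda)$: the term $(\mu\A\y_\lambda,\Phi_\lambda(\y_\lambda))$ is absorbed using (H.3), while the mixed products $(\mathcal{B}(\y_\lambda),\Phi_\lambda(\y_\lambda))$ and $(\mathcal{C}(\y_\lambda),\Phi_\lambda(\y_\lambda))$ are controlled by H\"older and Sobolev embeddings, producing a uniform bound $\|\Phi_\lambda(\y_\lambda)\|_\H\leq C$, and in turn $\|\A\y_\lambda\|_\H\leq C$ by the estimates \eqref{3.147}--\eqref{3.3.6} already derived in the proof of Proposition \ref{prop33}.

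With these bounds in hand, extract a subsequence $\y_\lambda\rightharpoonup\y$ in $\D(\A)$ and $\Phi_\lambda(\y_\lambda)\rightharpoonup\xi$ in $\H$; Rellich compactness gives strong convergence in $\V$, which (together with the $\wi\L^{r+1}$ bound and interpolation) is enough to pass to the limit $\mathcal{B}(\y_\lambda)\to\mathcal{B}(\y)$ and $\mathcal{C}(\y_\lambda)\to\mathcal{C}(\y)$ in $\H$. The standard identification $\Phi_\lambda(\y_\lambda)\in\Phi((\I+\lambda\Phi)^{-1}\y_\lambda)$ combined with $\|\y_\lambda-(\I+\lambda\Phi)^{-1}\y_\lambda\|_\H=\lambda\|\Phi_\lambda(\y_\lambda)\|_\H\to 0$ and the demiclosedness of the graph of $\Phi$ then gives $\xi\in\Phi(\y)$, so the limit equation $\y+\mu\A\y+\mathcal{B}(\y)+\beta\mathcal{C}(\y)+\kappa\y+\xi=\f$ holds in $\H$. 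This proves the range condition and hence maximality. The characterization $\D(\mathfrak{A})=\D(\A)\cap\D(\Phi)$ is then routine: the inclusion $\supseteq$ is clear since $\mathcal{B}(\y),\mathcal{C}(\y)\in\H$ on $\D(\A)$ via \eqref{3.147}--\eqref{3.149}, and the reverse inclusion follows by applying Proposition \ref{prop33} to recover $\A\y\in\H$ whenever $\mu\A\y+\mathcal{B}(\y)+\beta\mathcal{C}(\y)\in\H$.

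For the bound \eqref{3.3838}, I would isolate $\mu\A\w$ and estimate $\|\mathcal{B}(\w)\|_\H$, $\|\mathcal{C}(\w)\|_\H$ using Sobolev embeddings $\H^2_\mathrm{p}\hookrightarrow\wi\L^q$ interpolated against $\|\w\|_\H$, so that Young's inequality absorbs $\|\A\w\|_\H$ on the right and produces precisely the power $\vartheta$ listed in \eqref{vartheta}; the bound \eqref{3.3939} follows from \eqref{3.3838} by weak lower semicontinuity of norms along the Yosida limit. The main obstacle I expect is the case $d=3$ with $r\geq 5$: here $\V\not\hookrightarrow\wi\L^{2r}$ and the absorption trick used for the subcritical range breaks down. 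Precisely for this reason Hypothesis \ref{hyp1} (H.3) is imposed with $\varsigma=0$, which removes the need to extract a factor of $\|\Phi_\lambda(\y_\lambda)\|_\H^2$ on the right, and at the level of \eqref{3.3838} the regularity bound collapses to the linear exponent $\vartheta=1$. The intermediate exponents $\vartheta=\tfrac{r+3}{5-r}$ for $r\in(3,5)$ and $\vartheta=3$ for $d=r=3$ with $2\beta\mu\geq 1$ arise from the corresponding Sobolev interpolations between $\V$ and $\D(\A)$ applied to the nonlinear terms.
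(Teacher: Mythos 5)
Your proposal follows essentially the same route as the paper's proof: Yosida regularization of $\Phi$, solvability of the approximate stationary equation by maximal monotonicity of $\mathcal{F}+\Phi_{\lambda}$, $\lambda$-uniform bounds obtained by testing with $\y_{\lambda}$, $\A\y_{\lambda}$ and $\Phi_{\lambda}(\y_{\lambda})$ (with (H.3) absorbing the cross term and $\varsigma=0$ rescuing the case $d=3$, $r\in[5,\infty)$), compactness plus closedness of the graph of $\Phi$ to pass to the limit and verify the range condition, and the observation that \eqref{3.3838}--\eqref{3.3939} are precisely these a priori bounds re-read with the datum $\f$ replaced by $\mu\A\w+\mathcal{B}(\w)+\beta\mathcal{C}(\w)+\Phi_{\lambda}(\w)+\widetilde{\kappa}\w$ (respectively with $\xi$, followed by a passage to the Yosida limit and weak lower semicontinuity). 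The only caveat is presentational: the bounds on $\|\Phi_{\lambda}(\y_{\lambda})\|_{\H}$ and $\|\A\y_{\lambda}\|_{\H}$ cannot be obtained sequentially as you state, since the estimate of $(\mathcal{B}(\y_{\lambda}),\Phi_{\lambda}(\y_{\lambda}))$ (via Agmon) and, in 3D, of $(\mathcal{C}(\y_{\lambda}),\Phi_{\lambda}(\y_{\lambda}))$ reintroduces $\|\A\y_{\lambda}\|_{\H}^2$ and $\||\y_{\lambda}|^{(r-1)/2}\nabla\y_{\lambda}\|_{\H}^2$, so the two tests must be closed simultaneously with small absorption constants, exactly as in the paper's Steps II--III.
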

\begin{proof}

	It has been shown in Proposition \ref{prop33} that  the operator $\mathcal{F}(\cdot)=\mu\mathrm{A} +\mathcal{B}(\cdot)+\beta\mathcal{C}(\cdot)+\kappa\mathrm{I}$ is maximal monotone with domain $\mathrm{D}(\mathcal{F})=\mathrm{D}(\mathrm{A})$ in $\H\times\H.$ Note that  $\mathfrak{A}=\mathcal{F}+\Phi$ implies   $\mathrm{D}(\mathrm{A})\cap\mathrm{D}(\Phi)\subseteq\mathrm{D}(\mathfrak{A})$ and since  $\mathfrak{A}$  is the sum of two monotone operators, it is monotone. In order to prove $\mathfrak{A}$ is maximal monotone, we need to show that 
	\begin{align}\label{316}
		\mathrm{R}(\mathrm{I}+\mathfrak{A})=\H.
	\end{align} 
	\vskip 0.2 cm
	\noindent 
	\textbf{Step I:} \textsl{Well-posedness of the Yosida approximated problem.} 	Let $\f\in\H$ be arbitrary but fixed. We approximate the inclusion problem 
	\begin{equation}
		\y+\mu\mathrm{A}\y+\mathcal{B}(\y)+\beta\mathcal{C}(\y)+\Phi(\y)+\kappa\y\ni \f,
	\end{equation}
	by the equation 
	\begin{equation}\label{Y1}
		\y_{\lambda}+\mu\mathrm{A}\y_{\lambda}+\mathcal{B}(\y_{\lambda})+\beta\mathcal{C}(\y_{\lambda})+\Phi_{\lambda}(\y_{\lambda})+\kappa\y_{\lambda}=\f,
	\end{equation}
where $\Phi_{\lambda}$  is the Yosida approximation of $\Phi$. 
	By the properties of Yosida approximation, $\Phi_{\lambda}$ is demicontinuous and monotone (see \cite[Chapter 2, Proposition 1.3]{VB1}). Therefore the sum $\mathcal{F}(\cdot)+\Phi_{\lambda}(\cdot)$ is maximal monotone (see \cite[Chapter 2, Corollary 1.1]{VB2}). This guarantees the existence of a solution $\y_{\lambda}\in\mathrm{D}(\mathrm{A})$ for \eqref{Y1}.
	Let $\widetilde{\kappa}=\kappa+1. $ Then \eqref{Y1} can be written as 
	\begin{equation}\label{Y2}
		\mu\mathrm{A}\y_{\lambda}+\mathcal{B}(\y_{\lambda})+\beta\mathcal{C}(\y_{\lambda})+\Phi_{\lambda}(\y_{\lambda})+\widetilde{\kappa}\y_{\lambda}=\f. 	
	\end{equation} 
We shall now prove the uniqueness. Let $\y_{\lambda}$ and $\z_{\lambda}$ be two solutions of the equation \eqref{Y2} with the same data $\f$ and let $\w_{\lambda}=\y_{\lambda}-\z_{\lambda}.$ Then we have 
\begin{equation}\label{Z1}
	\mu\mathrm{A}\w_{\lambda}+\mathcal{B}(\y_{\lambda})-\mathcal{B}(\z_{\lambda})+\beta(\mathcal{C}(\y_{\lambda})-\mathcal{C}(\z_{\lambda}))+\Phi_{\lambda}(\y_{\lambda})-\Phi_{\lambda}(\z_{\lambda})+\widetilde{\kappa}\w_{\lambda}=\mathbf{0}.
\end{equation}   	
Taking the inner product with $\w_{\lambda}$ in \eqref{Z1}, we get
\begin{align}\label{Z6}
	\mu\|\nabla\w_{\lambda}\|_{\H}^{2}&+(\Phi_{\lambda}(\y_{\lambda})-\Phi_{\lambda}(\z_{\lambda}),\w_{\lambda})+\widetilde{\kappa}\|\w_{\lambda}\|_{\H}^{2}\nonumber\\&=-(\mathcal{B}(\y_{\lambda})-\mathcal{B}(\z_{\lambda}),\w_{\lambda})-\beta(\mathcal{C}(\y_{\lambda})-\mathcal{C}(\z_{\lambda}),\w_{\lambda}).
\end{align}
By similar calculations as in \eqref{2.30} and \eqref{2.23}, we obtain 
\begin{align}\label{Z6..}
	-(\mathcal{B}(\y_{\lambda})-\mathcal{B}(\z_{\lambda})-\beta(\mathcal{C}(\y_{\lambda})+\mathcal{C}(\z_{\lambda})),\w_{\lambda})\leq
\frac{\mu}{2}\|\nabla\w_{\lambda}\|_{\H}^{2}+\varrho\|\w_{\lambda}\|_{\H}^{2},
\end{align}
 By \cite[Chapter 2, Proposition 1.3, part (i)]{VB2},  we know that $\Phi_{\lambda}$ is  monotone, so that $(\Phi_{\lambda}(\y_{\lambda})-\Phi_{\lambda}(\z_{\lambda}),\w_{\lambda})\geq 0$ for any $\lambda>0$. Therefore, we conclude from \eqref{Z6} that 
\begin{align*}
	\frac{\mu}{2}\|\nabla\w_{\lambda}\|_{\H}^{2}+\left(\widetilde{\kappa}-\varrho\right)\|\w_{\lambda}\|_{\H}^{2}\leq 0.
\end{align*}
Since $\varrho<\widetilde{\kappa}$, we get $\w_{\lambda}=\boldsymbol{0}$ and thus $\y_{\lambda}=\z_{\lambda}.$ 
\vskip 0.2 cm
\noindent 
	\textbf{Step II:} \textsl{Uniform bounds for $\y_{\lambda}$.}  Let us   take the inner product with $\y_{\lambda}$ in \eqref{Y2} to get 
	\begin{equation}\label{P1}
		\mu\|\nabla\y_{\lambda}\|_{\H}^{2} +\beta(\mathcal{C}(\y_{\lambda}),\y_{\lambda})+(\Phi_{\lambda}(\y_{\lambda}),\y_{\lambda})+\widetilde{\kappa}\|\y_{\lambda}\|_{\H}^{2} = (\f,\y_{\lambda}),
	\end{equation}
since $(\mathcal{B}(\y_{\lambda}),\y_{\lambda})=0$. 	As the operator $\Phi_{\lambda}$ is monotone with $\boldsymbol{0}\in\mathrm{D}(\Phi_{\lambda})=\H$,  we infer 
	\begin{equation}
		(\Phi_{\lambda}(\y_{\lambda}),\y_{\lambda})\geq(\Phi_{\lambda}(\boldsymbol{0}),\y_{\lambda}).
	\end{equation}
	By applying Young's inequality and by \cite[Chapter 2, Proposition 1.3, part (ii)]{VB2}, we have
	\begin{equation}\label{M1}
		-(\Phi_{\lambda}(\boldsymbol{0}),\y_{\lambda})\leq\|\Phi_{\lambda}(\boldsymbol{0})\|_{\H}\|\y_{\lambda}\|_{\H}\leq\frac{1}{\widetilde{\kappa}}\|\Phi(\boldsymbol{0})\|_{\H}^{2}+\frac{\widetilde{\kappa}}{4}\|\y_{\lambda}\|_{\H}^{2}.	
	\end{equation}
	Then equation \eqref{P1} yields 
	\begin{equation}\label{e1.2}
		\|\y_{\lambda}\|_{\H}^{2}+ \|\nabla\y_{\lambda}\|_{\H}^{2} +	\|\y_{\lambda}\|_{\widetilde{\L}^{r+1}}^{r+1} \leq C(1+\|\f\|_{\H}^{2}), \  \text{ for all } \ \lambda>0,
	\end{equation}
	where the constant $C=C(\mu,\beta,\widetilde{\kappa},\|\Phi(\boldsymbol{0})\|_{\H})$ does not depend on $\lambda$. Taking the  inner product of \eqref{Y2} with $\mathrm{A\y_{\lambda}},$ we get
	\begin{equation}\label{e1.3}
		\mu\|\mathrm{A}\y_{\lambda}\|_{\H}^{2} +(\mathcal{B}(\y_{\lambda}),\mathrm{A}\y_{\lambda})+\beta(\mathcal{C}(\y_{\lambda}),\mathrm{A}\y_{\lambda})+(\Phi_{\lambda}(\y_{\lambda}),\mathrm{A}\y_{\lambda})+\widetilde{\kappa}\|\nabla\y_{\lambda}\|_{\H}^{2} = (\f,\mathrm{A}\y_{\lambda}).
	\end{equation}
By \cite[ {Lemma 3.1}, pp. 404]{RT1}, we have $(\mathcal{B}(\y_{\lambda}),\mathrm{A}\y_{\lambda}) =0$ for $d=2.$
For $d=3$, we consider the cases $r>3$ and $r=3$ with $2\beta\mu\geq1$ separately. 
	\vskip 2mm
	\noindent
	\textbf{Case I:} \textsl{$r>3.$}
	From Cauchy-Schwarz and Young's inequalities, we obtain
	\begin{align}\label{f1}
		(\f,\mathrm{A}\y_{\lambda})\leq\|\f\|_{\H}\|\A\y\|_{\H}\leq\frac{\mu}{4}\|\A\y\|_{\H}^2+\frac{1}{\mu}\|\f\|_{\H}^2.
	\end{align}
	We estimate $|(\mathcal{B}(\y_{\lambda}),\A\y_{\lambda})|$ using H\"older's and Young's inequalities as  (see \eqref{2..29} for more details)
	\begin{align}\label{373}
		|(\mathcal{B}(\y_{\lambda}),\A\y_{\lambda})|&\leq\||\y_{\lambda}||\nabla\y_{\lambda}|\|_{\H}\|\A\y_{\lambda}\|_{\H}\nonumber\\&\leq\frac{\mu}{2}\|\A\y_{\lambda}\|_{\H}^2+\frac{\beta}{2}\||\y_{\lambda}|^{\frac{r-1}{2}}|\nabla\y_{\lambda}|\|_{\H}^{2}+\varrho\|\nabla\y_\lambda\|_{\H}^2
	\end{align}
	Using the condition (H.3) of Hypothesis \ref{hyp1},  estimates \eqref{e1.2}, \eqref{f1}-\eqref{373} and equality \eqref{3} in  \eqref{e1.3}, it yields for all $\lambda>0$ 
	\begin{align}\label{phi1}
	&	\frac{\mu}{4}\|\mathrm{A\y_{\lambda}}\|_{\H}^{2}+\frac{\beta}{2}\||\y_{\lambda}|^{\frac{r-1}{2}}|\nabla\y_{\lambda}|\|_{\H}^{2}+4\beta\left[\frac{r-1}{(r+1)^2}\right]\||\nabla|\y_{\lambda}|^{\frac{r+1}{2}}|\|_{\H}^{2}\nonumber\\&\leq C(1+\|\f\|_{\H}^{2})+
			\begin{cases}
				\varsigma\|\Phi_{\lambda}(\y_{\lambda})\|_{\H}^{2}, &\text{ for } d=2 \text{ with } r\in(3,\infty)\text{ and } d=3 \text{ with } r\in(3,5),\\
				0,   &\text{ for } d=3 \text{ with } r\in[5,\infty).
			\end{cases}
	\end{align}
Note that we complete the proof of energy estimates for $d=3$ with $r\in[5,\infty).$ But, for the other cases, we need the uniform bound of $\varsigma\|\Phi_{\lambda}(\y_{\lambda})\|^2_{\H}$ which will be shown in the next step.
	\vskip 2mm
\noindent
\textbf{Case II:} \textsl{$r=3$ with $2\beta\mu\geq1.$}
From the equality \eqref{3}, and Cauchy-Schwarz and Young's inequalities, one can obtain
\begin{align}\label{r3e}
\frac{3\mu}{8}\|\A\y_\lambda\|_{\H}^2+\left(\beta-\frac{1}{2\mu}\right)\||\y_\lambda||\nabla\y_\lambda|\|_{\H}^2+\frac{\beta}{2}\||\nabla|\y_{\lambda}|^2|\|_{\H}^{2}\leq C(1+\|\f\|_{\H}^{2})+\varsigma\|\Phi_{\lambda}(\y_{\lambda})\|_{\H}^{2}. 
\end{align}

\vskip 2mm
\noindent
\textbf{Step III:} \textsl{Uniform bounds for  $\|\Phi_{\lambda}(\y_{\lambda})\|_{\H}^{2}.$}
Taking the inner product with $\Phi_{\lambda}(\y_{\lambda})$ in \eqref{Y2}, we have
	\begin{align}\label{329}
		\|\Phi_{\lambda}(\y_{\lambda})\|_{\H}^{2}&=(\f,\Phi_{\lambda}(\y_{\lambda}))-(\mathcal{B}(\y_{\lambda}),\Phi_{\lambda}(\y_{\lambda}))-\beta(\mathcal{C}(\y_{\lambda}),\Phi_{\lambda}(\y_{\lambda}))-\mu(\mathrm{A\y_{\lambda}},\Phi_{\lambda}(\y_{\lambda}))\nonumber\\&\quad-\widetilde{\kappa}(\y_{\lambda},\Phi_{\lambda}(\y_{\lambda})) . 
	\end{align} Similar to \eqref{M1}, we have 
	\begin{align}\label{m}
		(\y_{\lambda},\Phi_{\lambda}(\y_{\lambda}))\geq-\frac{1}{2}(\|\Phi(\boldsymbol{0})\|_{\H}^{2}+\|\y_{\lambda}\|_{\H}^{2}).
	\end{align}
	We calculate $|(\mathcal{B}(\y_{\lambda}),\Phi_{\lambda}(\y_{\lambda})|$ using  \eqref{e1.2}, Agmon's and Young's inequalities  as 
	\begin{align}\label{e1.4}
			|(\mathcal{B}(\y_{\lambda}),\Phi_{\lambda}(\y_{\lambda})|&=|b(\y_{\lambda},\y_{\lambda},\Phi_{\lambda}(\y_{\lambda}))|\nonumber\\&\leq C	\|\y_{\lambda}\|_{\H}^{1-\frac{d}{4}}\|\nabla\y_{\lambda}\|_{\H}\|\y_{\lambda}\|_{\H^2_\mathrm{p}}^{\frac{d}{4}}\|\Phi_{\lambda}(\y_{\lambda})\|_{\H}\nonumber\\&\leq C
			\|\y_{\lambda}\|_{\H}^{1-\frac{d}{4}}\|\nabla\y_{\lambda}\|_{\H}(\|\A\y_{\lambda}\|_{\H}^{\frac{d}{4}}+\|\y_\lambda\|_{\H}^{\frac{d}{4}})\|\Phi_{\lambda}(\y_{\lambda})\|_{\H}\nonumber\\&= C
			\|\y_{\lambda}\|_{\H}^{1-\frac{d}{4}}\|\nabla\y_{\lambda}\|_{\H}\|\A\y_{\lambda}\|_{\H}^{\frac{d}{4}}\|\Phi_{\lambda}(\y_{\lambda})\|_{\H}+\|\y_\lambda\|_{\H}\|\nabla\y_{\lambda}\|_{\H}\|\Phi_{\lambda}(\y_{\lambda})\|_{\H}\nonumber\\&\leq
			\frac{1-\mu\varsigma}{8}\|\Phi_{\lambda}(\y_{\lambda})\|_{\H}^{2}+\frac{\mu(1-\mu\varsigma)}{8\varsigma}\|\mathrm{A\y_{\lambda}}\|_{\H}^{2}+C(1+\|\f\|_{\H}^2)^3.
	\end{align}
By using the Cauchy-Schwarz, interpolation and Young's inequalities, \eqref{e1.2} and \eqref{phi1}, we obtain
\begin{align}\label{3.5555}
&|(\mathcal{C}(\y_{\lambda}),\Phi_{\lambda}(\y_{\lambda}))|\nonumber\\&\leq\|\mathcal{C}(\y_{\lambda})\|_{\H}\|\Phi_{\lambda}(\y_{\lambda})\|_{\H}\leq\|\y_{\lambda}\|_{\wi\L^{2r}}^r\|\Phi_{\lambda}(\y_{\lambda})\|_{\H}\nonumber\\&\leq
\|\y_{\lambda}\|_{\widetilde{\L}^{r+1}}^{\frac{r+3}{4}}\|\y_{\lambda}\|_{\widetilde{\L}^{3(r+1)}}^{\frac{3(r-1)}{4}}\|\Phi_{\lambda}(\y_{\lambda})\|_{\H}\nonumber\\&\leq
C\|\y_{\lambda}\|_{\widetilde{\L}^{r+1}}^{\frac{r+3}{4}}\||\y_{\lambda}|^\frac{r-1}{2}\nabla\y_{\lambda}\|_{\H}^\frac{3(r-1)}{2(r+1)}\|\Phi_{\lambda}(\y_{\lambda})\|_{\H}\nonumber\\&\leq
C(1+\|\f\|_{\H}^2)^\frac{r+3}{4(r+1)}\left[\frac{2\varsigma}{\beta}\|\Phi_{\lambda}(\y_{\lambda})\|_{\H}^{2}+\frac{2C}{\beta}(1+\|\f\|_{\H}^{2})\right]^\frac{3(r-1)}{4(r+1)}\|\Phi_{\lambda}(\y_{\lambda})\|_{\H} 
\nonumber\\&\leq C \left[\|\Phi_{\lambda}(\y_{\lambda})\|_{\H}^{\frac{5r-1}{2(r+1)}}(1+\|\f\|_{\H}^2)^\frac{r+3}{4(r+1)}+\|\Phi_{\lambda}(\y_{\lambda})\|_{\H}(1+\|\f\|_{\H}^2)^\frac{r}{r+1}\right]\nonumber\\&\leq
\frac{1-\mu\varsigma}{8\beta}\|\Phi_{\lambda}(\y_{\lambda})\|_{\H}^{2}+C(1+\|\f\|_{\H}^2)^\frac{r+3}{5-r} +C(1+\|\f\|_{\H}^2)^\frac{2r}{r+1}\nonumber\\&\leq
\frac{1-\mu\varsigma}{8\beta}\|\Phi_{\lambda}(\y_{\lambda})\|_{\H}^{2}+C(1+\|\f\|_{\H}^2)^\frac{r+3}{5-r}, \qquad\quad\ \ \ \mbox{for $d=3$ with $r\in(3,5)$,}
\end{align}
where we have used the fact  that $\frac{r+3}{5-r}>\frac{2r}{r+1}$.  Using the Sobolev embedding, we deduce 
\begin{align}\label{3.6666}
&|(\mathcal{C}(\y_{\lambda}),\Phi_{\lambda}(\y_{\lambda}))|\nonumber\\&\leq\|\mathcal{C}(\y_{\lambda})\|_{\H}\|\Phi_{\lambda}(\y_{\lambda})\|_{\H}\leq\|\y_{\lambda}\|_{\wi\L^{2r}}^r\|\Phi_{\lambda}(\y_{\lambda})\|_{\H}\nonumber\\&\leq 
\|\y_\lambda\|_{\V}^r\|\Phi_{\lambda}(\y_{\lambda})\|_{\H}\leq C(1+\|\f\|_{\H}^2)^\frac{r}{2}\|\Phi_{\lambda}(\y_{\lambda})\|_{\H}\nonumber\\&\leq
\frac{1-\mu\varsigma}{8\beta}\|\Phi_{\lambda}(\y_{\lambda})\|_{\H}^{2}+C(1+\|\f\|_{\H}^2)^r, \qquad\quad\ \ \ \mbox{for $d=2$ with $r\in(3,\infty)$.}
\end{align}
 Also, by the Cauchy-Schwarz and Young's inequalities, we get 
\begin{align}\label{3.7777}
	|(\f,\Phi_{\lambda}(\y_{\lambda}))|\leq\frac{1}{1-\mu\varsigma}\|\f\|_{\H}^{2}+\frac{1-\mu\varsigma}{4}\|\Phi_{\lambda}(\y_{\lambda})\|_{\H}^{2}. 
\end{align}
Using the estimates \eqref{e1.2} and \eqref{m}-\eqref{3.7777} in  \eqref{329}, we arrive at

\begin{align}\label{3.5959}
\varsigma\|\Phi_{\lambda}(\y_{\lambda})\|_{\H}^{2}\leq\frac{\mu}{4}\|\A\y\|_{\H}^2+
\begin{cases}
	C(1+\|\f\|_{\H}^2)^r, \  &\text{for} \  d=2 \text{ with } r\in(3,\infty),\\
	C(1+\|\f\|_{\H}^2)^\frac{r+3}{5-r}, \  &\text{for} \  d=3 \text{ with } r\in(3,5),\\
	C(1+\|\f\|_{\H}^2)^3, \  &\text{for} \  d=r=3.
\end{cases}
\end{align}
\textsl{Uniform boundedness of sequences.} It implies from \eqref{phi1} and \eqref{3.5959} that
\begin{align}\label{3.6060}
		\frac{\mu}{4}\|\mathrm{A\y_{\lambda}}\|_{\H}^{2}+\frac{\beta}{2}\||\nabla \y_{\lambda}||\y_{\lambda}|^{\frac{r-1}{2}}\|_{\H}^{2}\leq
		\begin{cases}
				C(1+\|\f\|_{\H}^2)^r, \  &\text{for} \  d=2 \ \text{with} \ r\in(3,\infty),\\		C(1+\|\f\|_{\H}^2)^\frac{r+3}{5-r}, \  &\text{for} \  d=3 \ \text{with} \ r\in(3,5),\\
				C(1+\|\f\|_{\H}^{2}), \  &\text{for} \  d=3 \ \text{with} \ r\in(5,\infty).\\
		\end{cases}
\end{align}
 For $d=r=3$ with $2\beta\mu\geq1$, using \eqref{3.5959} in \eqref{r3e}, we obtain
 	\begin{align}\label{r3e.}
 		&\frac{\mu}{8}\|\A\y_\lambda\|_{\H}^2+\left(\beta-\frac{1}{2\mu}\right)\||\y_\lambda||\nabla\y_\lambda|\|_{\H}^2\leq C(1+\|\f\|_{\H})^{3}.
 	\end{align}
 
Thus under Hypothesis \ref{hyp1} (condition (H.3)),  we have
	\begin{align}\label{336}
		\|\mathrm{A\y_{\lambda}}\|_{\H}\leq C  \ \text{ and } \  \||\nabla\y_{\lambda}||\y_{\lambda}|^{\frac{r-1}{2}}\|_{\H}\leq C
	\end{align}
	for $d=2,3$ with $r\in(3,\infty)$ and $d=r=3$ with $2\beta\mu\geq1$ for all $\y_{\lambda}\in\mathrm{D}(\mathrm{A}).$ Using interpolation inequality and estimates \eqref{P.11} and \eqref{P.22}, we have 
	\begin{align}\label{359}
		\|\mathcal{C}(\y_{\lambda})\|_{\H}\leq\|\y_{\lambda}\|_{\widetilde{\L}^{2r}}^{r}\leq\|\y_{\lambda}\|_{\widetilde{\L}^{r+1}}^{\frac{r+3}{4}}\|\y_{\lambda}\|_{\widetilde{\L}^{3(r+1)}}^{\frac{3(r-1)}{4}}\leq C, 
	\end{align}
for all $\y_\lambda\in\D(\A)$. Also,  by using H\"older's and Agmon's inequalities, we obtain 
	\begin{align*}
		\|\mathcal{B}(\y_{\lambda})\|_{\H}\leq\|(\y_{\lambda}\cdot\nabla) \y_{\lambda}\|_{\H}\leq\|\nabla\y_{\lambda}\|_{\H}
			\|\y_{\lambda}\|_{\H}^{1-\frac{d}{4}}\|\y_{\lambda}\|_{\H_{\mathrm{p}}^2}^{\frac{d}{4}}\leq C, 
	\end{align*}
for all $\y_\lambda\in\D(\A)$. Now, the equation \eqref{Y1} can be rewritten as 
	\begin{equation}\label{Y3}
		\y_{\lambda}+\mathcal{F}(\y_{\lambda})+\Phi_{\lambda}(\y_{\lambda})=\f, 
	\end{equation} 
where  $\mathcal{F}(\cdot)=\mu\mathrm{A} +\mathcal{B}(\cdot)+\beta\mathcal{C}(\cdot)+\kappa\I$. Hence from \eqref{336}, we conclude that 
	\begin{align}\label{338}
		\|\mathcal{F}(\y_{\lambda})\|_{\H}\leq C\ \text{ and } \ \|\Phi_{\lambda}(\y_{\lambda})\|_{\H}\leq C ,   \ \text{ for all }\ \y_{\lambda}\in\mathrm{D}(\mathrm{A}).
	\end{align}

\vskip 0.2 cm
\noindent 
\textbf{Step IV:} \textsl{Convergence of $\y_{\lambda}$ and proof of \eqref{316}.} 
The estimates \eqref{e1.2}, \eqref{336} and  \eqref{338}, and the Banach-Alaoglu theorem  guarantee the existence of a   weakly convergent subsequence $\{\y_{\lambda_j}\}$ of $\{\y_{\lambda}\}$ such that as $j\to\infty$
	\begin{equation}\label{3p4}
		\left\{
		\begin{aligned}
			\y_{\lambda_{j}}&\rightharpoonup \y, \ \text{ in } \  \V, \\
			\A\y_{\lambda_{j}}&\rightharpoonup \A\y, \ \text{ in }\ \H , 
		\end{aligned}	
	\right. \ \
		\left\{
	\begin{aligned}
 \Phi_{\lambda}(\y_{\lambda_{j}})&\rightharpoonup \f_{1}, \ \text{ in }\ \H ,  \\ 
		\mathcal{F}(\y_{\lambda_{j}})&\rightharpoonup \f_{2}, \ \text{ in }\ \H .
	\end{aligned}	
	\right.
	\end{equation}
	Since the embedding $\mathrm{D}(\mathrm{A})\hookrightarrow\V$ is compact,  we get the following strong convergence also:  \begin{align}\label{3p5}\y_{\lambda_{j}}\to \y\ \text{  in }\ \V.\end{align}
	Passing weak limit in \eqref{Y3}, we get
	\begin{align}
		\y + \f_{1}+\f_{2} = \f \ \text{ in }\ \H. 
	\end{align}
	In order to prove \eqref{316}, we need to show that $\f_{2}=\mathcal{F}(\y)$ and $\f_{1}\in\Phi(\y)$. For this, we rewrite equation \eqref{Y3} for $\lambda$ and $\widetilde{\lambda}$,  subtract and then take the inner product with $\y_{\lambda}-\y_{\widetilde{\lambda}}$  to find 
	\begin{align}\label{340}
		(\Phi_{\lambda}(\y_{\lambda})-\Phi_{\widetilde{\lambda}}(\y_{\widetilde{\lambda}}),\y_{\lambda}-\y_{\widetilde{\lambda}})+
		((\mathcal{F}+\mathrm{I})(\y_{\lambda})-(\mathcal{F}+\mathrm{I})(\y_{\widetilde{\lambda}}),\y_{\lambda}-\y_{\widetilde{\lambda}})=0.
	\end{align}
	By the monotonicity of $\mathcal{F}+\mathrm{I}$ (cf. Proposition \ref{prop33}), we conclude that 
	\begin{align}\label{3p6}
		(\Phi_{\lambda}(\y_{\lambda})-\Phi_{\widetilde{\lambda}}(\y_{\widetilde{\lambda}}),\y_{\lambda}-\y_{\widetilde{\lambda}})\leq 0,
	\end{align}
	for all $\lambda, \widetilde{\lambda}>0.$ By \cite[Proposition 1.3, part (iv), pp. 49]{VB2} (see \eqref{3p4}-\eqref{3p5} and \eqref{3p6}), we conclude that $(\y,\f_{1})\in\Phi$ and 
$\lim\limits_{\lambda,\widetilde{\lambda}\to 0} (\Phi_{\lambda}(\y_{\lambda})-\Phi_{\widetilde{\lambda}}(\y_{\widetilde{\lambda}}),\y_{\lambda}-\y_{\widetilde{\lambda}})=0.$ This also implies from \eqref{340} that 
	\begin{align}\label{345}
		\lim\limits_{\lambda,\widetilde{\lambda}\to 0} ((\mathcal{F}+\mathrm{I})(\y_{\lambda})-(\mathcal{F}+\mathrm{I})(\y_{\widetilde{\lambda}}),\y_{\lambda}-\y_{\widetilde{\lambda}})=0.
	\end{align}
	Since  $\y_{\lambda}\to \y, \  \mathcal{F}(\y_{\lambda})\rightharpoonup \f_{2}$ in $\H$ (see \eqref{3p4}-\eqref{3p5}), $\mathcal{F}+\mathrm{I}$ is maximal monotone (cf. Proposition \ref{prop33}) and \eqref{345} holds, then  by \cite[Lemma 1.3, pp. 49]
	{VB2}, we deduce that  $(\y,\y+\f_{2})\in\mathcal{F}+\mathrm{I},$ and this implies that $\mathcal{F}(\y)=\f_{2}.$ Hence it follows that $\f\in \y+\mathcal{F}(\y)+\Phi(\y),$ as claimed in \eqref{316}. It also follows that $\y\in\mathrm{D}(\mathcal{F})\cap\mathrm{D}(\Phi)=\mathrm{D}(\mathrm{A})\cap\mathrm{D}(\Phi)$ and hence $\mathrm{D}(\mathfrak{A})=\mathrm{D}(\mathrm{A})\cap\mathrm{D}(\Phi).$
	\vskip 0.2 cm
	\noindent 
	\textbf{Step V:} \textsl{Proof of \eqref{3.3838} and \eqref{3.3939}.} 
		From \eqref{3.6060}, it implies that 
	\begin{align}\label{a3}
		\|\mathrm{A\y_{\lambda}}\|_{\H}^{2}\leq C(1+\|\f\|_{\H}^{2})^\vartheta,
	\end{align}
where $\vartheta$ is given as in \eqref{vartheta}. For a fixed $\lambda>0$ and $\w\in\mathrm{D}(\mathrm{A})$, let
\begin{align}\label{3.725}
\g_{\lambda}=\mu\mathrm{A}\w+\mathcal{B}(\w)+\beta\mathcal{C}(\w)+\Phi_{\lambda}(\w)+\widetilde{\kappa}\w.
\end{align}
Then
\begin{align}\label{3.7171}
	\|\g_\lambda\|_{\H}^2\leq
	2 \wi\kappa^2\|\w\|_{\H}^2+ 2\|\mu\mathrm{A}\w+\mathcal{B}(\w)+\beta\mathcal{C}(\w)+\Phi_{\lambda}(\w)\|_{\H}^2.
\end{align}
Analogous to \eqref{a3} (for the solution $\y_{\lambda}$ of \eqref{Y1} with $\f\in\H$), it yields from \eqref{3.7171} that the solution $\w$ of \eqref{3.725} with $\g_{\lambda}\in\H$ satiesfies \eqref{3.3838}.
Now, for $\w\in\mathrm{D}(\mathrm{A})\cap\mathrm{D}(\Phi)$ and $\xi\in\Phi(\w)$, let  $$\g=\mu\mathrm{A}\w+\mathcal{B}(\w)+\beta\mathcal{C}(\w)+\xi+\widetilde{\kappa}\w.$$ Since $\g\in\H$, we obtain a sequence $\{\w_{\lambda}\}_{\lambda>0}\subset \H$ such that  $\w_{\lambda}$ is a solution of 
\begin{align*}
\mu\mathrm{A}\w_{\lambda}+\mathcal{B}(\w_{\lambda})+\beta\mathcal{C}(\w_{\lambda})+\Phi_{\lambda}(\w_{\lambda})+\widetilde{\kappa}\w_{\lambda}=\g, \ \text{ for all }\ \lambda>0. 
\end{align*}
Then as similar to Step III, we get $\w_{\lambda}\to\w$ in $\V$ and $\mathrm{A}\w_{\lambda}\rightharpoonup\mathrm{A}\w$ in $\H$. Now we calculate the estimate $\|\A\w_\lambda\|_{\H}^2$ as we calculate above and then passing the limit as $\lambda\to 0$, we obtain
\begin{align*}
\|\mathrm{A}\w\|_{\H}^{2}\leq
C(1+\|\w\|_{\H}^2+\|\mu\mathrm{A}\w+\mathcal{B}(\w)+\beta\mathcal{C}(\w)+\xi\|_{\H}^2)^\vartheta,
\end{align*}
where $\vartheta$ is defined as in \eqref{vartheta} and this completes the proof of \eqref{3.3939}. 
\end{proof}

\begin{remark}
{	It can be easily seen from \eqref{359} that $\|\mathcal{C}(\y_{\lambda})\|_{\H}$ is bounded uniformly. 
 Now, if we take the inner product in \eqref{Y2} with $\mathcal{C}(\y_\lambda)$, then we have 
\begin{align}\label{rev1}
	&\mu(\mathrm{A}\y_{\lambda},\mathcal{C}(\y_{\lambda}))+\beta\|\mathcal{C}(\y_{\lambda})\|_{\H}^2 +\widetilde{\kappa}(\y_{\lambda},\mathcal{C}(\y_{\lambda}))\nonumber\\&=(\f,\mathcal{C}(\y_{\lambda})) 	-(\mathcal{B}(\y_{\lambda}),\mathcal{C}(\y_{\lambda}))-(\Phi_{\lambda}(\y_{\lambda}),\mathcal{C}(\y_{\lambda})).
\end{align}
By performing similar calculations as above,  one can deduce for some $\rho>0$
\begin{align}\label{rev2}
	\frac{\mu}{2}\||\y_{\lambda}|^{\frac{r-1}{2}}|\nabla\y_{\lambda}|\|_{\H}^{2}+\frac{\beta}{2}\|\mathcal{C}(\y_{\lambda})\|_{\H}^{2}+\widetilde{\kappa}\|\y_\lambda\|_{\wi\L^{r+1}}^{r+1}\leq \frac{2}{\beta}\|\f\|_{\H}^{2}+\frac{2}{\beta}\|\Phi_{\lambda}(\y_{\lambda})\|_{\H}^{2}+\rho\|\nabla\y_\lambda\|_{\H}^2.
\end{align}
We are not gaining any additional regularity results from the above relation,  as the estimate depends on the bounds  of  $\|\Phi_{\lambda}(\y_{\lambda})\|_{\H}$. 
}
\end{remark}

From Proposition \ref{prop33}-\ref{prop3.3} and \cite[Theorems 1.4-1.6, pp. 214-216]{VB2}, we have following immediate result:
	\begin{proposition}\label{thm1.1}
		Let $T>0$ and assume that $\Phi\subset\H\times\H$ satisfies Hypothesis \ref{hyp1}. Let $\y_0\in\D(\A)\cap\D(\Phi)$ and $\f \in \W^{1,1}(0,T;\H)$. For $d=2$ with $r\in[3,\infty)$ and $d=3$ with $r\in[3,\infty)$ ($2\beta\mu\geq1$ for $r=3$), there exists a unique strong solution 
		\begin{align}\label{1p.4}
			\y\in \W^{1,\infty}(0,T;\H)\cap\mathrm{L}^{\infty}(0,T;\D(\A))\cap \C([0,T];\V),
		\end{align} such that in $\H$
		\begin{equation}\label{1p4}
			\left\{
			\begin{aligned}
				\frac{\d \y(t)}{\d t}+\mu\A\y(t)+\mathcal{B}(\y(t))+\beta\mathcal{C}(\y(t))+\Phi(\y(t))&\ni \f(t), \ \text{ a.e. } \ t\in[0,T], \\
				\y(0)&=\y_0. 
			\end{aligned}
			\right.
		\end{equation}
		Furthermore, $\y$ is right differentiable, $\frac{\d^+\y}{\d t} $ is right continuous, and 
		\begin{equation}\label{1p5}
			\frac{\d^+ \y(t)}{\d t}+\left(\mu\A\y(t)+\mathcal{B}(\y(t))+\beta\mathcal{C}(\y(t))+\Phi(\y(t))-\f(t)\right)^0= \mathbf{0}, \ \text{ for all  } \ t\in[0,T),
		\end{equation}
		{where $\left(-\f+\mu\A\y+\mathcal{B}(\y)+\beta\mathcal{C}(\y)+\Phi(\y)\right)^0$ denotes the minimal selection  of the multi-valued map $\y\mapsto(-\f+\mu\A\y+\mathcal{B}(\y)+\beta\mathcal{C}(\y)+\Phi(\y))$, that is, $\left(-\f+\mu\A\y+\mathcal{B}(\y)+\beta\mathcal{C}(\y)+\Phi(\y)\right)^0$ is the element of minimum norm in $\left(-\f+\mu\A\y+\mathcal{B}(\y)+\beta\mathcal{C}(\y)+\Phi(\y)\right)$ (see \cite[Reamrk, pp. 76]{VB2}).} 
\end{proposition}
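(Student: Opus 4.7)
\textbf{Proof plan for Proposition \ref{thm1.1}.} The plan is to view \eqref{1p4} as an abstract Cauchy problem driven by the quasi $m$-accretive operator $\mathfrak{A}-\kappa\I$ on the Hilbert space $\H$, and then invoke the Br\'ezis--Barbu existence theory of \cite[Theorems 1.4--1.6, pp.~214--216]{VB2}. Writing \eqref{1p4} in the equivalent form
\begin{equation*}
\y'(t)+\mathfrak{A}(\y(t))\ni \f(t)+\kappa\y(t),\qquad \y(0)=\y_0,
\end{equation*}
where $\mathfrak{A}=\mu\A+\mathcal{B}(\cdot)+\beta\mathcal{C}(\cdot)+\Phi(\cdot)+\kappa\I$ is the maximal monotone operator of Proposition \ref{prop3.3} with $\D(\mathfrak{A})=\D(\A)\cap\D(\Phi)$, the hypotheses $\y_0\in\D(\mathfrak{A})$ and $\f\in\W^{1,1}(0,T;\H)$ are in place. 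The bounded Lipschitz perturbation $\y\mapsto\kappa\y$ is absorbed either by a standard Picard fixed-point argument or, more cleanly, by working directly with the operator $\mathfrak{A}-\kappa\I$, which is $\kappa$-accretive with the same domain as $\mathfrak{A}$.

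Barbu's theorem then yields a unique strong solution $\y\in\W^{1,\infty}(0,T;\H)$ such that $\y(t)\in\D(\mathfrak{A})$ for every $t\in[0,T]$, together with right-differentiability at every $t\in[0,T)$ and the minimal-selection formula \eqref{1p5} for $\d^+\y/\d t$. To upgrade the pointwise membership $\y(t)\in\D(\mathfrak{A})$ to the quantitative bound $\y\in\L^{\infty}(0,T;\D(\A))$, I would apply the a priori estimate \eqref{3.3939} of Proposition \ref{prop3.3} pointwise in time. Indeed, since $\mathfrak{A}$ is maximal monotone and $\f-\y'$ is Bochner measurable, there is a measurable selection $\xi(\cdot)\in\Phi(\y(\cdot))$ with
\begin{equation*}
\mu\A\y(t)+\mathcal{B}(\y(t))+\beta\mathcal{C}(\y(t))+\xi(t)=\f(t)-\y'(t)\quad\text{for a.e. } t\in[0,T].
\end{equation*}
The $\W^{1,\infty}(0,T;\H)$-regularity of $\y$ and the embedding $\W^{1,1}(0,T;\H)\hookrightarrow\C([0,T];\H)$ together give $\f-\y'\in\L^{\infty}(0,T;\H)$, so \eqref{3.3939} furnishes the uniform bound $\|\A\y(t)\|_{\H}\leq C$.

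The continuity $\y\in\C([0,T];\V)$ follows by interpolation: the inequality $\|\v\|_{\V}^{2}\leq C\|\v\|_{\H}(\|\v\|_{\H}+\|\A\v\|_{\H})$ combined with $\y\in\C([0,T];\H)$ (a consequence of $\W^{1,\infty}$-regularity) and the $\L^{\infty}(0,T;\D(\A))$-bound just derived gives strong continuity into $\V$. Uniqueness is already supplied by Barbu's theorem, but can also be read off directly from the $\kappa$-accretivity of $\mathfrak{A}-\kappa\I$: subtracting the inclusions for two solutions, pairing with $\y_1-\y_2$ in $\H$, and invoking the monotonicity established in Proposition \ref{prop33} yields a Gronwall estimate that forces $\y_1\equiv\y_2$.

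The only delicate point I anticipate is the pointwise bootstrap to $\L^{\infty}(0,T;\D(\A))$: one must make sure that the measurable selection $\xi(\cdot)$ exists and that \eqref{3.3939} is genuinely applicable at almost every $t$; this is why the full strength of the estimate in Proposition \ref{prop3.3} (rather than just its Yosida version \eqref{3.3838}) is crucial. Everything else is a mechanical application of the abstract theory, so no new analytic difficulty beyond what has already been handled in Propositions \ref{prop33}--\ref{prop3.3} enters here.
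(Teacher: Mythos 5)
Your proposal is correct and follows essentially the same route as the paper, which derives this proposition directly from the $m$-accretivity results of Propositions \ref{prop33}--\ref{prop3.3} together with \cite[Theorems 1.4--1.6, pp.~214--216]{VB2}. The details you supply — absorbing the shift $\kappa\I$, bootstrapping $\y\in\mathrm{L}^{\infty}(0,T;\D(\A))$ from the pointwise estimate \eqref{3.3939} applied to the selection $\xi(t)=\f(t)-\y'(t)-\mu\A\y(t)-\mathcal{B}(\y(t))-\beta\mathcal{C}(\y(t))$, and obtaining $\C([0,T];\V)$ by interpolation — are exactly the steps the paper leaves implicit, and \eqref{3.3939} is indeed proved in Proposition \ref{prop3.3} for precisely this purpose.
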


A similar result holds for the system \eqref{1p4}, when one replaces $\Phi$ with the Yosida approximation $\Phi_\lambda$. 

\begin{proposition}\label{prop3.5}
	Let $\Phi\subset \H\times \H$ satisfy Hypothesis \ref{hyp1}. Let $\f\in\W^{1,1}(0,T;\H)$ and $\y_0\in\D(\A)\cap\D(\Phi)$. Then there exists a unique strong solution 
	\begin{align}\label{355}
		\y_\lambda\in \W^{1,\infty}(0,T;\H)\cap\mathrm{L}^{\infty}(0,T;\D(\A))\cap \C([0,T];\V)
	\end{align}
	to the problem 
	\begin{equation}\label{3.29}
		\left\{
		\begin{aligned}
			\frac{\d\y_\lambda(t)}{\d t}+\mu\A\y_\lambda(t)+\mathcal{B}(\y_\lambda(t))+\beta\mathcal{C}(\y_\lambda(t))+\Phi_{\lambda}(\y_\lambda(t))&=\f(t), \ \text{ a.e. } \ t\in[0,T], \\
			\y_\lambda(0)&=\y_0. 
		\end{aligned}
		\right.
	\end{equation}
	
	Furthermore, $\y_\lambda$ is right differentiable, $\frac{\d^+\y_\lambda}{\d t} $ is right continuous, and 
	\begin{equation}\label{3p55}
		\frac{\d^+\y_\lambda(t)}{\d t}+\mu\A\y_\lambda(t)+\mathcal{B}(\y_\lambda(t))+\beta\mathcal{C}(\y_\lambda(t))+\Phi_{\lambda}(\y_\lambda(t))= \f(t), \ \text{ for all  } \ t\in[0,T). 
	\end{equation}
\end{proposition}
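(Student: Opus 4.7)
The plan is to reduce Proposition \ref{prop3.5} directly to Proposition \ref{thm1.1} by checking that the Yosida approximation $\Phi_{\lambda}$ itself satisfies Hypothesis \ref{hyp1}, so that the strong-solution theorem for the inclusion applies with $\Phi$ replaced by $\Phi_{\lambda}$. Since $\Phi_{\lambda}$ is single-valued and everywhere defined on $\H$, the resulting inclusion automatically collapses to the equation \eqref{3.29}, and the minimal-selection identity \eqref{1p5} becomes \eqref{3p55}.

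First I would recall the standard properties of $\Phi_{\lambda}$: it is single-valued, Lipschitz continuous on $\H$ with constant $1/\lambda$, monotone, and equals the Gateaux derivative of the Moreau-Yosida regularization
\begin{equation*}
\varphi_{\lambda}(\x) := \inf_{\z\in\H}\left\{\frac{1}{2\lambda}\|\x-\z\|_{\H}^{2} + \varphi(\z)\right\},
\end{equation*}
which is convex and continuously differentiable on $\H$. This gives (H.1) (with $\varphi_{\lambda}$ in place of $\varphi$) and (H.2) (since $D(\Phi_{\lambda})=\H\ni\boldsymbol{0}$). For (H.3) applied to $\tilde\Phi:=\Phi_{\lambda}$, I would invoke the classical identity $(\Phi_{\lambda})_{\nu} = \Phi_{\lambda+\nu}$ for $\nu>0$ (see, e.g., \cite[Chapter II]{VB1}); the required lower bound
\begin{equation*}
(\A\y,(\Phi_{\lambda})_{\nu}(\y)) = (\A\y,\Phi_{\lambda+\nu}(\y)) \geq -\gamma(1+\|\y\|_{\H}^{2}) - \varsigma\|\Phi_{\lambda+\nu}(\y)\|_{\H}^{2}
\end{equation*}
then holds for every $\y\in D(\A)$ and $\nu>0$ because the corresponding inequality is assumed for $\Phi$ at the parameter $\lambda+\nu>0$.

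With Hypothesis \ref{hyp1} verified for $\Phi_{\lambda}$, Proposition \ref{thm1.1} applied to $\Phi_{\lambda}$ produces a unique strong solution $\y_{\lambda}$ with the regularity \eqref{355}, together with the right-differentiability and minimal-selection statement. Because $\Phi_{\lambda}$ is single-valued, the inclusion is the equation \eqref{3.29}, and the minimal selection equals $\Phi_{\lambda}(\y_{\lambda})$ itself, yielding \eqref{3p55}. The compatibility of the initial datum is immediate from $\y_{0}\in D(\A)\cap D(\Phi)\subset D(\A) = D(\A)\cap D(\Phi_{\lambda})$.

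The only mild obstacle is the identity $(\Phi_{\lambda})_{\nu}=\Phi_{\lambda+\nu}$, which is a standard fact for Yosida approximations of maximal monotone operators. An alternative avoiding this identity would be to exploit the fact that $\Phi_{\lambda}$ is single-valued, monotone, and globally Lipschitz on $\H$, and hence the operator $\mu\A+\mathcal{B}(\cdot)+\beta\mathcal{C}(\cdot)+\Phi_{\lambda}(\cdot)+\kappa\I$ with domain $D(\A)$ is $m$-accretive on $\H$ by the Lipschitz-perturbation step (mimicking Step IV of Proposition \ref{prop33} with $\mathcal{F}^{2}$ enlarged by $\Phi_{\lambda}$); then the Brezis-Barbu theory \cite[Chapter 4]{VB2} delivers existence, uniqueness, the regularity \eqref{355}, and the right-derivative formula \eqref{3p55} directly.
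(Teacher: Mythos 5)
Your proposal is correct. The paper itself gives no explicit argument for Proposition \ref{prop3.5} beyond the remark that the result ``holds when one replaces $\Phi$ with $\Phi_\lambda$,'' and the intended proof is exactly your alternative route: inside Step I of the proof of Proposition \ref{prop3.3} the paper already records that $\Phi_\lambda$ is single-valued, monotone and demicontinuous on all of $\H$, so that $\mathcal{F}(\cdot)+\Phi_\lambda(\cdot)$ is maximal monotone (quasi-$m$-accretive) with domain $\D(\A)$, and the estimate \eqref{3.3838} is stated precisely for $\Phi_\lambda$ and for every $\lambda>0$ so that the abstract theorems \cite[Theorems 1.4--1.6]{VB2} yield the regularity \eqref{355} together with the right-derivative formula, which collapses to \eqref{3p55} because the operator is single-valued. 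Your primary route --- verifying Hypothesis \ref{hyp1} for $\tilde\Phi=\Phi_\lambda$ and invoking Proposition \ref{thm1.1} as a black box --- is a clean and slightly more modular repackaging; its only nonstandard ingredient is the resolvent identity $(\Phi_\lambda)_\nu=\Phi_{\lambda+\nu}$, which is indeed classical and correctly used to transfer (H.3) (note the constants $\gamma,\varsigma$ are inherited uniformly in $\lambda$, which is what makes the later $\lambda$-independent estimates of Proposition \ref{prop4.1} possible). Either way the argument is complete; the paper's implicit route simply avoids the resolvent identity by perturbing the $m$-accretive operator $\mathcal{F}$ directly by the everywhere-defined continuous monotone map $\Phi_\lambda$.
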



\section{Proof of Theorem \ref{thm1.2}} \label{sec44}\setcounter{equation}{0}
The aim of this section is to prove the Theorem \ref{thm1.2} using the solvability results obtained in Proposition \ref{thm1.1}. We first provide some uniform energy estimates for the solutions of the problem \eqref{3.29}. 
\subsection{Energy estimates for the solution of the problem \eqref{3.29}} From the abstarct result Proposition \ref{thm1.1}, we infer that the problem \eqref{1p4} has a unique strong solution with the regularity given in \eqref{1p.4}. Our aim in this subsection is to obtain some  energy estimates for the solution of the problem \eqref{1p7} so that we can pass to the limit. In order to do this, we first  obtain suitable energy estimates for   the solution $\y_{\lambda}(\cdot)$ for the approximate problem \eqref{3.29}, which also has a unique strong solution with the regularity given in \eqref{1p.4}. 
\begin{proposition}\label{prop4.1}
	Let $\y_\lambda(\cdot)$  be the unique strong solution of the problem \eqref{3.29} obtained in Proposition \ref{prop3.5}. Then for $\f\in\W^{1,1}(0,T;\H)$ and $\y_0\in\D(\A)\cap\D(\Phi)$, the solution $\y_\lambda(\cdot)$   satisfies the following energy estimates: 
	\begin{align}\label{356}
&	\sup_{t\in[0,T]}\|\y_\lambda(t)\|_{\H}^2+\mu\int_0^T\|\nabla\y_\lambda(t)\|_{\H}^2\d t+\beta\int_0^T\|\y_\lambda(t)\|_{\wi\L^{r+1}}^{r+1}\d t\nonumber\\&\leq C\left(T, \|\y_0\|_{\H}, \|\f\|_{\mathrm{L}^2(0,T;\H)},\|\Phi(\boldsymbol{0})\|_{\H}\right), 
	\end{align}
where $C$ is independent of $\lambda$. Furthermore, we have 
	\begin{align}\label{357}
	&	\sup_{t\in[0,T]}	\|\nabla\y_\lambda(t)\|_{\H}^2+\mu\int_0^T\|\A\y_\lambda(t)\|_{\H}^2\d t+\beta\int_0^T\||\nabla \y_{\lambda}(t)||\y_{\lambda}(t)|^{\frac{r-1}{2}}\|_{\H}\d t\nonumber\\&\leq C\left(\mu,\beta,T,\|\A\y_0\|_{\H},\varphi(\y_0),\|\Phi(\y_0)\|_{\H},\|\Phi(\boldsymbol{0})\|_{\H},\|\f(0)\|_{\H},\|\f\|_{\mathrm{L}^2(0,T;\H)}\right), 
\end{align}
where $C$ is independent of $\lambda$.
\end{proposition}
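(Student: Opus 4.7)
The plan is to establish \eqref{356} by a standard testing against $\y_\lambda$, and \eqref{357} by a two-stage bootstrap: first upgrade to $\mathrm{L}^\infty(0,T;\H)$ control of $\tfrac{d\y_\lambda}{dt}$ via a finite-difference trick, and then combine the identity obtained by testing against $\A\y_\lambda$ with the $\lambda$-independent regularity bound \eqref{3.3838} of Proposition \ref{prop3.3}. For \eqref{356}, test \eqref{3.29} in $\H$ against $\y_\lambda(t)$: the bilinear term vanishes by \eqref{b0}, the damping produces $\beta\|\y_\lambda\|_{\wi\L^{r+1}}^{r+1}$, and the monotonicity of $\Phi_\lambda$ together with $\boldsymbol{0}\in\D(\Phi)$ yields $(\Phi_\lambda(\y_\lambda),\y_\lambda)\geq(\Phi_\lambda(\boldsymbol{0}),\y_\lambda)\geq -\|\Phi(\boldsymbol{0})\|_{\H}\|\y_\lambda\|_{\H}$ via the standard bound $\|\Phi_\lambda(\boldsymbol{0})\|_{\H}\leq\|\Phi(\boldsymbol{0})\|_{\H}$. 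Young's inequality on $(\f,\y_\lambda)$ and Gronwall's lemma then deliver \eqref{356} uniformly in $\lambda$.

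For the time-derivative upgrade, subtract \eqref{3.29} at $t+h$ and $t$, divide by $h$, and test against $\delta_h\y_\lambda:=(\y_\lambda(t+h)-\y_\lambda(t))/h$. The Yosida contribution is nonnegative by monotonicity of $\Phi_\lambda$, the Forchheimer contribution provides $\tfrac{\beta}{2}\||\y_\lambda(t)|^{(r-1)/2}\delta_h\y_\lambda\|_{\H}^2$ via \eqref{2.23}, and the bilinear difference is controlled by reusing \eqref{2.30} with $\y=\y_\lambda(t+h)$, $\z=\y_\lambda(t)$, so that after absorbing the cross terms into viscosity and Forchheimer only $\varrho\|\delta_h\y_\lambda\|_{\H}^2$ survives. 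Gronwall's inequality and the passage $h\downarrow 0$, using $\f\in\mathrm{W}^{1,1}(0,T;\H)$ for the difference quotients of $\f$ and $\y_0\in\D(\A)\cap\D(\Phi)$ with $\|\Phi_\lambda(\y_0)\|_{\H}\leq\|\Phi(\y_0)\|_{\H}$ to evaluate $\tfrac{d\y_\lambda}{dt}(0)$ from \eqref{3p55}, supply $\lambda$-uniform bounds on $\sup_t\|\tfrac{d^+\y_\lambda}{dt}\|_{\H}$ and $\int_0^T\|\tfrac{d\nabla\y_\lambda}{dt}\|_{\H}^2\,dt$. Rewriting \eqref{3.29} as $\mu\A\y_\lambda+\mathcal{B}(\y_\lambda)+\beta\mathcal{C}(\y_\lambda)+\Phi_\lambda(\y_\lambda)=\f-\tfrac{d\y_\lambda}{dt}$ with uniformly bounded right-hand side in $\H$, the estimate \eqref{3.3838} then yields $\sup_t\|\A\y_\lambda(t)\|_{\H}\leq C$ uniformly in $\lambda$.

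To extract the remaining ingredients of \eqref{357}, test \eqref{3.29} against $\A\y_\lambda$, invoking \eqref{3} to produce the positive contribution $\beta\||\y_\lambda|^{(r-1)/2}\nabla\y_\lambda\|_{\H}^2$ from $(\mathcal{C}(\y_\lambda),\A\y_\lambda)$, the absorption \eqref{373} (or the $r=3$, $2\beta\mu\geq 1$ variant \eqref{r3e}) for $(\mathcal{B}(\y_\lambda),\A\y_\lambda)$, and Hypothesis \ref{hyp1}(H.3) for the lower bound on $(\Phi_\lambda(\y_\lambda),\A\y_\lambda)$: this produces a differential inequality in $\|\nabla\y_\lambda\|_{\H}^2$ that integrates to \eqref{357} once the residual $\varsigma\|\Phi_\lambda(\y_\lambda)\|_{\H}^2$ is controlled. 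When $\varsigma>0$, test \eqref{3.29} additionally against $\Phi_\lambda(\y_\lambda)$ and use the chain rule $(\tfrac{d\y_\lambda}{dt},\Phi_\lambda(\y_\lambda))=\tfrac{d}{dt}\varphi_\lambda(\y_\lambda)$ for the Moreau--Yosida regularization $\varphi_\lambda$ of $\varphi$, so that the time-derivative term contributes $\varphi_\lambda(\y_\lambda(t))-\varphi_\lambda(\y_0)$ with $\varphi_\lambda(\y_0)\leq\varphi(\y_0)$; combined with the pointwise $\|\A\y_\lambda\|_{\H}$ bound just obtained and the factor $(1-\mu\varsigma)>0$ in front of $\|\Phi_\lambda\|_{\H}^2$, this delivers $\int_0^T\|\Phi_\lambda(\y_\lambda)\|_{\H}^2\,dt\leq C$ and closes \eqref{357}.

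The main obstacle is the lack of a Gateaux derivative for the composition $t\mapsto\Phi_\lambda(\y_\lambda(t))$, which precludes differentiating \eqref{3.29} in time and testing directly against $\tfrac{d\y_\lambda}{dt}$. The finite-difference device above circumvents this using only the monotonicity of $\Phi_\lambda$ and is precisely what forces the stronger hypotheses $\f\in\mathrm{W}^{1,1}(0,T;\H)$ and $\y_0\in\D(\A)\cap\D(\Phi)$ rather than the weaker $\mathrm{L}^2$-in-time and $\V$-in-space assumptions that would suffice for Navier--Stokes without potential.
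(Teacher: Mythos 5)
Your proposal is correct and follows the paper's proof in all essential respects: Step I (testing against $\y_\lambda$, monotonicity of $\Phi_\lambda$ and $\|\Phi_\lambda(\boldsymbol{0})\|_{\H}\leq\|\Phi(\boldsymbol{0})\|_{\H}$, Gronwall), the finite-difference device in time exploiting only the monotonicity of $\Phi_\lambda$ (the paper's Step II), the test against $\A\y_\lambda$ with equality \eqref{3} and Hypothesis \ref{hyp1}(H.3) (Step III), and the test against $\Phi_\lambda(\y_\lambda)$ with the Moreau--Yosida chain rule $(\frac{\d\y_\lambda}{\d t},\Phi_\lambda(\y_\lambda))=\frac{\d}{\d t}\varphi_\lambda(\y_\lambda)$ to absorb the residual $\varsigma\|\Phi_\lambda(\y_\lambda)\|_{\H}^2$ (Step IV). The one genuine deviation is your intermediate use of \eqref{3.3838}: since the finite-difference argument gives $\sup_t\|\frac{\d^+\y_\lambda}{\d t}\|_{\H}\leq C$ uniformly in $\lambda$, you rewrite \eqref{3.29} as $\mu\A\y_\lambda+\mathcal{B}(\y_\lambda)+\beta\mathcal{C}(\y_\lambda)+\Phi_\lambda(\y_\lambda)=\f-\frac{\d\y_\lambda}{\d t}$ and read off a pointwise-in-time uniform bound on $\|\A\y_\lambda(t)\|_{\H}$. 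The paper never invokes \eqref{3.3838} inside this proof; it instead derives $\int_0^T\|\frac{\d\y_\lambda}{\d t}\|_{\H}^2\,\d t\leq C$ by testing against $\frac{\d\y_\lambda}{\d t}$ and then closes the loop by combining the integrated inequalities \eqref{3104} and \eqref{3.105.}, absorbing $\frac{\mu}{4}\int_0^t\|\A\y_\lambda\|_{\H}^2\,\d s$ between the two. Your shortcut is legitimate (Proposition \ref{prop3.3} is proved beforehand and its constant is $\lambda$-independent), buys the stronger conclusion $\A\y_\lambda\in\mathrm{L}^\infty(0,T;\H)$ uniformly in $\lambda$, and makes the absorption in Step IV easier; the paper's route avoids relying on the exponent $\vartheta$ of \eqref{vartheta} at this stage. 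Two small details you should still supply: the chain-rule term $\varphi_\lambda(\y_\lambda(t))-\varphi_\lambda(\y_0)$ must be bounded \emph{below}, which requires not only $\varphi_\lambda(\y_0)\leq\varphi(\y_0)$ but also $\varphi_\lambda(\y_\lambda(t))\geq\varphi(\J_\lambda\y_\lambda(t))\geq -C$ via the affine minorant of $\varphi$ and the boundedness of $\J_\lambda=(\I+\lambda\Phi)^{-1}$ on bounded sets (the paper's \eqref{e.12}--\eqref{e.14}); and the finite-difference step for $d=r=3$ with $2\beta\mu\geq 1$ needs the separate absorption with coefficient $\mu-\frac{1}{2\beta}$ rather than \eqref{2.30}, since $\varrho$ is only defined for $r>3$.
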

\begin{remark}
	Since $\f\in\mathrm{W}^{1,1}(0,T;\H)$, $\f$ is absolutely continuous and hence $\f\in\C([0,T];\H)$.
\end{remark}
\begin{proof}[Proof of Proposition \ref{prop4.1}] We prove \eqref{356} and \eqref{357} in the following steps:
	\vskip 0.2 cm
	\noindent 
	\textbf{Step I:} \textsl{Proof of \eqref{356}.}
 Taking the inner product with $\y_\lambda$ in \eqref{3.29}, we obtain 
\begin{align}\label{e.1}
	\frac{1}{2}\frac{\d}{\d t}\|\y_\lambda(t)\|_{\H}^{2}+\mu\|\nabla\y_\lambda(t)\|_{\H} ^{2} +\beta\|\y_\lambda(t)\|_{\wi\L^{r+1}}^{r+1} + (\Phi_{\lambda}(\y_\lambda(t)),\y_\lambda(t))  &= (\f(t),\y_\lambda(t)),
\end{align}
for a.e. $t\in[0,T]$, where we have used the fact that $(\mathcal{B}(\y_\lambda),\y_\lambda)=0$. {Using an estimate similar to \eqref{m}} and Cauchy-Schwarz, Young's and Gronwall's inequalities in \eqref{e.1}, we deduce 
\begin{align}\label{e.3}
	&	\|\y_\lambda(t)\|_{\H}^{2}+\mu\int_0^t\|\y_\lambda(s)\|_{\V} ^{2} \d s+2\beta\int_0^t\|\y_\lambda(s)\|_{\wi\L^{r+1}}^{r+1}\d s \leq C,
\end{align}	
for all $t\in[0,T]$, where
\begin{align*}
	C=\|\y_0\|_{\H}^2+
	2\left(\int_0^T\|\f(s)\|_{\H}^{2}\d s+T\|\Phi(\boldsymbol{0})\|_{\H}^{2}\right) e^T.
\end{align*}
	\vskip 0.2 cm
\noindent 
\textbf{Step II:} \textsl{Regularity estimates.} In order to obtain the energy estimate \eqref{357}, we first need further regularity estimates on the solution.  This is due to the (H.3) assumption in Hypothesis \ref{hyp1}. We observe that $\y_\lambda(\cdot)$ satisfies  for any $h>0$
\begin{align*}
\frac{\d\y_\lambda(t+h)}{\d t}+\mu\A\y_\lambda(t+h)+\mathcal{B}(\y_\lambda(t+h))+\beta\mathcal{C}(\y_\lambda(t+h))+\Phi_{\lambda}(\y_\lambda(t+h))&=\f(t+h), 	
\end{align*}
for a.e. $t\in [0,T]$. Then subtracting above equation  from \eqref{3.29}, and taking the inner product with $\y_\lambda(\cdot+h)-\y_{\lambda}(\cdot)$ and then using \eqref{2.23}, we get for a.e. $t\in [0,T]$
\begin{align}\label{d.1}
&\frac{1}{2}\frac{\d}{\d t}\|\y_\lambda(t+h)-\y_\lambda(t)\|_{\H}^2+\mu\|\nabla(\y_\lambda(t+h)-\y_\lambda(t))\|_{\H}^2+\frac{\beta}{2}\||\y(t)|^{\frac{r-1}{2}}(\y(t+h)-\y(t))\|_{\H}^2\nonumber\\&\leq (\f(t+h)-\f(t),\y_\lambda(t+h)-\y_\lambda(t))-(\mathcal{B}(\y_\lambda(t+h))-\mathcal{B}(\y_\lambda(t)),\y_\lambda(t+h)-\y_\lambda(t)), 
\end{align} 
where we have used the monotonicity property of the Yosida approximation $\Phi_{\lambda}(\cdot)$. We consider the follwing cases: 

\vskip 0.2cm
\textsl{For $r>3.$} From \eqref{2.30} and the Cauchy-Schwarz inequality, \eqref{d.1} yields 
\begin{align}\label{mr1}
&(\|\y_\lambda(t+h)-\y_\lambda(t)\|_{\H})\frac{\d}{\d t} \|\y_\lambda(t+h)-\y_\lambda(t)\|_{\H} + \frac{\mu}{2}\|\nabla(\y_\lambda(t+h)-\y_\lambda(t))\|_{\H}^2\nonumber\\&\leq
\|\f(t+h)-\f(t)\|_{\H}\|\y_\lambda(t+h)-\y_\lambda(t)\|_{\H}+\varrho\|\y_\lambda(t+h)-\y_\lambda(t)\|_{\H}^2,
\end{align}
or we can write 
\begin{align*}
\frac{\d}{\d t}\|\y_\lambda(t+h)-\y_\lambda(t)\|_{\H}\leq 
\|\f(t+h)-\f(t)\|_{\H}+\varrho\|\y_\lambda(t+h)-\y_\lambda(t)\|_{\H},
\end{align*}
for a.e. $t\in[0,T]$. By Gronwall's inequality, we have 
\begin{align*}
\|\y_\lambda(t+h)-\y_\lambda(t)\|_{\H}\leq e^{\varrho t}	\left(\|\y_\lambda(h)-\y_\lambda(0)\|_{\H}+\int_0^t \|\f(s+h)-\f(s)\|_{\H} \d s\right),
\end{align*}
for all $t\in[0,T]$. On dividing by $h$ and then taking limit as $h\to 0$, we obtain  for all $t\in[0,T]$
\begin{align}\label{mr2}
	\left\|\frac{\d^+\y_\lambda(t)}{\d t}\right\|_{\H}&\leq e^{\varrho T}	\left(\left\|\frac{\d^+\y_\lambda(0)}{\d t}\right\|_{\H}+\int_0^T\left\|\frac{\d\f}{\d t}(t)\right\|_{\H} \d t \right)\nonumber\\&\leq e^{\varrho T}	\left(\mu\|\A\y_\lambda(0)\|_{\H}+\|\mathcal{B}(\y_\lambda(0))\|_{\H}+\beta\|\mathcal{C}(\y_\lambda(0))\|_{\H}+\|\Phi_{\lambda}(\y_\lambda(0))\|_{\H}\right.\nonumber\\&\quad\left.+\|\f(0)\|_{\H}+\|\f\|_{\W^{1,1}(0,T;\H)}\right)\nonumber\\&\leq C\left(\mu,\beta,T,\|\A\y_0\|_{\H},\|\Phi(\y_0)\|_{\H},\|\f(0)\|_{\H},\|\f\|_{\W^{1,1}(0,T;\H)}\right),
\end{align}
where we have used \eqref{3p55} and the fact that $\f\in\W^{1,1}(0,T;\H)$ implies $\f\in\C([0,T];\H)$.  Now on integrating \eqref{mr1}, we obtain
\begin{align*}
	\mu\int_0^t \|\nabla(\y_\lambda(s+h)-\y_\lambda(s))\|_{\H}^2 \d s&\leq \|\y_\lambda(h)-\y_\lambda(0)\|_{\H}^2+2\varrho\int_0^t \|\y_\lambda(s+h)-\y_\lambda(s)\|_{\H}^2 \d s\nonumber\\&\quad+2\int_0^t \|\f(s+h)-\f(s)\|_{\H}\|\y_\lambda(s+h)-\y_\lambda(s)\|_{\H}\d s.
\end{align*}
On dividing both sides by $h^2$ and then passing limit as $h\to 0$, we get
\begin{align}\label{3.8989}
\mu\int_0^t \left\|\frac{\d(\nabla\y_\lambda(s))}{\d s}\right\|_{\H}^2 \d s \leq\left\|\frac{\d^+\y_\lambda(0)}{\d t}\right\|_{\H}^2+2\varrho\int_0^t\left\|\frac{\d^+\y_\lambda(s)}{\d t}\right\|_{\H}^2\d s+ \int_0^t\left\|\frac{\d\f}{\d t}(s)\right\|_{\H}\left\|\frac{\d^+\y_\lambda(s)}{\d t}\right\|_{\H} \d s,
\end{align}
for all $t\in[0,T]$. By using \eqref{mr2}, it implies that 
\begin{align*}
\int_0^T \left\|\frac{\d(\nabla\y_\lambda(s))}{\d s}\right\|_{\H}^2 \d s\leq C\left(\mu,\beta,T,\|\A\y_0\|_{\H},\|\Phi(\y_0)\|_{\H},\|\f(0)\|_{\H},\|\f\|_{\W^{1,1}(0,T;\H)}\right).
\end{align*}

\vskip 0.2cm
\textsl{For $r=3$ with $2\beta\mu\geq1.$}  Once again by using Cauchy-Schwarz and Young's inequalities, one can obtain the following:
\begin{align*}
	&\frac{\d}{\d t}\|\y_\lambda(t+h)-\y_\lambda(t)\|_{\H}^2+2\left(\mu-\frac{1}{2\beta}\right)\|\nabla(\y_\lambda(t+h)-\y_\lambda(t))\|_{\H}^2\nonumber\\&\leq
	\|\f(t+h)-\f(t)\|_{\H}^2+\|\y_\lambda(t+h)-\y_\lambda(t)\|_{\H}^2.	
\end{align*}
Using the similar calculations as we have done in the case $r>3$, we get the similar estimates as in \eqref{3.8989}. Taking the inner product with $\frac{\d \y_\lambda}{\d t}$ in \eqref{3.29} and then using the Cauchy-Schwarz and Young's inequalities, we get for a.e. $t\in[0,T]$
\begin{align}\label{389}
	\left\|\frac{\d \y_\lambda(t)}{\d t}\right\|_{\H}^2&+\frac{\mu}{2}\frac{\d}{\d t} \|\nabla\y_\lambda(t)\|_{\H}^2+\frac{\beta}{r+1}\frac{\d}{\d t}\|\y_\lambda(t)\|_{\wi\L^{r+1}}^{r+1}+\left(\frac{\d \y_\lambda(t)}{\d t},\Phi_{\lambda}(\y_\lambda(t))\right)\nonumber\\&=\left(\f(t),\frac{\d \y_\lambda(t)}{\d t}\right)+\left(\mathcal{B}(\y_\lambda(t)),\frac{\d \y_\lambda(t)}{\d t}\right).
\end{align}
We calculate $\left(\mathcal{B}(\y_\lambda),\frac{\d \y_\lambda}{\d t}\right)$ by using H\"older's, Young's and interpolation inequalities as
\begin{align*}
\left(\mathcal{B}(\y_\lambda),\frac{\d \y_\lambda}{\d t}\right)&=-b\left(\y_\lambda,\frac{\d \y_\lambda}{\d t},\y_\lambda\right)\leq
\|\y_\lambda\|_{\wi\L^4}^2\left\|\frac{\d (\nabla\y_\lambda)}{\d t}\right\|_{\H} \nonumber\\&\leq \frac{1}{2}\left\|\frac{\d(\nabla\y_\lambda)}{\d t}\right\|_{\H}^2+ \frac{1}{2}\|\y_\lambda\|_{\wi\L^{r+1}}^{\frac{2(r+1)}{r-1}}\|\y_\lambda\|_{\H}^{\frac{2(r-3)}{r-1}}. 
\end{align*}
Therefore from \eqref{389}, it is immediate that 
\begin{align}
&\frac{\mu}{2}\|\nabla\y_\lambda(t)\|_{\H}^2+\frac{\beta}{r+1}\|\y_\lambda(t)\|_{\wi\L^{r+1}}^{r+1}+\frac{1}{2}	\int_0^t	\left\|\frac{\d \y_\lambda(s)}{\d t}\right\|_{\H}^2\d s+\int_0^t\left(\frac{\d \y_\lambda(s)}{\d t},\Phi_{\lambda}(\y_\lambda(s))\right)\d s\nonumber\\&\leq \frac{\mu}{2}\|\nabla\y_0\|_{\H}^2+\frac{\beta}{r+1}\|\y_0\|_{\wi\L^{r+1}}^{r+1}+\frac{1}{2}\int_0^t\|\f(s)\|_{\H}^2\d s+\frac{1}{2}\int_0^t\left\|\frac{\d(\nabla\y_\lambda(s))}{\d t}\right\|_{\H}^2\d s \nonumber\\&\quad+\frac{1}{2}t^{\frac{r-3}{r-1}}\sup_{s\in[0,t]}\|\y_\lambda(s)\|_{\H}^{\frac{2(r-3)}{r-1}}\left(\int_0^t\|\y_\lambda(s)\|_{\wi\L^{r+1}}^{r+1}\d s\right)^{\frac{2}{r-1}},
\end{align}
for all $t\in[0,T]$. From Hypothesis \ref{hyp1}, we know that $\Phi=\partial\varphi,$ where $\varphi:\H\to\overline{\R}$ is a lower semicontinuous proper convex function. Then by an application of \cite[Chapter 2, Theorem 2.2]{VB2} yields that the Yosida approximation $\Phi_{\lambda}$ is the Gateaux derivative of $\varphi_{\lambda}$, for all $\lambda>0$, that is, $\Phi_{\lambda}=\nabla\varphi_{\lambda}$, where $\varphi_{\lambda}$ is the regularization of $\varphi$ \cite[pp. 64]{VB2}, given by 
\begin{align}\label{e.9}
	\varphi_{\lambda}(\y)=\inf\left\{\frac{\|\y-\z\|_{\H}^{2}}{2\lambda}+\varphi(\z): \z\in\H\right\}, \  \ \text{for all} \ \y\in\H.
\end{align}
Moreover by a standard calculation, we have 
\begin{align}\label{e.10}
	\frac{\d}{\d t}\left[\varphi_{\lambda}(\y_\lambda(\cdot))\right]	= \left(\frac{\d \y_\lambda(\cdot)}{\d t},(\nabla\varphi_{\lambda})(\y_\lambda(\cdot))\right), 
\end{align} 
and 
\begin{align}\label{e..10}
	\int_0^t\left(\frac{\d \y_\lambda(s)}{\d t},\Phi_{\lambda}(\y_\lambda(s))\right)\d s=\varphi_{\lambda}(\y_\lambda(t))-\varphi_{\lambda}(\y_{0}),
\end{align}
for all $t\in[0,T]$. From \cite[Chapter 2, Proposition 1.3]{VB2},  we infer that $\J_{\lambda}:=(\I+\lambda\Phi)^{-1}$ is bounded on bounded subsets of $\H$. Furthermore, from \cite[Chapter 2, Theorem 2.2]{VB2} (see also \cite[Proposotion 4.33, Chapter 4]{Hu}) we also have 
\begin{align}\label{e.12}
	\varphi(\J_{\lambda}(\y))\leq\varphi_{\lambda}(\y)\leq\varphi(\y),  \  \text{ for all } \ \lambda>0,\  \y\in\H.
\end{align}
From \cite[Chapter 2, Proposition 2.1]{VB2},  we know that any proper lower semicontinuous convex function is bounded from below by an affine function. Therefore, there exists $\w\in\H$ and $q\in\R$ such that 
\begin{align}\label{e.13}
	\varphi(\y)\geq(\y,\w)+q, \  \  \text{for all} \ \y\in\H.
\end{align}
{From \cite[proposition 1.3 (iii)]{VB2}, we know that $\J_\lambda:\H\to\H$ is bounded on bounded subsets of $\H$. Moreover, from \eqref{356}, we know that $\|\y_\lambda\|_{\H}\leq C$, where $C$ is constant independent of $\lambda$. Therefore, we  conclude that 
\begin{align*}
	\|\J_\lambda(\y_\lambda)\|_{\H}\leq C,
\end{align*}
where constant $C$ is independent of $\lambda$.} Thus, From \eqref{e.12}, \eqref{e.13} and application of the Cauchy-Schwarz inequality yield
\begin{align}\label{e.14}
	-\varphi_{\lambda}(\y_\lambda)\leq-\varphi(\J_{\lambda}(\y_\lambda))&\leq-(\J_{\lambda}(\y_\lambda),\w)-q\leq	\|\J_{\lambda}(\y_\lambda)\|_{\H}\|\w\|_{\H}+|q|\leq C,
\end{align}
where $C$ is independent of $\lambda$. Thus using  $\varphi_{\lambda}(\y_0)\leq\varphi(\y_0)$ in \eqref{e..10}, we deduce 
\begin{align}\label{e..14}
-\int_0^t\left(\frac{\d \y_\lambda(s)}{\d t},\Phi_{\lambda}(\y_\lambda(s))\right)\d s\leq C,
\end{align} 
where the constant $C$ depends on $\varphi(\y_0).$ Thus from \eqref{356}, \eqref{3.8989} and \eqref{e..14}, we get for all $t\in[0,T]$
\begin{align}\label{e.15.}
\frac{\mu}{2}\|\nabla\y_\lambda(t)\|_{\H}^2+\frac{\beta}{r+1}\|\y_\lambda(t)\|_{\wi\L^{r+1}}^{r+1}+\frac{1}{2}	\int_0^t	\left\|\frac{\d \y_\lambda(s)}{\d s}\right\|_{\H}^2\d s\leq C,
\end{align} where
$C=C\left(\mu,\beta,T,\|\A\y_0\|_{\H},\varphi(\y_0),\|\Phi(\y_0)\|_{\H},\|\Phi(\boldsymbol{0})\|_{\H},\|\f(0)\|_{\H},\|\f\|_{\W^{1,1}(0,T;\H)}\right)$. 

	\vskip 0.2 cm
\noindent 
\textbf{Step III:} \textsl{Proof of \eqref{357}.} 
We take inner product with $\A\y_\lambda$ in \eqref{3.29} to obtain 
\begin{align}\label{e.4}
	&\frac{1}{2}\frac{\d}{\d t} \|\nabla\y_\lambda\|_{\H}^{2}+\mu\|\A\y_\lambda(t)\|_{\H}^{2}	+\beta(\mathcal{C}(\y_\lambda(t)),\A\y_\lambda(t))\nonumber\\& =(\f(t),\A\y_\lambda(t))-(\mathcal{B}(\y_\lambda(t)),\A\y_\lambda(t))-(\Phi_{\lambda}(\y_\lambda(t)),\A\y_\lambda(t)),
\end{align}
for a.e. $t\in[0,T]$. This yields 
\begin{align}\label{e.55.e}
&\|\nabla\y_\lambda(t)\|_{\H} ^{2} + 2\mu\int_0^t\|\A\y_\lambda(s)\|_{\H}^2\d s+2\beta\int_0^t (\mathcal{C}(\y_\lambda(s)),\A\y_\lambda(s)) \d s\nonumber\\&=\|\nabla\y_0\|_{\H}^2+2
\int_0^t (\f(s),\A\y_\lambda(s)) \d s-2\int_0^t (\mathcal{B}(\y_\lambda(s)),\A\y_\lambda(s)) \d s \nonumber\\&\quad-2\int_0^t (\Phi_{\lambda}(\y_\lambda(s)),\A\y_\lambda(s)) \d s,
\end{align}
for all $t\in[0,T]$. Then, using the similar calculations as in \eqref{phi1} and using the fact that $\f\in\W^{1,1}(0,T;\H)$ implies $\f\in\C([0,T];\H)$, one can derive the following estimate:
\begin{align}\label{3104}
	&\|\nabla\y_\lambda(t)\|_{\H} ^{2}+\frac{\mu}{2}\int_0^t\|\A\y_\lambda(s)\|_{\H}^2\d s+\frac{3\beta}{2}\int_0^t \||\nabla \y_\lambda(s)||\y_\lambda(s)|^{\frac{r-1}{2}}\|_{\H}^{2} \d s\nonumber\\&\leq C+
	\begin{cases}
		\varsigma\int_0^t \|\Phi_{\lambda}(\y_\lambda(s))\|_{\H}^2 \d s, \ &\text{for } d=2 \text{ with } r\in(3,\infty)\text{ and } d=3 \text{ with } r\in(3,5),\\
		0, &\text{for} \ d=3 \ \text{with} \ r\in[5,\infty),
	\end{cases}
\end{align}
where $C=C(\mu,\beta,T,\|\y_0\|_{\H},\|\f\|_{\mathrm{L}^2(0,T;\H)}, \|\Phi(\boldsymbol{0})\|_{\H})$. This completes the proof of \eqref{357} for $d=3$ with $r\in[5,\infty).$ Moreover, calculations similar to \eqref{r3e} yields for $d=3$ (with $2\beta\mu\geq1$)
\begin{align}\label{r31.}
	&\|\nabla\y_\lambda(t)\|_{\H} ^{2}+\frac{3\mu}{4}\int_0^t\|\A\y_\lambda(s)\|_{\H}^2\d s+2\left(\beta-\frac{1}{2\mu}\right)\int_0^t \||\nabla \y_\lambda(s)||\y_\lambda(s)|\|_{\H}^{2} \d s\nonumber\\&\leq C+
	\varsigma\int_0^t \|\Phi_{\lambda}(\y_\lambda(s))\|_{\H}^2 \d s,
\end{align}
where $C=C(\mu,\beta,T,\|\y_0\|_{\H},\|\f\|_{\mathrm{L}^2(0,T;\H)}, \|\Phi(\boldsymbol{0})\|_{\H})$.

	\vskip 0.2 cm
\noindent 
\textbf{Step IV:} \textsl{An estimate for $\int_{0}^{t}\|\Phi_{\lambda}(\y_\lambda(s))\|_{\H}^{2} \d s$.}
Let us now find a bound for $\int_{0}^{t}\|\Phi_{\lambda}(\y_\lambda(s))\|_{\H}^{2} \d s$. For this, taking the inner product of \eqref{3.29} with $\Phi_{\lambda}(\y_\lambda(\cdot))$, we get for a.e. $t\in[0,T]$
\begin{align}\label{e.8}
&	\left(\frac{\d \y_\lambda(t)}{\d s},\Phi_{\lambda}(\y_\lambda(t))\right)+	\mu(\Phi_{\lambda}(\y_\lambda(t)),\A\y_\lambda(t))+(\mathcal{B}(\y_\lambda(t)),\Phi_{\lambda}(\y_\lambda(t))\nonumber\\&\qquad+\beta(\mathcal{C}(\y_\lambda(t)),\Phi_{\lambda}(\y_\lambda(t))) +\|\Phi_{\lambda}(\y_\lambda(t))\|_{\H}^{2}=(\f(t),\Phi_{\lambda}(\y_\lambda(t))).
\end{align}
Integrating \eqref{e.8}, and using the condition (H3) of Hypothesis \ref{hyp1} and \eqref{e..14}, we obtain 
\begin{align}\label{3.105.}
(1-\mu\varsigma)\int_{0}^{t}\|\Phi_{\lambda}(\y_\lambda(s))\|_{\H}^{2}	\d s&\leq C+
	\int_{0}^{t} (\f(s),\Phi_{\lambda}(\y_\lambda(s)))\d s-\int_{0}^{t}(\mathcal{B}(\y_\lambda(s)),\Phi_{\lambda}(\y_\lambda(s)) \d s\nonumber\\&\quad-\beta\int_{0}^{t} (\mathcal{C}(\y_\lambda(s)),\Phi_{\lambda}(\y_\lambda(s)))\d s,
\end{align}
for all $t\in[0,T]$. A calculation similar to  \eqref{e1.4}  yields
\begin{align}\label{e.15}
\left|\int_{0}^{t}(\mathcal{B}(\y_\lambda(s)),\Phi_{\lambda}(\y_\lambda(s)) \d s\right|&\leq C+\frac{1-\mu\varsigma}{8}\int_0^t \|\Phi_{\lambda}(\y_{\lambda}(s))\|_{\H}^{2} \d s \nonumber\\&\quad+\frac{\mu(1-\mu\varsigma)}{8\varsigma}\int_0^t \|\mathrm{A\y_{\lambda}}(s)\|_{\H}^{2} \d s,
\end{align} 
where we have used \eqref{e.15.} also. Using the estimates \eqref{e.15.} and \eqref{3104}, and performing similar calculations as in \eqref{3.5555}-\eqref{3.6666}, we  find 
\begin{align}\label{e.16.}
\left|\int_0^t(\mathcal{C}(\y_\lambda(s)),\Phi_{\lambda}(\y_\lambda(s)))\d s\right|\leq 
 C+\frac{1-\mu\varsigma}{8\beta}\int_0^t\|\Phi_{\lambda}(\y_{\lambda}(s))\|_{\H}^{2}\d s,
\end{align}
for $d=3$ and $r\in(3,5)$ and $d=2$ with $r\in(3,\infty)$.
Using the Cauchy-Schwarz and Young's inequailities, we further have 
\begin{align}\label{e.18.}
\left|\int_{0}^{t} (\f(s),\Phi_{\lambda}(\y_\lambda(s)))\d s\right|\leq C\int_0^t \|\f(s)\|_{\H}^2\d s+\frac{1-\mu\varsigma}{4}\int_0^t\|\Phi_{\lambda}(\y_{\lambda}(s))\|_{\H}^{2}\d s.
\end{align}
Combining \eqref{3.105.}-\eqref{e.18.} in \eqref{3104}, we conclude for $d=2,3$ with $r>3$
\begin{align}\label{3.109.}
	\|\nabla\y_\lambda(t)\|_{\H} ^{2}+\frac{\mu}{4}\int_0^t\|\A\y_\lambda(s)\|_{\H}^2\d s+\frac{3\beta}{2}\int_0^t \||\nabla \y_\lambda(s)||\y_\lambda(s)|^{\frac{r-1}{2}}\|_{\H}^{2} \d s\leq C,
\end{align}
for all $t\in[0,T]$. Also from \eqref{r31.}, for $d=r=3$ with $2\beta\mu\geq1$, we find
\begin{align}\label{3.109..}
&\|\nabla\y_\lambda(t)\|_{\H} ^{2}+\frac{\mu}{2}\int_0^t\|\A\y_\lambda(s)\|_{\H}^2\d s+2\left(\beta-\frac{1}{2\mu}\right)\int_0^t \||\nabla \y_\lambda(s)||\y_\lambda(s)|\|_{\H}^{2} \d s\leq C,
\end{align}
for all $t\in[0,T]$. Therfore, we deduce that
\begin{align}\label{3.110.}
	\int_0^t \|\Phi_{\lambda}(\y_\lambda(s))\|_{\H}^2 \d s\leq C,
\end{align}
for all $t\in[0,T]$, which completes the proof. 

\end{proof}

\subsection{Passing to the limit as $\lambda\to 0$} Let us now pass $\lambda\to 0$ and obtain the energy estimates for the solution of the problem \eqref{1p7}. 
\begin{proposition}\label{soln}
The limit of the sequence $(\y_\lambda)_{\lambda>0}$ satisfies the problem \eqref{1p7} for a.e. $t\in[0,T]$ in $\H$. 
\end{proposition}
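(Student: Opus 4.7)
The plan is to pass to the limit $\lambda\to 0$ in the approximating equation \eqref{3.29} using the uniform estimates from Proposition \ref{prop4.1}. First, I would collect from \eqref{356}, \eqref{357} and \eqref{3.110.} the uniform bounds
\begin{align*}
    \|\y_\lambda\|_{\mathrm{L}^\infty(0,T;\V)}+\|\y_\lambda\|_{\mathrm{L}^2(0,T;\D(\A))}+\|\y_\lambda\|_{\mathrm{L}^{r+1}(0,T;\wi\L^{3(r+1)})}+\left\|\tfrac{\d\y_\lambda}{\d t}\right\|_{\mathrm{L}^\infty(0,T;\H)}+\|\Phi_\lambda(\y_\lambda)\|_{\mathrm{L}^2(0,T;\H)}\leq C,
\end{align*}
where $C$ is independent of $\lambda$; the $\mathrm{L}^{r+1}(0,T;\wi\L^{3(r+1)})$-bound follows from the bound on $\int_0^T\||\nabla\y_\lambda||\y_\lambda|^{(r-1)/2}\|_{\H}^2\d t$ via the inequalities \eqref{P.11}-\eqref{P.22}, while the uniform $\mathrm{W}^{1,\infty}(0,T;\H)$-bound comes from \eqref{mr2}. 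By the Banach-Alaoglu theorem, I can extract a subsequence (still indexed by $\lambda$) and limits $\y\in\mathrm{L}^\infty(0,T;\D(\A)\cap\V)$, $\eta,\zeta,\xi$ in the appropriate spaces such that
\begin{align*}
    \y_\lambda\overset{*}{\rightharpoonup}\y\ \text{in}\ \mathrm{L}^\infty(0,T;\V),\quad \A\y_\lambda\overset{*}{\rightharpoonup}\A\y\ \text{in}\ \mathrm{L}^\infty(0,T;\H),\quad \tfrac{\d\y_\lambda}{\d t}\overset{*}{\rightharpoonup}\tfrac{\d\y}{\d t}\ \text{in}\ \mathrm{L}^\infty(0,T;\H),
\end{align*}
and $\mathcal{B}(\y_\lambda)\rightharpoonup\eta$, $\mathcal{C}(\y_\lambda)\rightharpoonup\zeta$, $\Phi_\lambda(\y_\lambda)\rightharpoonup\xi$ in $\mathrm{L}^2(0,T;\H)$.

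Next, I would invoke the Aubin-Lions compactness lemma with the Gelfand triple $\D(\A)\subset\V\subset\H$ to conclude from the uniform bounds on $\y_\lambda$ in $\mathrm{L}^2(0,T;\D(\A))$ and on $\frac{\d\y_\lambda}{\d t}$ in $\mathrm{L}^2(0,T;\H)$ that
\begin{align*}
    \y_\lambda\to\y\quad\text{strongly in}\quad\mathrm{L}^2(0,T;\V)\cap\C([0,T];\H).
\end{align*}
This strong convergence, together with the uniform bound on $\y_\lambda$ in $\mathrm{L}^\infty(0,T;\V)$ and Sobolev embeddings, allows the identification of the weak limits of the nonlinear terms: $\eta=\mathcal{B}(\y)$ by standard arguments (writing $\mathcal{B}(\y_\lambda)-\mathcal{B}(\y)=\mathcal{B}(\y_\lambda-\y,\y_\lambda)+\mathcal{B}(\y,\y_\lambda-\y)$ and using H\"older's inequality together with the strong convergence), and $\zeta=\mathcal{C}(\y)$ by Vitali-type arguments using the uniform $\mathrm{L}^{r+1}(0,T;\wi\L^{3(r+1)})$-bound (so that $|\y_\lambda|^{r-1}\y_\lambda$ is equi-integrable) and a.e.\ convergence extracted from a further subsequence.

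The main obstacle is the identification $\xi\in\Phi(\y)$, and more precisely showing that $\xi(t)\in\Phi(\y(t))$ for a.e.\ $t\in[0,T]$. The strategy here is to exploit the maximal monotonicity of $\Phi$ via the inclusion $\Phi_\lambda(\y_\lambda)\in\Phi(\J_\lambda(\y_\lambda))$, where $\J_\lambda=(\I+\lambda\Phi)^{-1}$. Since $\Phi_\lambda(\y_\lambda)$ is uniformly bounded in $\mathrm{L}^2(0,T;\H)$, we have $\|\y_\lambda-\J_\lambda(\y_\lambda)\|_{\H}=\lambda\|\Phi_\lambda(\y_\lambda)\|_{\H}\to 0$ in $\mathrm{L}^2(0,T;\H)$, so that $\J_\lambda(\y_\lambda)\to\y$ strongly in $\mathrm{L}^2(0,T;\H)$. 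Then I would apply the standard demiclosedness/limsup criterion: using $\y_\lambda\to\y$ strongly in $\mathrm{L}^2(0,T;\H)$ and $\Phi_\lambda(\y_\lambda)\rightharpoonup\xi$ weakly in $\mathrm{L}^2(0,T;\H)$, one shows
\begin{align*}
    \limsup_{\lambda\to 0}\int_0^T(\Phi_\lambda(\y_\lambda(t)),\y_\lambda(t))\d t\leq\int_0^T(\xi(t),\y(t))\d t,
\end{align*}
which, combined with the maximal monotonicity of the realization of $\Phi$ in $\mathrm{L}^2(0,T;\H)$ (as in \cite[Chapter 2, Proposition 1.3(iv)]{VB2}, already used in the proof of Proposition \ref{prop3.3}), yields $\xi(t)\in\Phi(\y(t))$ a.e.

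Finally, passing to the limit in \eqref{3.29} using all the above convergences gives
\begin{align*}
    \frac{\d\y(t)}{\d t}+\mu\A\y(t)+\mathcal{B}(\y(t))+\beta\mathcal{C}(\y(t))+\xi(t)=\f(t)\quad\text{in}\ \H, \text{ a.e. }t\in[0,T],
\end{align*}
with $\xi(t)\in\Phi(\y(t))$, which is exactly \eqref{1p7}. The initial condition $\y(0)=\y_0$ passes to the limit because $\y_\lambda\to\y$ in $\C([0,T];\H)$, and the regularity claimed in \eqref{regu} is inherited from the weak-$*$ lower semicontinuity of the norms involved together with the bound $\int_0^T\|\tfrac{\d(\nabla\y_\lambda)}{\d t}\|_{\H}^2\d t\leq C$ established in Step II of Proposition \ref{prop4.1}.
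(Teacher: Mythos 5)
Your proposal is correct in substance, but it identifies the limits of the nonlinear terms by a genuinely different mechanism than the paper. You treat $\mathcal{B}$ and $\mathcal{C}$ separately and directly: the convective term via the decomposition $\mathcal{B}(\y_\lambda)-\mathcal{B}(\y)=\mathcal{B}(\y_\lambda-\y,\y_\lambda)+\mathcal{B}(\y,\y_\lambda-\y)$ combined with the strong convergence $\y_\lambda\to\y$ in $\mathrm{L}^2(0,T;\V)$, and the damping term via a.e.\ convergence along a further subsequence plus a Vitali/equi-integrability argument based on the uniform $\mathrm{L}^{r+1}(0,T;\wi\L^{3(r+1)})$ bound. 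The paper instead lumps the single-valued part into $\mathcal{G}(\cdot)=\mu\A+\mathcal{B}(\cdot)+\beta\mathcal{C}(\cdot)$, records only $\mathcal{G}(\y_\lambda)\rightharpoonup\mathcal{G}_0$ in $\mathrm{L}^2(0,T;\H)$, and identifies $\mathcal{G}_0=\mathcal{G}(\y)$ by a Minty--Browder argument: the energy equalities \eqref{mb1} and \eqref{mb5}, a limsup computation using weak lower semicontinuity of the $\H$-norm, the quasi-$m$-accretivity of $\mathcal{G}$ from Proposition \ref{prop33}, and the hemicontinuity step with the perturbation $\v=\y+\lambda\w$. Your route is more elementary and does not use the monotonicity of $\mathcal{G}$ at all, only compactness; the paper's route stays entirely within the monotone-operator framework and avoids pointwise and Vitali-type arguments, at the cost of relying on the restriction to $r>3$ (or $r=3$ with $2\beta\mu\geq1$) under which the quasi-monotonicity was established. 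For the multivalued term the two arguments essentially coincide: resolvent convergence $\J_{\lambda}(\y_\lambda)\to\y$ in $\mathrm{L}^2(0,T;\H)$ plus demiclosedness (weak--strong closedness) of the maximal monotone realization of $\Phi$. Two small corrections: Proposition \ref{prop4.1} bounds $\A\y_\lambda$ only in $\mathrm{L}^2(0,T;\H)$, so the weak-star convergence $\A\y_\lambda\stackrel{\ast}{\rightharpoonup}\A\y$ in $\mathrm{L}^\infty(0,T;\H)$ that you assert is not available at this stage (the $\mathrm{L}^{\infty}(0,T;\D(\A))$ regularity in \eqref{regu} is imported from the abstract result, Proposition \ref{thm1.1}, via uniqueness); and since $\Phi_\lambda(\y_\lambda)\rightharpoonup\xi$ weakly while $\y_\lambda\to\y$ strongly in $\mathrm{L}^2(0,T;\H)$, your limsup inequality for the pairing is in fact an equality, which only strengthens the conclusion.
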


\begin{proof}
	The proof of this proposition depends on a Minty-Browder technique. From the Proposition \ref{prop3.5}, we have 
\begin{align}\label{c.1}
	\y_\lambda\in \W^{1,\infty}(0,T;\H)\cap \mathrm{L}^{\infty}(0,T;\D(\A))\cap \C([0,T];\V).
\end{align}
From \eqref{3.109.}-\eqref{3.110.}, we have uniform bounds for the sequences 
\begin{align}\label{alin}
(\A\y_\lambda)_{\lambda>0} \ \text{and} \ (\Phi_{\lambda}(\y_\lambda))_{\lambda>0 }\ \text{ in } \ \mathrm{L}^2(0,T;\H).
\end{align}
Thus, by making use of the Banach-Alaoglu theorem, we infer from \eqref{356}-\eqref{357}, \eqref{e.15.} and \eqref{alin}, the following weak and weak-star convergences:
\begin{equation}\label{c2}
	\left\{
	\begin{aligned}
			\y_\lambda&\stackrel{\ast}{\rightharpoonup}\ \y \ &&\text{ in } \ \mathrm{L}^{\infty}(0,T;\V\cap\wi\L^{r+1}),\\
			\y_\lambda&\rightharpoonup\y \ &&\text{ in } \ \mathrm{L}^{r+1}(0,T;\wi\L^{3(r+1)}),\\
	\frac{\d\y_\lambda}{\d t}&\rightharpoonup \ \frac{\d\y}{\d t} \ &&\text{ in } \ \mathrm{L}^2(0,T;\H),
\end{aligned}\right.
	\left\{
\begin{aligned}
	\A\y_\lambda&\rightharpoonup\ \A\y  \ &&\text{ in } \ \mathrm{L}^2(0,T;\H),\\
	\Phi_{\lambda}(\y_\lambda)&\rightharpoonup\ \phi  \ &&\text{ in } \ \mathrm{L}^2(0,T;\H).
\end{aligned}\right.
\end{equation}
Since   $\V\hookrightarrow\H\hookrightarrow\V'$, the embedding of $\V\hookrightarrow\H$ is compact, and the fact that $\y \in\mathrm{L}^{\infty}(0,T;\V)$, $\frac{\d\y}{\d t} \in\mathrm{L}^2(0,T;\H)\hookrightarrow\mathrm{L}^2(0,T;\V')$ imply 
	\begin{align}\label{c3}
		\y_\lambda\to\y \ \text{ in } \ \C([0,T];\H),
	\end{align}
where we have used the Aubin-Lions compactness lemma. Since $\D(\A)\hookrightarrow\V\hookrightarrow\H$,  $(\y_\lambda)_{\lambda>0}$ is bounded in $\mathrm{L}^2(0,T;\D(\A))$ and $\left(\frac{\d\y_\lambda}{\d t}\right)_{\lambda>0}$ is bounded in $\mathrm{L}^2(0,T;\H)$, and the embedding $\D(\A)\hookrightarrow\V$ is compact, it implies once again from Aubin-Lions compactness lemma that 
\begin{align}\label{c3.}
	\y_\lambda\to\y \ \text{ in } \ \mathrm{L}^2(0,T;\V).
\end{align} 
From \cite[Chapter 2, Proposition 1.4, part(i)]{VB2}, we know that $(\I+\lambda\Phi)^{-1}$ is nonexpansive (that is, Lipschitz with Lipschitz constant $1$) and from \cite[Chapter 2, Proposition 1.3, part (iii)]{VB2}, we have $(\I+\lambda\Phi)^{-1}(\y)\to \y$ as $\lambda\to0$ in $\H$. Therefore, we conclude 
\begin{align*}
 & \int_0^T \|(\I+\lambda\Phi)^{-1}\y_\lambda(t)-\y(t)\|_{\H}^2\d t\nonumber\\&\leq 2\int_0^T\|(\I+\lambda\Phi)^{-1}(\y_\lambda(t))-(\I+\lambda\Phi)^{-1}\y(t)\|_{\H}^2\d t+2\int_0^T\|(\I+\lambda\Phi)^{-1}\y(t)-\y(t)\|_{\H}^2\d t\nonumber\\&\leq 2\int_0^T\|\y_\lambda(t)-\y(t)\|_{\H}^2\d t+2\int_0^T\|(\I+\lambda\Phi)^{-1}\y(t)-\y(t)\|_{\H}^2\d t\nonumber\\&\to 0\ \text{ as }\ \lambda\to 0,
\end{align*}
so  that $(\I+\lambda\Phi)^{-1}(\y_\lambda)\to \y$ in $\mathrm{L}^2(0,T;\H)$ and $(\I+\lambda\Phi)^{-1}(\y_\lambda(t))\to \y(t)$, for a.e. $t\in[0,T]$ in $\H$ (along a subsequence, which is still denoted by the same).   From \cite[Chapter 2, Proposition 1.1, part (i)]{VB2} and  \cite[Proposition 1.7]{JPSS},  we know that  the maximal monotone operator $\Phi$ is weak-strong and strong-weak closed in $\H\times\H,$ that is, if  $\Phi_{\lambda}(\y_\lambda)\in\Phi(\I+\lambda\Phi)^{-1}(\y_\lambda), $  $ (\I+\lambda\Phi)^{-1}(\y_\lambda)\to \y$  in $\mathrm{L}^2(0,T;\H)$   and $\Phi_{\lambda}(\y_\lambda)\rightharpoonup\ \phi  \ \text{ in } \ \mathrm{L}^2(0,T;\H), $ then $\phi\in\Phi(\y)$ for a.e. $t\in[0,T]$  in $\H$. 

{Let us rewrite \eqref{3.29} as 
	\begin{align}\label{mb}
		\frac{\d\y_\lambda(t)}{\d t}+\mathcal{G}(\y_\lambda(t))+\Phi_{\lambda}(\y_\lambda(t))=\f(t), \ \  \text{for a.e.} \ t\in[0,T] \ \text{ in } \ \H,
	\end{align}
	where $\mathcal{G}(\cdot)=\mu\A+\mathcal{B}(\cdot)+\mathcal{C}(\cdot)$ is a quasi $m$-accretive operator as we have shown in Proposition \ref{prop33}. From \eqref{mb}, we have following energy equality:
	\begin{align}\label{mb1}
		\|\y_\lambda(t)\|_{\H}^2+2\int_0^t \left(\mathcal{G}(\y_\lambda(s)) -\f(s),\y_\lambda(s)\right)\d s +\int_0^t (\Phi_\lambda(\y_\lambda)(s),\y_\lambda(s))\d s=\|\y_0\|_{\H}^2,
	\end{align}
	for all $t\in[0,T]$. 
	From \eqref{e.15.} and \eqref{3.110.}, we have following bound:
	\begin{align*}
		\int_0^T \|\mathcal{G}(\y_\lambda)\|_{\H}^2\d t\leq C,
	\end{align*}
	which ensures that
	\begin{align}\label{mb3}
		\mathcal{G}(\y_\lambda)\rightharpoonup\mathcal{G}_0  \  \ \text{in} \ \mathrm{L}^2(0,T;\H).
	\end{align}
We now calculate by using \eqref{c3}-\eqref{c3.} and the fact that $\Phi$ is weak-strong and strong-weak closed that 
\begin{align*}
	&\left|\int_0^t (\Phi_{\lambda}(\y_\lambda(s)),\y_\lambda(s))\d s-\int_0^t (\phi(s),\y(s))\d s\right|
	\nonumber\\&\leq\left|\int_0^t (\Phi_{\lambda}(\y_\lambda(s))-\phi(s),\y_\lambda(s)-\y(s))\d s \right|+\left|\int_0^t (\Phi_{\lambda}(\y_\lambda(s))-\phi(s),\y(s))\d s \right|\nonumber\\&\quad+\left|\int_0^t (\phi(s),\y_\lambda(s)-\y(s))\d s\right|\nonumber\\&\to0  \ \ \text{as} \  \lambda\to0,
\end{align*}
which implies that 
\begin{align}\label{mb4}
	\int_0^t (\Phi_{\lambda}(\y_\lambda(s)),\y_\lambda(s))\d s\to\int_0^t (\phi(s),\y(s))\d s, \  \  \text{as} \ \lambda\to0,
\end{align}
for all $t\in(0,T)$ and $\phi\in\Phi(\y)$. Finally, on passing the limit $\lambda\to0$ in \eqref{mb} gives
\begin{equation*}
	\left\{
	\begin{aligned}
		\frac{\d\y(t)}{\d t}+\mathcal{G}_0(t)+\Phi(\y(t))&\ni\f(t), \ \text{ a.e. } \ t\in[0,T]\ \text{ in }\ \H, \\
		\y(0)&=\y_0. 
	\end{aligned}
	\right.
\end{equation*}
In order to complete the proof, it is enough to show that $\mathcal{G}_0=\mathcal{G}(\y)$. Since $\y\in\mathrm{L}^2(0,T;\V)$ and $\frac{\d\y}{\d t}\in\mathrm{L}^2(0,T;\H)\subset\mathrm{L}^2(0,T;\V')$, the following energy equality  is satisfied: 
\begin{align}\label{mb5}
	\|\y(t)\|_{\H}^2+2\int_0^t \left(\mathcal{G}_0(s)-\f(s),\y(s)\right)\d s +\int_0^t (\phi(s),\y(s)) \d s = \|\y_0\|_{\H}^2,
\end{align}
for all $t\in[0,T]$. Taking the limit supremum in \eqref{mb1} and using weakly  lower semicontinuity property of the $\|\cdot\|_{\H}$-norm, \eqref{c3}, \eqref{mb3} - \eqref{mb4}, we obtain
	\begin{align}\label{mb6}
		&\limsup\limits_{\lambda\to0} \int_0^t (\mathcal{G}(\y_\lambda(s)),\y_\lambda(s))\d s \nonumber\\&=
		\limsup\limits_{\lambda\to0}\left[\frac{1}{2}\|\y_0\|_{\H}^2-\frac{1}{2}\|\y_\lambda(t)\|_{\H}^2+\int_0^t(\f(s),\y_\lambda(s)) \d s+\int_0^t (\Phi_\lambda(\y_\lambda)(s),\y_\lambda(s))\d s\right] \nonumber\\&= \frac{1}{2}\left[\|\y_0\|_{\H}^2-\liminf\limits_{\lambda\to0}\|\y_\lambda(t)\|_{\H}^2\right]+
		\limsup\limits_{\lambda\to0}\left[\int_0^t(\f(s),\y_\lambda(s))\d s\right]\nonumber\\&\quad +\limsup\limits_{\lambda\to0} \int_0^t (\Phi_\lambda(\y_\lambda)(s),\y_\lambda(s))\d s\nonumber\\&\leq
		\frac{1}{2}\left[\|\y_0\|_{\H}^2-\|\y(t)\|_{\H}^2\right]+
		\left[\int_0^t(\f(s),\y(s))\d s\right]+\int_0^t (\phi(s),\y(s))\d s\nonumber\\&=
		\int_0^t \left(\mathcal{G}_0(s) ,\y(s)\right)\d s,
	\end{align}
	for all $t\in[0,T]$. From Proposition \ref{prop33}, we know that the nonlinear operator $\mathcal{G}(\cdot)$ is quasi $m$-accretive, so we can write
	\begin{align*}
		\int_0^T (\mathcal{G}(\v(t))-\mathcal{G}(\y_\lambda(t)),\v(t)-\y_\lambda(t))\d t+\kappa\int_0^T  
		\|\v(t)-\y_\lambda(t)\|_{\H}^2\d t\geq0.
	\end{align*}
	On taking the limit supremum and using \eqref{c3} and \eqref{mb6}, we obtain
	\begin{align*}
		\int_0^T (\mathcal{G}(\v(t))-\mathcal{G}_0(t),\v(t)-\y(t))\d t+ \kappa\limsup\limits_{\lambda\to0}\int_0^T  
		\|\v(t)-\y_\lambda(t)\|_{\H}^2\d t\geq0.
	\end{align*}
	Using the limit \eqref{c3}, one can pass the limit in the last term and this gives
	\begin{align*}
		\int_0^T (\mathcal{G}_0(t)-\mathcal{G}(\v(t)),\y(t)-\v(t))\d t+ \kappa\int_0^T  
		\|\y(t)-\v(t)\|_{\H}^2\d t\geq0,
	\end{align*}
	for all $\v\in\mathrm{L}^2(0,T;\H)$. Let us take $\v=\y+\lambda\w$ and substituting in the above expression  and dividing by $\lambda$, we get
	\begin{align}\label{444}
		\int_0^T (\mathcal{G}_0(t)-\mathcal{G}(\y(t)+\lambda\w(t)),\w(t))\d t+ \kappa\lambda\int_0^T \|\w(t)\|_{\H}^2\d t\geq0.
	\end{align}
	Thus, by taking limit $\lambda\to0$ in \eqref{444} and using the fact that $\mathcal{G}(\cdot)$ is hemicontinuous, we finally conclude that $\mathcal{G}_0=\mathcal{G}(\y).$ }
\end{proof}


\subsection{Uniqueness of solution to the problem \eqref{1p7}}
Let us now prove that the solution obtained by passing to the limit with $\lambda\to 0$  is unique.
\begin{proposition}\label{unique}
	The solution for  the problem \eqref{1p7} is unique. 
\end{proposition}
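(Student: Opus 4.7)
The plan is the standard energy-difference argument for two strong solutions with matching data, using the monotonicity of $\Phi$ together with the quasi-$m$-accretivity of $\mathcal{G}=\mu\A+\mathcal{B}(\cdot)+\beta\mathcal{C}(\cdot)$ established in Proposition \ref{prop33}. Let $\y_1$ and $\y_2$ be two solutions of \eqref{1p7} with the same $\y_0$ and $\f$, and pick measurable selections $\xi_i(t)\in\Phi(\y_i(t))$ so that each $\y_i$ satisfies the equation in $\H$ for a.e.\ $t\in[0,T]$. Set $\w=\y_1-\y_2$. Subtracting the two equations and testing with $\w$ in $\H$, which is justified since $\y_i\in\W^{1,\infty}(0,T;\H)\cap\mathrm{L}^\infty(0,T;\D(\A))$ from the regularity \eqref{regu}, yields
\begin{align*}
\frac{1}{2}\frac{\d}{\d t}\|\w(t)\|_{\H}^2+\mu\|\nabla\w(t)\|_{\H}^2&+\beta\langle\mathcal{C}(\y_1)-\mathcal{C}(\y_2),\w\rangle+(\xi_1-\xi_2,\w)\\
&=-\langle\mathcal{B}(\y_1)-\mathcal{B}(\y_2),\w\rangle.
\end{align*}

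The term $(\xi_1-\xi_2,\w)$ is nonnegative by the monotonicity of $\Phi$, and by \eqref{2.23} we have $\beta\langle\mathcal{C}(\y_1)-\mathcal{C}(\y_2),\w\rangle\geq\tfrac{\beta}{2}\||\y_2|^{(r-1)/2}\w\|_{\H}^2$. The bilinear term is handled exactly as in Proposition \ref{prop33}: for $r>3$, the estimate \eqref{2.30} (derived from \eqref{2..28}--\eqref{2..29} by interpolating $|\y_2|^2|\w|^2$ between $|\y_2|^{r-1}|\w|^2$ and $|\w|^2$) gives
\begin{equation*}
|\langle\mathcal{B}(\y_1)-\mathcal{B}(\y_2),\w\rangle|\leq\frac{\mu}{2}\|\nabla\w\|_{\H}^2+\frac{\beta}{2}\||\y_2|^{(r-1)/2}\w\|_{\H}^2+\varrho\|\w\|_{\H}^2,
\end{equation*}
with $\varrho$ as in \eqref{215}; while for $d=r=3$ with $2\beta\mu\geq1$, the corresponding estimate \eqref{ae.} together with \eqref{gm} absorbs the bilinear term entirely into $\mu\|\nabla\w\|_{\H}^2$ plus $\tfrac{1}{2}(\beta-\tfrac{1}{2\mu})\|\y_2\w\|_{\H}^2$, which is nonnegative.

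Putting these three bounds together, in both regimes the damping and nonlinear-term contributions drop out and one is left with
\begin{equation*}
\frac{\d}{\d t}\|\w(t)\|_{\H}^2+\mu\|\nabla\w(t)\|_{\H}^2\leq 2\varrho\|\w(t)\|_{\H}^2,\qquad t\in[0,T],
\end{equation*}
(with $\varrho=0$ in the case $d=r=3$, $2\beta\mu\geq1$). Since $\w(0)=\mathbf{0}$, Gronwall's inequality forces $\w\equiv\mathbf{0}$, proving uniqueness. The only real obstacle is verifying that for every $r\in[3,\infty)$ (and in particular for the critical case $d=r=3$, $2\beta\mu\geq1$, where one cannot afford any slack in the constants) the bilinear term is fully absorbed by the viscous plus Forchheimer contributions; but this is precisely the content of the monotonicity computation already carried out in the proof of Proposition \ref{prop33}, so no new estimates are needed.
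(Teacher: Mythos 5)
Your proposal is correct and follows essentially the same route as the paper: subtract the two equations, test with the difference in $\H$, discard the $\Phi$-contribution by monotonicity, absorb the convective term into the viscous and damping terms via the estimates \eqref{2.30}--\eqref{2.27} (respectively \eqref{ae.}--\eqref{gm} for $d=r=3$ with $2\beta\mu\geq1$), and conclude with Gronwall's inequality. Your explicit separation of the critical case $d=r=3$ is a slightly more careful presentation of what the paper leaves implicit, but the argument is identical in substance.
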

\begin{proof}
	
	Let $\y_1(\cdot)$ and $\y_2(\cdot)$ be two solutions of \eqref{1p7} satisfying \eqref{356}-\eqref{357}. Then we have  $ \text{for a.e.}  \  t\in[0,T],$
	\begin{align*}
		&\frac{1}{2}\frac{\d}{\d t} \|\y_1(t)-\y_2(t)\|_{\H}^2+\mu\|\nabla(\y_1(t)-\y_2(t))\|_{\H}^2 + (\mathcal{B}(\y_1(t))-\mathcal{B}(\y_2(t)),\y_1(t)-\y_2(t))\nonumber\\&\quad+\beta(\mathcal{C}(\y_1(t))-\mathcal{C}(\y_2(t)),\y_1(t)-\y_2(t))+(\xi_1(t)-\xi_2(t),\y_1(t)-\y_2(t))=0, 
	\end{align*}
	where $\xi_j(\cdot)\in\Phi(\y_j(\cdot)),$ for $j=1,2.$ Integrating the  above equality  and using the monotonicity of $\Phi$, we can write
	\begin{align}\label{u1}
		&\|\y_1(t)-\y_2(t)\|_{\H}^2+2\mu\int_0^t\|\nabla(\y_1(s)-\y_2(s))\|_{\H}^2 \d s\nonumber\\&\leq\|\y_1(0)-\y_2(0)\|_{\H}^2-2\int_0^t (\mathcal{B}(\y_1(s))-\mathcal{B}(\y_2(s)),\y_1(s)-\y_2(s))\d s\nonumber\\&\quad-2\beta\int_0^t (\mathcal{C}(\y_1(s))-\mathcal{C}(\y_2(s)),\y_1(s)-\y_2(s))\d s.
	\end{align}
	Using calculations similar to \eqref{2.30}-\eqref{2.27} in \eqref{u1}, we find 
	\begin{align}\label{u4}
	&\|\y_1(t)-\y_2(t)\|_{\H}^2+\mu\int_0^t\|\nabla(\y_1(s)-\y_2(s))\|_{\H}^2 \d s \nonumber\\&\leq\|\y_1(0)-\y_2(0)\|_{\H}^2+\varrho\int_0^t \|\y_1(s)-\y_2(s)\|_{\H}^2\d s,
	\end{align}
for all $t\in[0,T]$. Applying Gronwall's inequality in \eqref{u4}, we obtain for all $t\in[0,T]$
	\begin{align*}
		\|\y_1(t)-\y_2(t)\|_{\H}^2\leq\|\y_1(0)-\y_2(0)\|_{\H}^2e^{\varrho T},
	\end{align*}
which  proves the uniqueness.
\end{proof}
\subsection{Proof of Theorem \ref{thm1.2}} 
From Proposition \ref{prop3.3}, we know that the operator $\mathfrak{A}(\cdot)$ is $m$-accretive in $\H\times\H$ for sufficiently large $\kappa\geq\varrho$. So, by using the abstract theory, we obtain a unique strong solution of the problem \eqref{1p7} with the regularity \eqref{1p.4} given in Proposition \ref{thm1.1}.  While from Proposition \ref{unique}, we infer that the problem \eqref{1p7} has a unique solution satisfying the regularity 
$$\y\in\mathrm{L}^{2}(0,T;\D(\A))\cap\mathrm{L}^{r+1}(0,T;\wi\L^{3(r+1)})\cap\mathrm{W}^{1,2}(0,T;\V).$$
 Thus by the uniqueness of strong solutions, both the solution must coincide and  the solution  satisfies \eqref{1p7} in $\H$ for a.e. in $t\in[0,T]$  with the regularity given in \eqref{regu}.

\section{Applications}\label{sec5}\setcounter{equation}{0}
We discuss some applications of the results obtained in Theorem \ref{thm1.2} and Proposition \ref{thm1.1}. These include flow invariance preserving feedback controllers, a time optimal control problem and stabilizing feedback controllers for 2D and 3D CBF equations, etc. 
\subsection{Flow invariance preserving feedback controllers (\cite{VBSS})}
Let us consider the following controlled CBF equations:
\begin{equation}\label{appl1.1}
	\left\{
	\begin{aligned}
		\frac{\d \y(t)}{\d t}+\mu\A\y(t)+\mathcal{B}(\y(t))+\beta\mathcal{C}(\y(t))&= \f(t)+\mathbf{U}(t),  \ t\in(0,T],\\ 
		\y(0)&=\y_0,
	\end{aligned}
	\right.
\end{equation}	
where $\mathbf{U}(\cdot)$ is distributed control acting on the system, $\f\in\W^{1,1}(0,T;\H)$ and $\y_0\in\D(\A)$. Consider a closed and convex set $\mathcal{K}\subset\H$ such that $\boldsymbol{0}\in\mathcal{K}$ and 
\begin{align}\label{appl1..1}
	(\I+\lambda\A)^{-1}\mathcal{K}\subset\mathcal{K},  \  \text{for all} \  \lambda>0.
\end{align}
Our aim is to search for a feedback control $\mathbf{U}=\Psi(\y)$ such that $\y(t)\in\mathcal{K}, $ for all $t\in[0,T],$ if $\y_0\in\mathcal{K}.$ That is, we have to find a feedback controller for which the	set $\mathcal{K}$  is invariant with respect to CBF flow. We establish this by solving the following CBF inclusion problem: 
\begin{align}\label{appl1.2}
	\frac{\d\y(t)}{\d t}+\mu\A\y(t)+\B(\y(t))+\beta\mathcal{C}(\y(t))-\f(t)+N_{\mathcal{K}}(\y(t))\ni \mathbf{0}, \ t\in(0,T],
\end{align}
where $N_{\mathcal{K}}(\y)=\{\w\in\H:(\w,\y-\z)\geq0,\ \text{for all} \ \z\in\mathcal{K}\}$ is the well-known \textsl{Clark's normal cone} to $\mathcal{K}$ at $\y$. {We know that the multi-valued map $\Phi:=\partial\I_{\mathcal{K}}$ is a maximal monotone subset of $\H\times\H$ satisfying the Hypothesis \ref{hyp1} (see example \ref{exm})}. Therefore we can apply  Proposition \ref{thm1.1} to the  problem \eqref{appl1.2} to determine a feedback controller $\mathbf{U}\in\mathrm{L}^{\infty}(0,T;\H)$ with $$\mathbf{U}(t)\in-N_{\mathcal{K}}(\y(t))\ \mbox{ for a.e. $t\in[0,T]$},$$  which is given by 
\begin{align}\label{appl1.3}
\mathbf{U}(t)=&-\f(t)+\mu\A\y(t)+\B(\y(t))+\beta\mathcal{C}(\y(t))\nonumber\\&-(-\f(t)+\mu\A\y(t)+\B(\y(t))+\beta\mathcal{C}(\y(t))+N_{\mathcal{K}}(\y(t)))^0, \ \ \ \  \text{for a.e.} \ t\in[0,T],
\end{align} 
where $N_{\mathcal{K}}(\y)$ is the $\H$-valued normal cone to $\mathcal{K}$ at $\y.$ 

\textsl{Flow invariance for the {enstrophy} of the system.} {The enstrophy of the flow is an important quantity which determines the rate of dissipation of kinetic energy (see \cite{aanse,FMRT}). It is defined as 
\begin{align*}
	\mathcal{E}(\y):=\int_{\mathbb{T}^d} |\nabla\y(x)|^2\d x.
\end{align*}
For the incompressible flow, one can express the enstrophy in the following form:
\begin{align*}
	\mathcal{E}(\y):=\int_{\mathbb{T}^d} |\boldsymbol{\omega}(x)|^2\d x,
\end{align*}
 where $\boldsymbol{\omega}:=\nabla\times\y$ is the vorticity vector.} We consider the constraint set
\begin{align*}
	\mathcal{K}=\{\y\in\V:\|\nabla\times\y\|_{\H}=\|\nabla\y\|_{\H}\leq \varpi \}.
\end{align*}
Let $\g$ be any arbitrary element of $\mathcal{K}$ such that $\y+\lambda\A\y=\g,$ which has a unique strong solution for all $\g\in\V$.  Taking the inner product with $\y$ and using the Cauchy-Schwarz and Young's inequalities, we find
\begin{align*}
	\|\y\|_{\H}^2+\lambda\|\nabla\y\|_{\H}^2\leq\frac{1}{2}\|\y\|_{\H}^2+\frac{1}{2}\|\g\|_{\H}^2\Rightarrow
	\|\y\|_{\H}\leq\|\g\|_{\H},
	\end{align*}
for all $\lambda>0$. Taking the inner product with $\A\y$,  we obtain 
\begin{align*}
\|\nabla\y\|_{\H}^2+\lambda\|\A\y\|_{\H}^2&\leq\frac{1}{2}\|\nabla\g\|_{\H}^2+\frac{1}{2}\|\nabla\y\|_{\H}^2\Rightarrow
\|\nabla\y\|_{\H}\leq\|\nabla\g\|_{\H},
\end{align*} 
 for all $\lambda>0.$ Thus from the definition of $\mathcal{K},$ we have $\y\in\mathcal{K}$ and this imply $(\I+\lambda\A)^{-1}\mathcal{K}\subset\mathcal{K}.$ Our aim is to find a feedback control so that enstrophy of the system can be  kept inside this constraint set $\mathcal{K}.$  The normal cone corresponding to the convex set $\mathcal{K}$ is
\begin{align*}
	N_{\mathcal{K}}(\y)=
\begin{cases}
	\boldsymbol{0}, &\text{ if } \ \|\nabla\y\|_{\H}<\varpi ,\\
	\bigcup\limits_{\lambda>0} \lambda\A\y, &\text{ if } \ \|\nabla\y\|_{\H}=\varpi .
\end{cases}
\end{align*}
The feedback control is given by
$\mathbf{U}(t)\in-N_{\mathcal{K}}(\y(t))$ for a.e. $t\in[0,T].$ For $\|\nabla\y\|_{\H}<\varpi$,
that is, when the flow remain inside the constraint set $\mathcal{K}$, we have  $\mathbf{U}(t)=\boldsymbol{0}.$
For  $\|\nabla\y\|_{\H}=\varpi$,  
\begin{align}\label{appl1.4}
\mathbf{U}(t)=-\lambda_0\A\y(t), \  \text{for a.e.} \ t\in[0,T],
\end{align}
 for some $\lambda_0>0$.  Then from \eqref{appl1.3}, we have for a.e. $t\in[0,T]$
\begin{align}\label{56}
\mathbf{U}(t)=&-\f(t)+\mu\A\y(t)+\B(\y(t))+\beta\mathcal{C}(\y(t))+	\frac{\d^+ \y(t)}{\d t}.
\end{align}
{Since $\y\in\C([0,T];\V)$, by an application of the Lebesgue dominated convergence theorem, we calculate for all $t\in[0,T]$ (cf. \eqref{1p5})
	\begin{align}\label{ap2}
		\frac{\d^+}{\d t}\|\nabla\y(t)\|_{\H}^2&= \lim\limits_{h\to0}\frac{\|\nabla\y(t+h)\|_{\H}^2-\|\nabla\y(t)\|_{\H}^2}{h}\nonumber\\&=
		\lim\limits_{h\to0}\frac{(\nabla\y(t+h)-\nabla\y(t),\nabla\y(t+h))+(\nabla\y(t),\nabla\y(t+h)-\nabla\y(t))}{h}\nonumber\\&=\left(\lim\limits_{h\to0}\frac{(\nabla\y(t+h)-\nabla\y(t))}{h},\nabla\y(t)\right)+\left(\nabla\y(t),\lim\limits_{h\to0}\frac{(\nabla\y(t+h)-\nabla\y(t))}{h}\right)\nonumber\\&=2\left(\frac{\d^+(\nabla\y(t))}{\d t},\nabla\y(t)\right).
	\end{align}
Since $\y\in\mathrm{L}^{\infty}(0,T;\D(\A))$, in a similar way, we estimate for a.e. $t\in[0,T]$
\begin{align}\label{ap1}
	\left(\frac{\d^+ \y(t)}{\d t},\A\y(t)\right)&= \left(\lim\limits_{h\to0}\frac{\y(t+h)-\y(t)}{h},\A\y(t)\right)=
	\lim\limits_{h\to0}\left(\frac{\y(t+h)-\y(t)}{h},\A\y(t)\right)\nonumber\\&=\lim\limits_{h\to0}\left(\frac{\nabla\y(t+h)-\nabla\y(t)}{h},\nabla\y(t)\right)\nonumber\\&=\left(\lim\limits_{h\to0}\frac{\nabla\y(t+h)-\nabla\y(t)}{h},\nabla\y(t)\right)\nonumber\\&=\left(\frac{\d^+(\nabla\y(t))}{\d t},\nabla\y(t)\right).
\end{align}
Thus, from \eqref{ap1}-\eqref{ap2}, we get $	\left(\frac{\d^+ \y(t)}{\d t},\A\y(t)\right)=
\frac{1}{2}\frac{\d^+}{\d t}\|\nabla\y(t)\|_{\H}^2$, for a.e. $t\in[0,T]$ and  for $\|\nabla\y(t)\|_{\H}=\varpi,$ we have 
$
	\left(\frac{\d^+ \y(t)}{\d t},\A\y(t)\right)=0,
$
for a.e. $t\in[0,T]$.   Taking the inner product with $\A\y$ in \eqref{56} and using \eqref{appl1.4}, we obtain
\begin{align*}
	\lambda_0=\frac{-1}{\|\A\y\|_{\H}^2}\left[(\f,\A\y)-\mu\|\A\y\|_{\H}^2-b(\y,\y,\A\y)-(\mathcal{C}(\y),\A\y)\right]. 
\end{align*}
Therefore the feedback control becomes
\begin{align*}
	\mathbf{U}(t)=\frac{-\A\y(t)}{\|\A\y(t)\|_{\H}^2}\left\{(\f(t),\A\y(t))-\mu\|\A\y(t)\|_{\H}^2-b(\y(t),\y(t),\A\y(t))-(\mathcal{C}(\y(t)),\A\y(t))\right\},
\end{align*} 
for a.e. $t\in[0,T]$.} Thus, for $\y_0\in\D(\A)\cap\mathcal{K}$ and $\f\in\W^{1,1}(0,T;\H)$, and the feedback control given above, the closed loop problem \eqref{appl1.1} has a unique strong solution $\y\in\W^{1,\infty}(0,T;\H)\cap\C([0,T];\V)\cap\mathrm{L}^{\infty}(0,T;\D(\A))$ which satisfies (Corollary 2.54, \cite{VB22}) $$\y(t)\in\mathcal{K},\ \text{ for all }\ t\in[0,T].$$ We refer the interested readers to \cite{VBSS} for a discussion on some other important flow invariance problems like localized dissipation, pointwise velocity constraints, pointwise vorticity contraint, helicity invariance, etc.

\subsection{A time optimal control problem (\cite{TiOp1,TiOp2})}\label{TO} Let us discuss the following time optimal control for  CBF equations 
\begin{equation}\label{appl2.1}
	\left\{
	\begin{aligned}
		\frac{\d \y(t)}{\d t}+\mu\A\y(t)+\mathcal{B}(\y(t))+\beta\mathcal{C}(\y(t))&= \mathbf{U}(t),  \ \text{for a.e.} \ t>0, \ \text{in} \ \H,\\ 
		\y(0)&=\y_0,
	\end{aligned}
	\right.
\end{equation}	
Let $\kappa>0$ and we define the \textsl{class of controls}
\begin{align*}
	\mathcal{U}_\kappa=\{\mathbf{U}(\cdot)\in\mathrm{L}^{\infty}(\R^+;\H):\|\mathbf{U}(t)\|_{\H}\leq\kappa, \  \text{a.e.} \ t>0\}.
\end{align*} 
Let $\y_0,\y_1\in\D(\A)$ be arbitrary but fixed. A control $\mathbf{U}(\cdot)\in\mathcal{U}_\kappa$ is said to be admissible if it steers from the (initial state) $\y_0$ to the (target) $\y_1$ in a finite time $T$ along the trajectory $\y(t;\y_0,\mathbf{U}(\cdot))$ of \eqref{appl2.1} which starts from $\y_0.$ We assume that the class of all such controls (\textsl{admissible class}) is nonempty. Let $T(\y_0,\y_1)$ be the infimum of all such times and it is called \textsl{minimal time}, that is,
$	T(\y_0,\y_1):= \inf\limits_{T\in\R^+}\{T:\y(T;\y_0,\mathbf{U}(\cdot))=\y_1,\mathbf{U}(\cdot)\in\mathcal{U}_\kappa\}.$

A control $\mathbf{U}^*(\cdot)$ such that $\y(T(\y_0,\y_1);\y_0,\mathbf{U}^*(\cdot))=\y_1$ is called \textsl{time optimal control} and the time $T(\y_0,\y_1)$ is said to be \textsl{optimal time.} The pair $(\y^*,\mathbf{U}^*)$ is called the \textsl{time optimal pair}, where $\y^*=\y(t;\y_0,\mathbf{U}^*).$ 
We define a multi-valued operator $\mathrm{sgn}:\H\to\H$ by
\begin{align*}
	\mathrm{sgn}(\y)=
	\begin{cases}
		\frac{\y}{\|\y\|_{\H}}, &\text{ if } \ \y\neq\mathbf{0},\\
		\{\z\in\H:\|\z\|_{\H}\leq 1\}, &\text{ if } \ \y=\mathbf{0},
	\end{cases}
\end{align*}
which is the subdifferential of $\|\y\|_{\H}$ and hence it is maximal monotone in $\H\times\H$ (\cite[Theorem 2.1, Chapter 2]{VB2}). From \cite[Proposition 2.4, Chapter 4]{VB2}, the Yosida approximation of $\Theta:=\kappa\ \mathrm{sgn}(\cdot)$ is given by 
\begin{align*}
	\Theta_\lambda(\y)=\frac{1}{\lambda}\left(\y-\left(\I+\lambda\Theta\right)^{-1}\y\right)=
	\begin{cases}
		\frac{\kappa\y}{\|\y\|_{\H}}, &\text{ if } \  \|y\|_{\H}\geq\lambda,\\
		\frac{\kappa}{\lambda}\y, &\text{ if } \  \|y\|_{\H}<\lambda.
	\end{cases}
\end{align*}
From the above definition, we conclude that 
\begin{align*}
	(\A\y,\Theta_\lambda(\y-\y_1))\geq0,   \  \text{ for all }  \  \y\in\D(\A), \ \lambda>0,
\end{align*}
and therefore all the assumptions of Hypothesis \ref{hyp1} are satisfied. Thus we can apply Proposition \ref{thm1.1} for the system
\begin{equation}\label{appl2.2}
	\left\{
	\begin{aligned}
		\frac{\d \y(t)}{\d t}+\mu\A\y(t)+\mathcal{B}(\y(t))+\beta\mathcal{C}(\y(t))+\kappa(\mathrm{sgn}(\y(t)-\y_1))&\ni\boldsymbol{0},  \ \text{for a.e.} \ t>0, \\ 
		\y(0)&=\y_0.
	\end{aligned}
	\right.
\end{equation}	
Then the feedback law $\mathbf{U}(t)\in-\kappa(\mathrm{sgn}(\y(t)-\y_1)),$ for $t>0,$ ensures the existence of an admissible control $\mathbf{U}(\cdot)\in\mathcal{U}_\kappa$ for the system \eqref{appl2.1}, under the assumption that 
\begin{align}\label{appl2.3*}
	\|\mu\A\y_1+\mathcal{B}(\y_1)+\beta\mathcal{C}(\y_1)\|_{\H}<\kappa, 
\end{align}
and 
\begin{align}\label{appl2.3}
	\|\y_0-\y_1\|_{\H}\leq\frac{\kappa-\|\mu\A\y_1+\mathcal{B}(\y_1)+\beta\mathcal{C}(\y_1)\|_{\H}}{\varrho},
\end{align} 
for $\y_0,\y_1\in\D(\A),$ where $\varrho$ is given as in $\eqref{prop33}.$ In order to prove this, we  show that the system \eqref{appl2.2} has finite extinction property in $\H,$ that is, $\y(T)=\y_1$ for some $T>0$ (see \cite[section 5.3, Chapter 5]{VB2}). 
Let us set $\z(\cdot)=\y(\cdot)-\y_1.$ Then $\z(\cdot)$ satisfies 
\begin{equation*}
	\left\{
	\begin{aligned}
		\frac{\d \z(t)}{\d t} +\mu\A\z(t) &+\mathcal{B}(\z(t)+\y_1)-\mathcal{B}(\y_1) +\beta(\mathcal{C}(\z(t)+\y_1)-\mathcal{C}(\y_1))\nonumber\\+ \kappa\ \mathrm{sgn}(\z(t))&\ni-(\mu\A\y_1+\mathcal{B}(\y_1)+\beta\mathcal{C}(\y_1)),\ \text{for a.e.} \ t>0, \\ 
		\z(0)&=\y_0-\y_1.
	\end{aligned}
	\right.
\end{equation*}	
We assume that there exists no $T$ such that $\z(T)=\boldsymbol{0}.$ 
Taking the inner product with $\mathrm{sgn}(\z(\cdot))$ (using a smooth approximation of $\mathrm{sgn}(\z(\cdot))$ \cite{VB2}, one can justify), we get
\begin{align*}
	&\frac{1}{2}\frac{\d }{\d t}\|\z(t)\|_{\H}^2+\mu\|\nabla\z(t)\|_{\H}^2+\kappa\|\z(t)\|_{\H}+(\mathcal{C}(\z(t)+\y_1)-\mathcal{C}(\y_1),\z(t))\nonumber\\&=(\mathcal{B}(\y_1)-\mathcal{B}(\z(t)+\y_1),\z(t))-
	(\mu\A\y_1+\mathcal{B}(\y_1)+\beta\mathcal{C}(\y_1),\z(t)).
\end{align*}
From \eqref{2.30}, \eqref{2.23} and using a calculation similar to \eqref{Z6..}, we obtain
\begin{align*}
	\frac{1}{2}\frac{\d }{\d t}\|\z(t)\|_{\H}^2+\frac{\mu}{2}\|\nabla\z(t)\|_{\H}^2+\kappa\|\z(t)\|_{\H}\leq
	\varrho\|\z(t)\|_{\H}^2+\|\mu\A\y_1+\mathcal{B}(\y_1)+\beta\mathcal{C}(\y_1)\|_{\H}\|\z(t)\|_{\H},
\end{align*}
and we can rewrite $\frac{\d }{\d t}\|\z(t)\|_{\H}+\eta\leq\varrho\|\z(t)\|_{\H},$ where $\eta=\kappa-\|\mu\A\y_1+\mathcal{B}(\y_1)+\beta\mathcal{C}(\y_1)\|_{\H}>0$ and $\varrho$ is given in $\eqref{prop33}.$
By using variation of constant formula, we get 
\begin{align*}
	e^{-\varrho t}\|\z(t)\|_{\H}\leq\left(\|\z(0)\|_{\H}-\frac{\eta}{\varrho}\right)+\frac{\eta}{\varrho}	e^{-\varrho t}.
\end{align*}
This shows as $t\to\infty$ we are getting contradiction to the assumption \eqref{appl2.3}. This implies that $\z=\z(t)$ has finite extinction property in time $T>0$ and this proves the existence of an admissible control $\mathbf{U}(\cdot)\in\mathcal{U}_{\kappa}.$ The first order necessary and second order necessary and sufficient conditions of optimality will be discussed in a future work.

\subsection{Stabilizing feedback controllers}\label{St}
Let us consider the following  controlled CBF equations:
\begin{equation}\label{appl3.1}
	\left\{
	\begin{aligned}
		\frac{\d \y(t)}{\d t}+\mu\A\y(t)+\mathcal{B}(\y(t))+\beta\mathcal{C}(\y(t))&= \f_e+\mathbf{U}(t),  \ \text{for a.e.} \ t>0,\\ 
		\y(0)&=\y_0.
	\end{aligned}
	\right.
\end{equation}	
Let $\y_e\in\D(\A)$ be the steady-state (equilibrium) solution of \eqref{appl3.1}, that is, $\y_e$ satisfies
\begin{equation}\label{appl3.2}
	\mu\A\y_e+\mathcal{B}(\y_e)+\beta\mathcal{C}(\y_e)= \f_e \  \text{ in } \ \mathbb{T}^{d},
\end{equation}	
whose solvability results are available in \cite[Theorem 4.1]{MT4}. Let $\mathcal{K}\subset\H$ be a closed and convex set with $\boldsymbol{0}\in\mathcal{K}$ such that \eqref{appl1..1} is satisfied. We set $\z(\cdot)=\y(\cdot)-\y_e,$ then \eqref{appl3.1} becomes $\text{for a.e.} \ t>0,$ in $\H$
\begin{equation}\label{appl3.3}
	\left\{
	\begin{aligned}
		\frac{\d \z(t)}{\d t}+\mu\A\z(t)+\mathcal{B}(\z(t)+\y_e)-\mathcal{B}(\y_e)+\beta\mathcal{C}(\z(t)+\y_e)-\mathcal{C}(\y_e)&= \mathbf{U}(t),\\ 
		\z(0)&=\y_0-\y_e.
	\end{aligned}
	\right.
\end{equation}	
Let $\wi{\mathcal{B}}(\z(\cdot)):=\mathcal{B}(\z(\cdot)+\y_e)-\mathcal{B}(\y_e)$ and $\wi{\mathcal{C}}(\z(\cdot)):=\mathcal{C}(\z(\cdot)+\y_e)-\mathcal{C}(\y_e).$ Then \eqref{appl3.3} becomes
\begin{equation}\label{appl3.3.}
	\left\{
	\begin{aligned}
		\frac{\d \z(t)}{\d t} +\mu\A\z(t)+\wi{\mathcal{B}}(\z(t))+\wi{\mathcal{C}}(\z(t))&= \mathbf{U}(t),  \ \mbox{for a.e. } \ t>0 \ \mbox{ in }\ \H, \\ 
		\z(0)&=\y_0-\y_e.
	\end{aligned}
	\right.
\end{equation}	
Using Step IV in the proof of Proposition \ref{prop33}, it is clear that  $\wi{\mathcal{B}}(\cdot)$ and $\wi{\mathcal{C}}(\cdot)$ map from $\D(\A)$ to $\H$.  Therefore the operator $\mu\A+\wi{\mathcal{B}}(\cdot)+\beta\wi{\mathcal{C}}(\cdot)+\theta\I+\partial\I_{\mathcal{K}}(\cdot)$ is $m$-accretive in $\H\times\H$ for $\theta>0$ is sufficiently large. Let $\Phi(\w):=\theta\w+\partial\I_{\mathcal{K}}(\w),$ for all $\w\in\H.$ Since $\D(\partial\I_{\mathcal{K}})=\mathcal{K}$ and $\mathcal{K}\subset\H$, then from \cite[Chapter 2, Theorem 2.3]{VB2}, we can write for all $\w\in\H$  $$\Phi(\w)=\partial\left(\frac{\theta}{2}\|\w\|_{\H}^2+\I_{\mathcal{K}}(\w)\right).$$ Clearly $\mathbf{0}\in\D(\Phi)=\mathcal{K}$.  Also, from \cite[Chapter IV, Proposition 1.1, part (iv)]{VB1}, we have  $$(\Phi(\w),\A\w)\geq0.$$ Thus the Hypothesis \ref{hyp1} is satisfied.
Therefore, we can apply the existence and uniqueness result (see Proposition \ref{thm1.1}) for the inclusion problem in $\H\times\H$ 
\begin{equation}\label{appl3.4}
	\left\{
	\begin{aligned}
		\frac{\d \z(t)}{\d t} +\mu\A\z(t)+\wi{\mathcal{B}}(\z(t))+\wi{\mathcal{C}}(\z(t))+\theta\z(t)+\partial\I_{\mathcal{K}}(\z(t))&\ni \boldsymbol{0}, \ \text{for a.e.} \ t>0,\\ 
		\z(0)&=\y_0-\y_e.
	\end{aligned}
	\right.
\end{equation}	
 We intend to find a feedback controller $\mathbf{U}(\cdot)$ given by $\mathbf{U}(t)\in-\theta\z(t)-\partial\I_{\mathcal{K}}(\z(t)$ which statbilizes the equilibrium solution $\y_e$ exponentially under the invariance condition that $\y_0-\y_e\in\mathcal{K},$ then $\y(t)-\y_e\in\mathcal{K}$ for all $t\geq0.$ The stability part will be discussed in a future work.

\begin{appendix}
	\renewcommand{\thesection}{\Alph{section}}
	\numberwithin{equation}{section}
 \section{The case of $d=2,3$ and $r\in[1,3]$} \label{Appen.}\setcounter{equation}{0}
 
 The case of $d=2,3$ and $r\in[1,3]$ is considered in this section. We quantize the Navier-Stokes nonlinearity $\mathcal{B}(\cdot)$  and prove  monotonicity property. The authors in \cite{VBSS} took a $\V$-ball for quantization, while we are taking an $\wi\L^4$-ball.    Define the quantized nonlinearity as
\begin{equation}\label{Q1}
	\mathcal{B}_{\mathrm{N}}(\y) =
	\begin{cases}
		\mathcal{B}(\y), \   & \text{ if } \  \|\y\|_{\widetilde{\L}^{4}} \leq \mathrm{N},\\
		\biggl(\frac{\mathrm{N}}{\|\y\|_{\widetilde{\L}^{4}}}\biggr)^{4}\mathcal{B}(\y), \   &\text{ if } \  \|\y\|_{\widetilde{\L}^{4}} > \mathrm{N},
	\end{cases}
\end{equation}
where $\mathrm{N}\in\N^*:=\mathbb{N}\cup\{0\}.$ 
\begin{lemma}\label{lem3.1}
	The operator $\mathcal{B}_{\mathrm{N}}(\cdot):\V\to\V'$ satisfies
	\begin{align}\label{3.3}
		|\langle \mathcal{B}_{\mathrm{N}}(\y)-\mathcal{B}_{\mathrm{N}}(\z), \y-\z \rangle|\leq\frac{\mu}{2}\|\nabla(\y-\z)\|_{\H}^{2}+C_\mathrm{N}\|\y-\z\|_{\H}^{2}, \ \text{ for all } \ \y, \z\in\V,
	\end{align}
	where $\mu>0$ is the same as in the system \eqref{1}.
\end{lemma}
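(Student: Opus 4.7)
My plan is to recast the truncation in a divergence-free form which turns $\mathcal{B}_{\mathrm{N}}$ into the bilinear operator $\mathcal{B}$ applied to a renormalized field, and then reduce the estimate to a scalar Lipschitz bound followed by a Ladyzhenskaya-type interpolation.

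\textbf{Step 1 (renormalization).} I first introduce the scalar weight $\phi_{\mathrm{N}}(s):=\min\{1,(\mathrm{N}/s)^{4}\}$ for $s>0$, with $\phi_{\mathrm{N}}(0):=1$, so that $\mathcal{B}_{\mathrm{N}}(\y)=\phi_{\mathrm{N}}(\|\y\|_{\wi\L^{4}})\,\mathcal{B}(\y)$. Since $\phi_{\mathrm{N}}(\|\y\|_{\wi\L^{4}})$ is just a constant scalar, the renormalized field $\widetilde{\y}:=\sqrt{\phi_{\mathrm{N}}(\|\y\|_{\wi\L^{4}})}\,\y$ is still divergence-free, and a direct check yields
\begin{equation*}
\mathcal{B}_{\mathrm{N}}(\y)=\mathcal{P}\bigl[(\widetilde{\y}\cdot\nabla)\widetilde{\y}\bigr],\qquad \|\widetilde{\y}\|_{\wi\L^{4}}=\sqrt{\phi_{\mathrm{N}}(\|\y\|_{\wi\L^{4}})}\,\|\y\|_{\wi\L^{4}}\leq \mathrm{N}.
\end{equation*}

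\textbf{Step 2 (scalar Lipschitz bound).} Next I would decompose
\begin{equation*}
\widetilde{\y}-\widetilde{\z}=\sqrt{\phi_{\mathrm{N}}(\|\y\|_{\wi\L^{4}})}\,(\y-\z)+\Bigl(\sqrt{\phi_{\mathrm{N}}(\|\y\|_{\wi\L^{4}})}-\sqrt{\phi_{\mathrm{N}}(\|\z\|_{\wi\L^{4}})}\Bigr)\z;
\end{equation*}
the first piece is bounded in $\wi\L^{4}$ by $\|\y-\z\|_{\wi\L^{4}}$ since $\sqrt{\phi_{\mathrm{N}}}\le 1$. For the second, I would prove by case analysis on the positions of $s=\|\y\|_{\wi\L^{4}}$ and $t=\|\z\|_{\wi\L^{4}}$ relative to $\mathrm{N}$ the asymmetric scalar estimate
\begin{equation*}
\bigl|\sqrt{\phi_{\mathrm{N}}(s)}-\sqrt{\phi_{\mathrm{N}}(t)}\bigr|\,t\leq 2|s-t|,\qquad s,t\geq 0,
\end{equation*}
the mixed case $s\leq \mathrm{N}\leq t$ being closed via the AM--GM inequality $t+\mathrm{N}^{2}/t\geq 2\mathrm{N}\geq 2s$ (the symmetric mixed case and the case $s,t\geq \mathrm{N}$ being similar). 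Combined with the triangle inequality $\bigl|\|\y\|_{\wi\L^{4}}-\|\z\|_{\wi\L^{4}}\bigr|\leq \|\y-\z\|_{\wi\L^{4}}$, this delivers the Lipschitz bound $\|\widetilde{\y}-\widetilde{\z}\|_{\wi\L^{4}}\leq 3\,\|\y-\z\|_{\wi\L^{4}}$.

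\textbf{Step 3 (pairing and absorption).} Using $\widetilde{\y}\otimes \widetilde{\y}-\widetilde{\z}\otimes \widetilde{\z}=\widetilde{\y}\otimes(\widetilde{\y}-\widetilde{\z})+(\widetilde{\y}-\widetilde{\z})\otimes \widetilde{\z}$ together with integration by parts on the torus (no boundary contributions; $\widetilde{\y}$ and $\widetilde{\z}$ are divergence-free), H\"older's inequality combined with Steps~1--2 gives
\begin{equation*}
\bigl|\langle \mathcal{B}_{\mathrm{N}}(\y)-\mathcal{B}_{\mathrm{N}}(\z),\y-\z\rangle\bigr|\leq \bigl(\|\widetilde{\y}\|_{\wi\L^{4}}+\|\widetilde{\z}\|_{\wi\L^{4}}\bigr)\,\|\widetilde{\y}-\widetilde{\z}\|_{\wi\L^{4}}\,\|\nabla(\y-\z)\|_{\H}\leq 6\mathrm{N}\,\|\y-\z\|_{\wi\L^{4}}\,\|\nabla(\y-\z)\|_{\H}.
\end{equation*}
The conclusion \eqref{3.3} then follows from the Gagliardo--Nirenberg/Ladyzhenskaya interpolation $\|\boldsymbol{w}\|_{\wi\L^{4}}\leq C\bigl(\|\boldsymbol{w}\|_{\H}+\|\boldsymbol{w}\|_{\H}^{1-d/4}\|\nabla \boldsymbol{w}\|_{\H}^{d/4}\bigr)$, valid on $\mathbb{T}^{d}$ for $d\in\{2,3\}$, followed by a weighted Young inequality with exponents $\bigl(8/(4-d),\,8/(4+d)\bigr)$ that absorbs $\tfrac{\mu}{2}\|\nabla(\y-\z)\|_{\H}^{2}$ on the right-hand side and leaves $C_{\mathrm{N}}\|\y-\z\|_{\H}^{2}$, as required.

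\textbf{Main obstacle.} The real content of the argument is the asymmetric Lipschitz estimate of Step~2: a direct Lipschitz bound on $\sqrt{\phi_{\mathrm{N}}}$ only yields a Lipschitz constant of order $1/\mathrm{N}$, and would leave an unabsorbed factor $\|\z\|_{\wi\L^{4}}$ that is not uniformly bounded in $\z$. It is therefore essential to build one factor of the truncated norm into the scalar difference from the outset and pay for it via AM--GM in the mixed regimes; once this is available, the remainder of the proof is routine bookkeeping between H\"older, interpolation, and Young.
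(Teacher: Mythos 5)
Your argument is correct, and it takes a genuinely different route from the paper. The paper proves \eqref{3.3} by a direct three-case analysis on the positions of $\|\y\|_{\wi\L^{4}}$ and $\|\z\|_{\wi\L^{4}}$ relative to $\mathrm{N}$: in each case it splits $\mathcal{B}_{\mathrm{N}}(\y)-\mathcal{B}_{\mathrm{N}}(\z)$ into a ``difference of scalar coefficients'' term, controlled via Taylor's formula applied to $x\mapsto x^{4}$ (respectively the algebraic identity $1-(\mathrm{N}/\|\z\|_{\wi\L^{4}})^{4}\leq(\|\z\|_{\wi\L^{4}}^{4}-\|\y\|_{\wi\L^{4}}^{4})/\|\z\|_{\wi\L^{4}}^{4}$ in the mixed case), and a ``common coefficient'' term handled by the usual antisymmetry trick $\langle\mathcal{B}(\y)-\mathcal{B}(\z),\y-\z\rangle=-\langle\mathcal{B}(\y-\z,\y-\z),\z\rangle$ followed by H\"older, Ladyzhenskaya and Young. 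You instead absorb the truncation into the field by passing to $\widetilde{\y}=\sqrt{\phi_{\mathrm{N}}(\|\y\|_{\wi\L^{4}})}\,\y$, which reduces everything to the untruncated bilinear form evaluated at fields with $\wi\L^{4}$-norm at most $\mathrm{N}$, plus a single scalar Lipschitz lemma; I checked your asymmetric estimate $|\sqrt{\phi_{\mathrm{N}}(s)}-\sqrt{\phi_{\mathrm{N}}(t)}|\,t\leq 2|s-t|$ in all four regimes (it holds, e.g.\ in the regime $s,t\geq\mathrm{N}$ one needs $\mathrm{N}^{2}(s+t)\leq 2s^{2}t$, which follows from $\mathrm{N}\leq s$ and $\mathrm{N}\leq t$), and the final interpolation/Young step with exponents $8/(4-d)$ and $8/(4+d)$ closes correctly for $d\in\{2,3\}$. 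What your approach buys is a unified treatment of all three cases and explicit constants ($\|\widetilde{\y}-\widetilde{\z}\|_{\wi\L^{4}}\leq 3\|\y-\z\|_{\wi\L^{4}}$, hence a prefactor $6\mathrm{N}$); what the paper's buys is that it never leaves the standard trilinear-form calculus, so it recycles the estimates already established in Section 3 verbatim. One small point of care in your Step 3: since the test function $\y-\z$ is not $\widetilde{\y}-\widetilde{\z}$, the antisymmetry cancellation is unavailable, and you correctly compensate by using the divergence form $\nabla\cdot(\widetilde{\y}\otimes\widetilde{\y})$ together with the uniform bound $\|\widetilde{\y}\|_{\wi\L^{4}},\|\widetilde{\z}\|_{\wi\L^{4}}\leq\mathrm{N}$ — this is exactly where the quantization earns its keep, and without it the argument would fail.
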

\begin{proof}
	\vskip 2mm
	\noindent
	Without loss of generality, one may assume that $\|\y\|_{\wi\L^4}\leq \|\z\|_{\wi\L^4}. $	Therefore, we need to consider the following three cases: 
	\vskip 2mm
	\noindent
	\textsl{Case I}: \textit{$\|\y\|_{\widetilde{\L}^{4}},\|\z\|_{\widetilde{\L}^{4}} \leq \mathrm{N}.$} Using \eqref{b0},   H\"older's, Ladyzhenskaya's and Young's inequalities, we have 
	\begin{align}\label{35}
		|\langle \mathcal{B}_{\mathrm{N}}(\y)-\mathcal{B}_{\mathrm{N}}(\z), \y-\z \rangle| &=|\langle\mathcal{B}(\y)-\mathcal{B}(\z),\y-\z\rangle|= |\langle\mathcal{B}(\y-\z),\z\rangle|\nonumber\\&\leq C\|\y-\z\|_{\widetilde{\L}^{4}} \|\nabla(\y-\z)\|_{\H} \|\z\|_{\widetilde{\L}^{4}}\leq \frac{\mu}{2}\|\nabla(\y-\z)\|_{\H}^{2}+C_\mathrm{N}\|\y-\z\|_{\H}^{2}.
	\end{align}
	\vskip 2mm
	\noindent
	\textsl{Case II: $\|\y\|_{\widetilde{\L}^{4}},\|\z\|_{\widetilde{\L}^{4}} > \mathrm{N}.$} Let us first consider 
	\begin{align}\label{e0}
	&	\langle \mathcal{B}_{\mathrm{N}}(\y)-\mathcal{B}_{\mathrm{N}}(\z), \y-\z \rangle\nonumber\\& = \biggl \langle\biggl(\frac{\mathrm{N}}{\|\y\|_{\widetilde{\L}^{4}}}\biggr)^{4}\mathcal{B}(\y)-\biggl(\frac{\mathrm{N}}{\|\z\|_{\widetilde{\L}^{4}}}\biggr)^{4}\mathcal{B}(\z),\y-\z\biggl\rangle\nonumber \\&=\biggl[\biggl(\frac{\mathrm{N}}{\|\y\|_{\widetilde{\L}^{4}}}\biggr)^{4}-\biggl(\frac{\mathrm{N}}{\|\z\|_{\widetilde{\L}^{4}}}\biggr)^{4}\biggr]\langle\mathcal{B}(\y),\y-\z\rangle+\biggl(\frac{\mathrm{N}}{\|\z\|_{\widetilde{\L}^{4}}}\biggr)^{4} \langle \mathcal{B}(\y)-\mathcal{B}(\z) , \y-\z \rangle. 
	\end{align}
	From  \eqref{b0}, Taylor's formula, H\"older's, Ladyzhenskaya's and Young's inequalities, we obtain 
	\begin{align*}
		&	\biggl|\biggl(\frac{\mathrm{N}}{\|\y\|_{\widetilde{\L}^{4}}}\biggr)^{4}-	\biggl(\frac{\mathrm{N}}{\|\z\|_{\widetilde{\L}^{4}}}\biggr)^{4}\biggr| |\langle \mathcal{B}(\y),\y-\z \rangle|\nonumber\\&\leq 4\left(\biggl(\frac{\mathrm{N}}{\|\y\|_{\widetilde{\L}^{4}}}\biggr)+\biggl(\frac{\mathrm{N}}{\|\z\|_{\widetilde{\L}^{4}}}\biggr)\right)^3\left|\frac{\mathrm{N}}{\|\y\|_{\widetilde{\L}^{4}}}-\frac{\mathrm{N}}{\|\z\|_{\widetilde{\L}^{4}}}\right||\langle\B(\y,\y-\z),\z\rangle|\nonumber\\&\leq C\mathrm{N}\|\y-\z\|_{\wi\L^4}\|\nabla(\y-\z)\|_{\H}\leq \frac{\mu}{4}\|\nabla(\y-\z)\|_{\H}^{2}+C_\mathrm{N}\|\y-\z\|_{\H}^{2}.
	\end{align*}
	A calculation similar to \eqref{35} yields 
	\begin{align}
		&	\left|\biggl(\frac{\mathrm{N}}{\|\z\|_{\widetilde{\L}^{4}}}\biggr)^{4} \langle \mathcal{B}(\y)-\mathcal{B}(\z) , \y-\z \rangle\right|\leq \frac{\mu}{4}\|\nabla(\y-\z)\|_{\H}^{2}+C_\mathrm{N}\|\y-\z\|_{\H}^{2}.
	\end{align}
	Combining the above estimates imply \eqref{3.3}. 
	
	\vskip 2mm
	\noindent
	\textsl{Case III: $\|\y\|_{\widetilde{\L}^{4}}\leq\mathrm{N}$ and $\|\z\|_{\widetilde{\L}^{4}} > \mathrm{N}.$} One can rewrite 
	\begin{align*}
		\langle \mathcal{B}_{\mathrm{N}}(\y)-\mathcal{B}_{\mathrm{N}}(\z), \y-\z \rangle& = \biggl \langle\mathcal{B}(\y)-\biggl(\frac{\mathrm{N}}{\|\z\|_{\widetilde{\L}^{4}}}\biggr)^{4}\mathcal{B}(\z),\y-\z\biggr\rangle\nonumber \\&=\biggl[1-\biggl(\frac{\mathrm{N}}{\|\z\|_{\widetilde{\L}^{4}}}\biggr)^{4}\biggr]\langle\mathcal{B}(\y),\y-\z\rangle+\biggl(\frac{\mathrm{N}}{\|\z\|_{\widetilde{\L}^{4}}}\biggr)^{4} \langle \mathcal{B}(\y)-\mathcal{B}(\z) , \y-\z \rangle.
	\end{align*}
	As  $1-\biggl(\frac{\mathrm{N}}{\|\z\|_{\widetilde{\L}^{4}}}\biggr)^{4}=\frac{\|\z\|_{\widetilde{\L}^{4}}^{4}-\mathrm{N}^{4}}{\|\z\|_{\widetilde{\L}^{4}}^{4}}\leq\frac{\|\z\|_{\widetilde{\L}^{4}}^{4}-\|\y\|_{\widetilde{\L}^{4}}^{4}}{\|\z\|_{\widetilde{\L}^{4}}^{4}}$, one can use the estimates in the previous cases to conclude \eqref{3.3}. 
\end{proof}
The following results regarding $m$-accretivity of the operators can be proved in a similar fashion as we prove Propositions \ref{prop33}-\ref{prop3.3}.
\begin{proposition}\label{prop3.2}
	For $d=2,3$ and $1\leq r\leq 3$, define the  operator $\Upsilon_{\mathrm{N}}:\mathrm{D}(\Upsilon_{\mathrm{N}})\to\H$  by $$\Upsilon_{\mathrm{N}}(\cdot):= \mu\mathrm{A}+\mathcal{B}_{\mathrm{N}}(\cdot)+\beta\mathcal{C}(\cdot), \  \text{with} \  \ \mathrm{D}(\Upsilon_{\mathrm{N}})=\mathrm{D}(\mathrm{A}).$$ Then there exists $\eta_{\mathrm{N}}>0$ such that $\Upsilon_{\mathrm{N}}+\eta_{\mathrm{N}}\mathrm{I}$ is $m$-accretive in $\H\times\H.$
\end{proposition}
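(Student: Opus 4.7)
The plan is to mirror the four-step template used for Proposition \ref{prop33}, but in a form simplified by two facts: the Sobolev embedding $\V\hookrightarrow\wi\L^{r+1}$ holds for $d\in\{2,3\}$ and $r\in[1,3]$, so the ambient reflexive space is just $\V$; and the quantization in \eqref{Q1} turns the convective nonlinearity into a bounded, locally Lipschitz perturbation, controlled uniformly by $\mathrm{N}$ through Lemma \ref{lem3.1}.

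\textbf{Step I (Monotonicity).} I would combine the identity $\langle\A(\y-\z),\y-\z\rangle=\|\nabla(\y-\z)\|_{\H}^2$, the monotonicity estimate \eqref{2.23} for $\beta\mathcal{C}$, and Lemma \ref{lem3.1} to obtain
\begin{align*}
\langle(\Upsilon_{\mathrm{N}}+\eta_{\mathrm{N}}\I)\y-(\Upsilon_{\mathrm{N}}+\eta_{\mathrm{N}}\I)\z,\y-\z\rangle\geq\tfrac{\mu}{2}\|\nabla(\y-\z)\|_{\H}^2+(\eta_{\mathrm{N}}-C_{\mathrm{N}})\|\y-\z\|_{\H}^2,
\end{align*}
which is nonnegative whenever $\eta_{\mathrm{N}}\geq C_{\mathrm{N}}$, where $C_{\mathrm{N}}$ is the constant supplied by Lemma \ref{lem3.1}. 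This fixes the accretivity constant.

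\textbf{Step II (Demicontinuity).} For a sequence $\y^n\to\y$ in $\V$, the embedding $\V\hookrightarrow\wi\L^4$ (valid for $d\leq 4$) gives strong convergence in $\wi\L^4$, hence the scalar truncation factor $\phi_{\mathrm{N}}(\y):=\min\{1,(\mathrm{N}/\|\y\|_{\wi\L^4})^4\}$ converges (it is continuous on $\wi\L^4$). Combined with the standard weak continuity of $\mathcal{B}(\cdot)$ from $\V$ to $\V'$ and a Taylor-type estimate for $\mathcal{C}$ identical to Step II of Proposition \ref{prop33} (using $\V\hookrightarrow\wi\L^{r+1}$), this yields $\langle(\Upsilon_{\mathrm{N}}+\eta_{\mathrm{N}}\I)\y^n,\z\rangle\to\langle(\Upsilon_{\mathrm{N}}+\eta_{\mathrm{N}}\I)\y,\z\rangle$ for every test $\z\in\V$, hence demicontinuity and in particular hemicontinuity.

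\textbf{Step III (Coercivity) and Step IV ($m$-accretivity).} From \eqref{b0} and \eqref{Q1} we have $\langle\mathcal{B}_{\mathrm{N}}(\y),\y\rangle=\phi_{\mathrm{N}}(\y)\langle\mathcal{B}(\y),\y\rangle=0$, so
\begin{align*}
\langle(\Upsilon_{\mathrm{N}}+\eta_{\mathrm{N}}\I)\y,\y\rangle=\mu\|\nabla\y\|_{\H}^2+\beta\|\y\|_{\wi\L^{r+1}}^{r+1}+\eta_{\mathrm{N}}\|\y\|_{\H}^2\geq\min\{\mu,\eta_{\mathrm{N}}\}\|\y\|_{\V}^2,
\end{align*}
which delivers coercivity from $\V$ to $\V'$. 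With monotonicity, hemicontinuity and coercivity of the reflexive-space operator $\Upsilon_{\mathrm{N}}+\eta_{\mathrm{N}}\I:\V\to\V'$ in hand, I would invoke \cite[Example 2.3.7]{OPHB} (exactly as in Step IV of Proposition \ref{prop33}) to conclude that its $\H$-realization is $m$-accretive. The domain identification $\D(\Upsilon_{\mathrm{N}})=\D(\A)$ then follows by checking that for $\y\in\D(\A)$ both $\mathcal{B}_{\mathrm{N}}(\y)$ and $\beta\mathcal{C}(\y)$ lie in $\H$: in 2D one uses $\D(\A)\hookrightarrow\wi\L^\infty$, in 3D one uses $\D(\A)\hookrightarrow\wi\L^6$ together with $r\leq 3$, and the quantization factor only improves matters for $\mathcal{B}_{\mathrm{N}}$.

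I do not expect a serious obstacle here because the quantization is precisely designed to bypass the $r>3$ argument needed in Proposition \ref{prop33}; the only delicate point is Step II, where one must handle the crossover regime $\|\y\|_{\wi\L^4}=\mathrm{N}$, but this is resolved at once since $\phi_{\mathrm{N}}$ is continuous (not merely piecewise continuous) in its argument.
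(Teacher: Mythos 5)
The paper does not write out a separate proof of this proposition; it states that it ``can be proved in a similar fashion'' as Propositions \ref{prop33}--\ref{prop3.3}, and your overall strategy (the same four-step template with Lemma \ref{lem3.1} replacing the damping-absorbs-convection estimate \eqref{2.30}) is exactly the intended route. Steps I--III are fine: the quasi-monotonicity with $\eta_{\mathrm{N}}\geq C_{\mathrm{N}}$, the demicontinuity via the continuity of the truncation factor on $\wi\L^4$ together with $\V\hookrightarrow\wi\L^4$ for $d\leq 4$, and the coercivity using $\langle\mathcal{B}_{\mathrm{N}}(\y),\y\rangle=0$ all go through as you describe.

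The gap is in Step IV. Invoking \cite[Example 2.3.7]{OPHB} for the monotone, hemicontinuous, coercive operator $\Upsilon_{\mathrm{N}}+\eta_{\mathrm{N}}\I:\V\to\V'$ gives maximal monotonicity of its $\H$-realization on the domain $\{\y\in\V:\Upsilon_{\mathrm{N}}(\y)\in\H\}$, not on $\D(\A)$. Your domain check only establishes the inclusion $\D(\A)\subseteq\{\y\in\V:\mathcal{B}_{\mathrm{N}}(\y),\,\mathcal{C}(\y)\in\H\}$, i.e.\ that the operator maps $\D(\A)$ into $\H$; it does not show that the solution $\y\in\V$ of the resolvent equation $\y+\Upsilon_{\mathrm{N}}(\y)+\eta_{\mathrm{N}}\y=\f$, $\f\in\H$, actually satisfies $\A\y\in\H$. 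That elliptic regularity assertion is precisely what distinguishes ``$m$-accretive in $\H\times\H$ with domain $\D(\A)$'' from maximal monotonicity in the Gelfand triple, and it is the part of the paper's Step IV that you have dropped: there it is obtained by testing with $\A\y$, using the identity \eqref{3} to get $(\mathcal{C}(\y),\A\y)\geq 0$, and absorbing the convective term, which for $r\in[1,3]$ requires an interpolation/Agmon bound of the form $|(\mathcal{B}_{\mathrm{N}}(\y),\A\y)|\leq\frac{\mu}{2}\|\A\y\|_{\H}^2+C_{\mathrm{N}}(1+\|\y\|_{\V}^2)^{\sigma}$ (this is what produces the cubic exponent in \eqref{111}--\eqref{222} of Proposition \ref{propA3}). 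Alternatively one can reproduce the splitting $\mathcal{F}^1+\mathcal{F}^2$ and the perturbation theorem \cite[Chapter II, Theorem 3.5]{VB1} as in Proposition \ref{prop33}. Either way, an a priori bound on $\|\A\y\|_{\H}$ in terms of $\|\Upsilon_{\mathrm{N}}(\y)+\eta_{\mathrm{N}}\y\|_{\H}$ and $\|\y\|_{\H}$ must be supplied before the resolvent solution can be placed in $\D(\A)$; without it the claimed $m$-accretivity on $\D(\A)$ is not proved.
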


\begin{proposition}\label{propA3}
	Let $\mathrm{N}\in\N^{*}$ be fixed. Let $\Phi\subset\H\times\H$ be a maximal monotone operator satisfying Hypothesis \ref{hyp1}. Define the multi-valued operator  $\mathfrak{A}_\mathrm{N}:\mathrm{D}(\mathfrak{A}_\mathrm{N})\to\H$ by 
	\begin{equation*}
		\mathfrak{A}_\mathrm{N}(\cdot) = \mu\mathrm{A} +\mathcal{B}_\mathrm{N}(\cdot)+\beta\mathcal{C}(\cdot)+\Phi(\cdot)+\eta_\mathrm{N}\I
	\end{equation*}
	with domain $\mathrm{D}(\mathfrak{A}_\mathrm{N})=\{y\in\H: \varnothing \neq \mathfrak{A}_\mathrm{N}(\y)\subset\H\}$. Then $\mathrm{D}(\mathfrak{A}_\mathrm{N}) = \mathrm{D}(\mathrm{A})\cap\mathrm{D}(\Phi)$ and $\mathfrak{A}_\mathrm{N}$ is a maximal monotone operator in $\H\times\H,$ where $\eta_{\mathrm{N}}$ is as in Proposition \ref{prop3.2}. 
	Moreover, there exists a constant $C$ such that 
	\begin{align}\label{111}
		\|\mathrm{A}\w\|_{\H}^{2}&\leq C(1+\|\w\|_{\H}^{2}+\|\mu\mathrm{A}\w +\mathcal{B}_{\mathrm{N}}(\w)+\beta\mathcal{C}(\w)+\Phi_{\lambda}(\w)\|_{\H}^{2})^{3},
	\end{align}
	for every $\w\in\mathrm{D}(\mathrm{A}) \ \text{and for every} \ \lambda>0.$ Furthermore, we have 
	\begin{align}\label{222}
		\|\mathrm{A}\w\|_{\H}^{2}&\leq C(1+\|\w\|_{\H}^{2}+\|\mu\mathrm{A}\w +\mathcal{B}_{\mathrm{N}}(\w)+\beta\mathcal{C}(\w)+\xi\|_{\H}^{2})^{3},
	\end{align}
	for every $\w\in\mathrm{D}(\mathrm{A})\cap\mathrm{D}(\Phi)$ and for every $\xi\in\Phi(\w).$
\end{proposition}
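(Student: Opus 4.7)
The plan is to imitate the proof of Proposition \ref{prop3.3} almost verbatim, exploiting the fact that by Proposition \ref{prop3.2} the single-valued operator $\mathcal{F}_\mathrm{N}(\cdot) := \mu\A + \mathcal{B}_\mathrm{N}(\cdot) + \beta\mathcal{C}(\cdot) + \eta_\mathrm{N}\I$ is $m$-accretive on $\H\times\H$ with $\D(\mathcal{F}_\mathrm{N}) = \D(\A)$. Since $\mathfrak{A}_\mathrm{N} = \mathcal{F}_\mathrm{N} + \Phi$ is a sum of two monotone operators it is monotone, and $\D(\A)\cap\D(\Phi) \subseteq \D(\mathfrak{A}_\mathrm{N})$ is immediate. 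The main task is to verify the range condition $\mathrm{R}(\I+\mathfrak{A}_\mathrm{N}) = \H$; the reverse inclusion $\D(\mathfrak{A}_\mathrm{N}) \subseteq \D(\A)\cap\D(\Phi)$ will drop out of the approximation argument.

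For arbitrary $\f \in \H$, I would regularize by replacing $\Phi$ with its Yosida approximation $\Phi_\lambda$, reducing to
\begin{equation*}
\y_\lambda + \mu\A\y_\lambda + \mathcal{B}_\mathrm{N}(\y_\lambda) + \beta\mathcal{C}(\y_\lambda) + \Phi_\lambda(\y_\lambda) + \eta_\mathrm{N}\y_\lambda = \f.
\end{equation*}
Existence of $\y_\lambda \in \D(\A)$ follows because $\mathcal{F}_\mathrm{N} + \I$ is $m$-accretive and $\Phi_\lambda$ is everywhere-defined, monotone, and demicontinuous; uniqueness uses Lemma \ref{lem3.1} together with \eqref{2.23}. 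Testing against $\y_\lambda$ and bounding $-(\Phi_\lambda(\boldsymbol{0}),\y_\lambda)$ as in \eqref{M1} yields the uniform estimate $\|\y_\lambda\|_{\V\cap\wi\L^{r+1}}^{2} \leq C(1+\|\f\|_\H^2)$, exactly as in \eqref{e1.2}.

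The crucial step is to bound $\|\A\y_\lambda\|_\H$ and $\|\Phi_\lambda(\y_\lambda)\|_\H$ uniformly. Testing against $\A\y_\lambda$ and using Hypothesis \ref{hyp1}(H.3) together with the identity \eqref{3}, the only new term to control is $|(\mathcal{B}_\mathrm{N}(\y_\lambda),\A\y_\lambda)|$; the quantization bound $\|\mathcal{B}_\mathrm{N}(\y)\|_\H \leq \mathrm{N}\,\|\nabla\y\|_{\wi\L^4}$ combined with an Agmon-type interpolation and Young's inequality absorbs it into $\frac{\mu}{4}\|\A\y_\lambda\|_\H^2$ plus a polynomial correction in $\|\y_\lambda\|_\V$. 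Testing next against $\Phi_\lambda(\y_\lambda)$, I estimate $|(\mathcal{B}_\mathrm{N}(\y_\lambda),\Phi_\lambda(\y_\lambda))|$ by the same Agmon mechanism and $|(\mathcal{C}(\y_\lambda),\Phi_\lambda(\y_\lambda))|$ by $\|\mathcal{C}(\y_\lambda)\|_\H \leq \|\y_\lambda\|_{\wi\L^{2r}}^{r} \lesssim \|\y_\lambda\|_\V^r$ (Sobolev embedding $\V \hookrightarrow \wi\L^6$, valid for $r \in [1,3]$). Closing the loop yields $\|\A\y_\lambda\|_\H^2 + \|\Phi_\lambda(\y_\lambda)\|_\H^2 \leq C(1+\|\f\|_\H^2)^{3}$ uniformly in $\lambda$. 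The cubic exponent is the main technical obstacle — it is tight in $d=3$ due to the $d/4$ Agmon exponent, and the Young's-inequality splittings must be tuned delicately so that residual fractions of $\|\A\y_\lambda\|_\H^2$ and $\|\Phi_\lambda(\y_\lambda)\|_\H^2$ remain on the left after absorption.

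With the uniform bounds in hand, the Banach--Alaoglu theorem yields a subsequence with $\y_{\lambda_j} \rightharpoonup \y$ in $\V$, $\A\y_{\lambda_j} \rightharpoonup \A\y$ in $\H$, $\Phi_{\lambda_j}(\y_{\lambda_j}) \rightharpoonup \f_1$ in $\H$, $\mathcal{F}_\mathrm{N}(\y_{\lambda_j}) \rightharpoonup \f_2$ in $\H$, and (by compactness of $\D(\A) \hookrightarrow \V$) $\y_{\lambda_j} \to \y$ strongly in $\V$. Then Step IV of the proof of Proposition \ref{prop3.3} — subtracting the equations at $\lambda$ and $\widetilde{\lambda}$, testing against $\y_\lambda - \y_{\widetilde{\lambda}}$, and invoking the monotonicity of $\mathcal{F}_\mathrm{N}+\I$ (Proposition \ref{prop3.2}) together with \cite[Chapter 2, Proposition 1.3(iv) and Lemma 1.3]{VB2} — identifies $\f_1 \in \Phi(\y)$ and $\f_2 = \mathcal{F}_\mathrm{N}(\y)$, so $\y$ solves $\y + \mathfrak{A}_\mathrm{N}(\y) \ni \f$ with $\y \in \D(\A)\cap\D(\Phi)$. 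The estimates \eqref{111}--\eqref{222} are then read off exactly as in Step V of Proposition \ref{prop3.3}: for arbitrary $\w \in \D(\A)$ (respectively $\w \in \D(\A)\cap\D(\Phi)$ with $\xi \in \Phi(\w)$), set $\g_\lambda := \mathcal{F}_\mathrm{N}(\w) + \Phi_\lambda(\w) + \w$, apply the a priori bound above to the solution of the corresponding equation driven by $\g_\lambda$, and pass $\lambda \to 0$ using the strong--weak closedness of $\Phi$.
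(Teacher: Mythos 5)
Your proposal is correct and takes essentially the same route the paper intends: the paper supplies no separate proof of this proposition, saying only that it ``can be proved in a similar fashion'' as Propositions \ref{prop33}--\ref{prop3.3}, and your write-up carries out precisely that adaptation. The two genuinely new ingredients you identify — absorbing the convective term via the quantization bound $\|\mathcal{B}_{\mathrm{N}}(\w)\|_{\H}\leq \mathrm{N}\|\nabla\w\|_{\widetilde{\L}^{4}}$ together with Gagliardo--Nirenberg/Agmon interpolation, and controlling $\|\mathcal{C}(\w)\|_{\H}\lesssim\|\w\|_{\V}^{r}$ through the Sobolev embedding $\V\hookrightarrow\widetilde{\L}^{2r}$ (valid since $r\leq 3$), which is what produces the exponent $3$ in \eqref{111}--\eqref{222} — are exactly the right ones.
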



Let us now consider the following approximate equation: 
\begin{equation}\label{apq}
	\left\{
	\begin{aligned}
		\frac{\d \y_{\mathrm{N}}(t)}{\d t}+\mu\A\y_{\mathrm{N}}(t)+\B_{\mathrm{N}}(\y_{\mathrm{N}}(t))+\beta\mathcal{C}(\y_{\mathrm{N}}(t))+\Phi(\y_{\mathrm{N}}(t))&\ni \f(t), \ \text{ a.e. } \ t\in[0,T], \\
		\y_{\mathrm{N}}(0)&=\y_0. 
	\end{aligned}
	\right.
\end{equation}

Using Proposition \ref{propA3}, one can establish the following result similar to Proposition \ref{prop3.5}. 

\begin{proposition}
	Let $\Phi\subset \H\times \H$ satisfy Hypothesis \ref{hyp1}. Let $\f\in\W^{1,1}(0,T;\H)$ and $\y_0\in\D(\A)\cap\D(\Phi)$. Then there exists a unique strong solution 
	\begin{align*}\y_{\mathrm{N}}\in \W^{1,\infty}(0,T;\H)\cap \mathrm{L}^{\infty}(0,T;\D(\A))\cap \C([0,T];\V)\end{align*}to the problem \eqref{apq}. Furthermore, $\y_{\mathrm{N}}$ is right differentiable, $\frac{\d^+\y_{\mathrm{N}}}{\d t} $ is right continuous, and 
	\begin{equation*}
		\frac{\d^+ \y_{\mathrm{N}}(t)}{\d t}+\left(\mu\A\y_{\mathrm{N}}(t)+\B_{\mathrm{N}}(\y_{\mathrm{N}}(t))+\beta\mathcal{C}(\y_{\mathrm{N}}(t))+\Phi(\y_{\mathrm{N}}(t))-\f(t)\right)^0= \mathbf{0}, \ \text{ for all  } \ t\in[0,T]. 
	\end{equation*}
\end{proposition}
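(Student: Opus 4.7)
The plan is to obtain this proposition as a direct consequence of Proposition \ref{propA3} together with the abstract theory of Cauchy problems governed by quasi-$m$-accretive operators \cite[Theorems 1.4--1.6, pp.~214--216]{VB2}, in complete parallel with how Proposition \ref{prop3.5} was derived from Propositions \ref{prop33}--\ref{prop3.3}. By Proposition \ref{propA3}, the operator $\mathfrak{A}_{\mathrm{N}} = \mu\A + \mathcal{B}_{\mathrm{N}}(\cdot) + \beta\mathcal{C}(\cdot) + \Phi(\cdot) + \eta_{\mathrm{N}}\I$ is maximal monotone in $\H\times\H$ with $\D(\mathfrak{A}_{\mathrm{N}}) = \D(\A)\cap\D(\Phi)$; equivalently, $\mu\A + \mathcal{B}_{\mathrm{N}}(\cdot) + \beta\mathcal{C}(\cdot) + \Phi(\cdot)$ is quasi-$m$-accretive on $\H$ with shift $\eta_{\mathrm{N}}$. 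The problem \eqref{apq} can thus be rewritten as
\begin{equation*}
\frac{\d\y_{\mathrm{N}}(t)}{\d t} + \mathfrak{A}_{\mathrm{N}}(\y_{\mathrm{N}}(t)) - \eta_{\mathrm{N}}\y_{\mathrm{N}}(t) \ni \f(t), \qquad \y_{\mathrm{N}}(0)=\y_0,
\end{equation*}
and, since $\y_0 \in \D(\mathfrak{A}_{\mathrm{N}})$ and $\f\in\W^{1,1}(0,T;\H)$, the cited abstract theorems produce a unique strong solution $\y_{\mathrm{N}} \in \W^{1,\infty}(0,T;\H)$ with $\y_{\mathrm{N}}(t)\in\D(\A)\cap\D(\Phi)$ for every $t\in[0,T]$, automatically satisfying the right-differentiability and the minimal-selection identity (the bounded linear shift $-\eta_{\mathrm{N}}\y_{\mathrm{N}}$ is absorbed into the right-hand side, which remains $\W^{1,1}$ regular because $\y_{\mathrm{N}}\in\W^{1,\infty}(0,T;\H)$).

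To upgrade the regularity to the form stated, I would argue as in Proposition \ref{prop3.5}. The $\W^{1,\infty}(0,T;\H)$ bound furnishes a uniform control of $\frac{\d\y_{\mathrm{N}}}{\d t}$ in $\H$. Picking the measurable selection $\xi(t)\in\Phi(\y_{\mathrm{N}}(t))$ for which the inclusion becomes an equation, the quantity $\mu\A\y_{\mathrm{N}}(t) + \mathcal{B}_{\mathrm{N}}(\y_{\mathrm{N}}(t)) + \beta\mathcal{C}(\y_{\mathrm{N}}(t)) + \xi(t) = \f(t) - \frac{\d\y_{\mathrm{N}}}{\d t}(t)$ is uniformly bounded in $\H$. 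Feeding this into the a priori estimate \eqref{222} of Proposition \ref{propA3} yields $\y_{\mathrm{N}}\in\mathrm{L}^\infty(0,T;\D(\A))$. The continuity in $\V$ then follows by the standard interpolation $\W^{1,\infty}(0,T;\H)\cap\mathrm{L}^\infty(0,T;\D(\A))\hookrightarrow\C([0,T];\V)$ via $[\H,\D(\A)]_{1/2}=\V$ (alternatively, by an Aubin--Lions argument against a countable dense family in $\V$).

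Uniqueness admits a direct proof independent of the abstract machinery: for two strong solutions $\y_{\mathrm{N}}^1,\y_{\mathrm{N}}^2$, testing the difference equation against $\y_{\mathrm{N}}^1-\y_{\mathrm{N}}^2$, exploiting the monotonicity of $\Phi$ and of $\mathcal{C}$ (see \eqref{2.23}), and applying the quasi-monotonicity estimate \eqref{3.3} of Lemma \ref{lem3.1} gives
\begin{equation*}
\tfrac{1}{2}\tfrac{\d}{\d t}\|\y_{\mathrm{N}}^1-\y_{\mathrm{N}}^2\|_{\H}^2 + \tfrac{\mu}{2}\|\nabla(\y_{\mathrm{N}}^1-\y_{\mathrm{N}}^2)\|_{\H}^2 \leq C_{\mathrm{N}}\|\y_{\mathrm{N}}^1-\y_{\mathrm{N}}^2\|_{\H}^2,
\end{equation*}
and Gronwall's lemma concludes.

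No step is genuinely hard; the only bookkeeping item requiring care is the consistent handling of the shift $\eta_{\mathrm{N}}$ when transferring the $m$-accretivity of $\mathfrak{A}_{\mathrm{N}}$ into the evolution problem \eqref{apq} and when invoking the minimal-selection formulation, since one must verify that replacing $\mathfrak{A}_{\mathrm{N}}$ by $\mathfrak{A}_{\mathrm{N}}-\eta_{\mathrm{N}}\I$ preserves the class of right-continuous, right-differentiable strong solutions; this is standard because $\eta_{\mathrm{N}}\I$ is a Lipschitz perturbation.
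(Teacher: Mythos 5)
Your proposal is correct and follows essentially the same route as the paper: the paper also obtains this proposition by combining the maximal monotonicity of $\mathfrak{A}_{\mathrm{N}}$ from Proposition \ref{propA3} with the abstract theory of \cite[Theorems 1.4--1.6, pp.~214--216]{VB2} (exactly as Proposition \ref{prop3.5} is deduced from Propositions \ref{prop33}--\ref{prop3.3}), and then upgrades the regularity by noting that $\f-\frac{\d\y_{\mathrm{N}}}{\d t}\in\mathrm{L}^{\infty}(0,T;\H)$ and feeding this into the estimate \eqref{222} to get $\y_{\mathrm{N}}\in\mathrm{L}^{\infty}(0,T;\D(\A))$, with $\C([0,T];\V)$ following from the Gelfand triple $\D(\A)\subset\V\subset\H$. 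Your additional direct Gronwall uniqueness argument via Lemma \ref{lem3.1} is consistent with what the paper does elsewhere (Proposition \ref{unique}) and is a harmless supplement.
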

\begin{proposition}
	Let $\Phi\subset \H\times \H$ satisfy Hypothesis \ref{hyp1}. Let $\f\in\W^{1,1}(0,T;\H)$ and $\y_0\in\D(\A)\cap\D(\Phi)$. Then there exists a unique strong solution 
	\begin{align*}
		\y_{\mathrm{N}}^{\lambda}\in \W^{1,\infty}(0,T;\H)\cap \mathrm{L}^{\infty}(0,T;\D(\A))\cap \C([0,T];\V)
	\end{align*}
	to the problem 
	\begin{equation*}
		\left\{
		\begin{aligned}
			\frac{\d\y_{\mathrm{N}}^{\lambda}(t)}{\d t}+\mu\A\y_{\mathrm{N}}^{\lambda}(t)+\B_{\mathrm{N}}(\y_{\mathrm{N}}^{\lambda}(t))+\beta\mathcal{C}(\y_{\mathrm{N}}^{\lambda}(t))+\Phi_{\lambda}(\y_{\mathrm{N}}^{\lambda}(t))&=\f(t), \ \text{ a.e. } \ t\in[0,T], \\
			\y_{\mathrm{N}}^{\lambda}(0)&=\y_0. 
		\end{aligned}
		\right.
	\end{equation*}
	
	Furthermore, $\y_{\mathrm{N}}^{\lambda}$ is right differentiable, $\frac{\d^+\y_{\mathrm{N}}^{\lambda}}{\d t} $ is right continuous, and 
	\begin{equation*}
		\frac{\d^+\y_{\mathrm{N}}^{\lambda}(t)}{\d t}+\mu\A\y_{\mathrm{N}}^{\lambda}(t)+\B_{\mathrm{N}}(\y_{\mathrm{N}}^{\lambda}(t))+\beta\mathcal{C}(\y_{\mathrm{N}}^{\lambda}(t))+\Phi_{\lambda}(\y_{\mathrm{N}}^{\lambda}(t))= \f(t), \ \text{ for all  } \ t\in[0,T). 
	\end{equation*}
\end{proposition}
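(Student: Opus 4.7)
The plan is to mirror the argument used for Proposition~\ref{prop3.5}, substituting the quantized nonlinearity $\mathcal{B}_{\mathrm{N}}$ for $\mathcal{B}$ and the Yosida approximation $\Phi_\lambda$ for $\Phi$. The starting point is Proposition~\ref{prop3.2}, which tells us that $\mu\A+\mathcal{B}_{\mathrm{N}}(\cdot)+\beta\mathcal{C}(\cdot)+\eta_{\mathrm{N}}\I$ is $m$-accretive on $\H\times\H$ with domain $\D(\A)$. Since $\Phi_\lambda:\H\to\H$ is everywhere-defined, single-valued, $\frac{1}{\lambda}$-Lipschitz and monotone (by \cite[Chapter~2, Proposition~1.3]{VB2}), it is itself $m$-accretive on $\H$, and adding it to the previous operator preserves $m$-accretivity via the perturbation theorem \cite[Chapter~II, Theorem~3.5]{VB1}. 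Thus
\[
\mathcal{F}^{\mathrm{N}}_\lambda(\cdot) := \mu\A + \mathcal{B}_{\mathrm{N}}(\cdot) + \beta\mathcal{C}(\cdot) + \Phi_\lambda(\cdot) + \eta_{\mathrm{N}}\I
\]
is $m$-accretive on $\H\times\H$ with $\D(\mathcal{F}^{\mathrm{N}}_\lambda)=\D(\A)$.

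With this in hand and the data $\y_0\in\D(\A)\cap\D(\Phi)\subset\D(\A)$, $\f\in\W^{1,1}(0,T;\H)$, I will invoke the abstract Cauchy problem theory \cite[Theorems~1.4--1.6, pp.~214--216]{VB2} for $m$-accretive operators with $\W^{1,1}$ forcing to produce a unique strong solution $\y_{\mathrm{N}}^\lambda\in\W^{1,\infty}(0,T;\H)$ satisfying the equation a.e.\ in $[0,T]$, as well as the right differentiability of $\y_{\mathrm{N}}^\lambda$, right continuity of $\frac{\d^+\y_{\mathrm{N}}^\lambda}{\d t}$, and the identity at every $t\in[0,T)$. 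Since $\Phi_\lambda$ is single-valued, the minimal-selection $(\cdot)^0$ of the multivalued map collapses to the operator itself, so no additional care is needed and we obtain \eqref{3p55} literally (with $\Phi_\lambda$ in place of $\Phi$).

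It remains to upgrade the regularity to $\mathrm{L}^\infty(0,T;\D(\A))\cap\C([0,T];\V)$. From the equation, $\mu\A\y_{\mathrm{N}}^\lambda + \mathcal{B}_{\mathrm{N}}(\y_{\mathrm{N}}^\lambda)+\beta\mathcal{C}(\y_{\mathrm{N}}^\lambda)+\Phi_\lambda(\y_{\mathrm{N}}^\lambda) = \f - \frac{\d\y_{\mathrm{N}}^\lambda}{\d t}$, and the right-hand side lies in $\mathrm{L}^\infty(0,T;\H)$ (using $\W^{1,1}(0,T;\H)\hookrightarrow\C([0,T];\H)$ and $\y_{\mathrm{N}}^\lambda\in\W^{1,\infty}(0,T;\H)$). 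Estimate \eqref{111} of Proposition~\ref{propA3}, together with the uniform-in-$t$ bound for $\|\y_{\mathrm{N}}^\lambda\|_{\H}$, then gives $\A\y_{\mathrm{N}}^\lambda\in\mathrm{L}^\infty(0,T;\H)$. Combined with the Gelfand triple $\D(\A)\hookrightarrow\V\hookrightarrow\H$ and the absolute continuity of $\y_{\mathrm{N}}^\lambda$ as an $\H$-valued map, a standard interpolation argument yields $\y_{\mathrm{N}}^\lambda\in\C([0,T];\V)$.

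The only nontrivial point is verifying $m$-accretivity of $\mathcal{F}^{\mathrm{N}}_\lambda$ in Step~1; once that is settled, everything else is an application of the by-now-familiar abstract machinery from \cite{VB1,VB2} together with the a priori bound in \eqref{111}. There is no new analytical difficulty compared with Proposition~\ref{prop3.5}, because the Lipschitz character of $\Phi_\lambda$ makes the perturbation step much softer than handling the multi-valued $\Phi$, and the quantization ensures that $\mathcal{B}_{\mathrm{N}}$ interacts with the other terms exactly as in Proposition~\ref{prop3.2}.
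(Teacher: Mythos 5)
Your proposal is correct and follows essentially the same route the paper intends: establish $m$-accretivity of the quantized operator with the Lipschitz, monotone perturbation $\Phi_\lambda$ (the paper's Proposition \ref{propA3} together with the standard fact that adding an everywhere-defined demicontinuous monotone map preserves maximal monotonicity, as in Step I of Proposition \ref{prop3.3}), invoke the abstract Cauchy-problem theory of \cite[Theorems 1.4--1.6]{VB2} for the $\W^{1,\infty}(0,T;\H)$ solution and the right-derivative identity, and then bootstrap to $\mathrm{L}^\infty(0,T;\D(\A))\cap\C([0,T];\V)$ via the equation and estimate \eqref{111}. No gaps.
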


\begin{theorem}\label{thm12}
	Let $T>0$ and assume that $\Phi\subset\H\times\H$ satisfies Hypothesis \ref{hyp1}. Let $\y_0\in\V\cap\D(\Phi)$ and $\f \in \mathrm{L}^2(0,T;\H)$. For $d=2$ with $r\in[1,3]$, there exists a unique strong solution $$\y\in \C([0,T];\V)\cap\mathrm{L}^{2}(0,T;\D(\A))\cap\mathrm{W}^{1,2}(0,T;\H)$$   such that in $\H$
	\begin{equation}\label{17}
		\left\{
		\begin{aligned}
			\frac{\d \y(t)}{\d t}+\mu\A\y(t)+\mathcal{B}(\y(t))+\beta\mathcal{C}(\y(t))+\Phi(\y(t))&\ni \f(t), \ \text{ a.e. } \ t\in[0,T], \\
			\y(0)&=\y_0. 
		\end{aligned}
		\right.
	\end{equation}
	For $d=3$ with $r\in[1,3]$ and $2\beta\mu<1$ for $r=3$, there exists a time $$T_0=T_0\left(\|\y_0\|_{\V},\|\f\|_{\mathrm{L}^2(0,T;\H)}\right)\leq T$$ such that   the solution $\y$ exists on some interval $[0,T_0)$. 
\end{theorem}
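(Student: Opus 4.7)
The plan is to apply the three propositions above in this appendix to the twice-approximated problem obtained by combining the $\wi\L^4$-quantization $\mathcal{B}_\mathrm{N}$ of the Navier--Stokes nonlinearity with the Yosida regularization $\Phi_\lambda$ of the subdifferential, and then to pass to the limit first in $\lambda$ and afterwards in $\mathrm{N}$. For each pair $(\mathrm{N},\lambda)$, the last proposition of this appendix already furnishes a unique strong solution $\y_{\mathrm{N}}^{\lambda}\in\W^{1,\infty}(0,T;\H)\cap\mathrm{L}^{\infty}(0,T;\D(\A))\cap\C([0,T];\V)$, so the real task is to obtain energy estimates that are uniform in both $\mathrm{N}$ and $\lambda$ on an appropriate time interval.

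The basic $\H$-level estimate is global and clean: testing the equation against $\y_{\mathrm{N}}^{\lambda}$ annihilates the quantized convective term (integration by parts survives the scalar rescaling in \eqref{Q1}), and using $(\Phi_\lambda(\y),\y)\geq-\frac{1}{2}(\|\Phi(\boldsymbol{0})\|_{\H}^2+\|\y\|_{\H}^2)$ together with Gronwall yields a uniform-in-$(\mathrm{N},\lambda)$ bound in $\mathrm{L}^{\infty}(0,T;\H)\cap\mathrm{L}^{2}(0,T;\V)\cap\mathrm{L}^{r+1}(0,T;\wi\L^{r+1})$. For the $\V$-level estimate I would test against $\A\y_{\mathrm{N}}^{\lambda}$, handle the potential term via Hypothesis \ref{hyp1}(H.3), and bound the quantized convective contribution using $|(\mathcal{B}_\mathrm{N}(\y_{\mathrm{N}}^{\lambda}),\A\y_{\mathrm{N}}^{\lambda})|\leq\|\y_{\mathrm{N}}^{\lambda}\|_{\wi\L^{4}}\|\nabla\y_{\mathrm{N}}^{\lambda}\|_{\wi\L^{4}}\|\A\y_{\mathrm{N}}^{\lambda}\|_{\H}$ followed by the interpolation $\|\nabla\y\|_{\wi\L^{4}}\leq C\|\nabla\y\|_{\H}^{1-d/4}\|\A\y\|_{\H}^{d/4}$. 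After absorbing into the viscous term via Young's inequality and controlling $\|\Phi_\lambda(\y_{\mathrm{N}}^{\lambda})\|_{\H}$ by testing against $\Phi_\lambda(\y_{\mathrm{N}}^{\lambda})$ as in Step IV of Proposition \ref{prop4.1}, one arrives at a differential inequality for $\|\nabla\y_{\mathrm{N}}^{\lambda}\|_{\H}^{2}$ whose nonlinear term is linear when $d=2$ (giving global-in-$T$ bounds) and quartic when $d=3$ (giving only a local existence time $T_{0}=T_{0}(\|\y_{0}\|_{\V},\|\f\|_{\mathrm{L}^{2}(0,T;\H)})$). This simultaneously produces the estimates $\y_{\mathrm{N}}^{\lambda}\in\mathrm{L}^{2}(0,T_{0};\D(\A))\cap\mathrm{W}^{1,2}(0,T_{0};\H)$ uniformly.

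With these bounds, Banach--Alaoglu together with the Aubin--Lions compactness lemma gives, along a subsequence, the strong convergence $\y_{\mathrm{N}}^{\lambda}\to\y_\mathrm{N}$ in $\mathrm{L}^{2}(0,T_{0};\V)\cap\C([0,T_{0}];\H)$ and the weak convergence $\A\y_{\mathrm{N}}^{\lambda}\rightharpoonup\A\y_{\mathrm{N}}$ and $\Phi_\lambda(\y_{\mathrm{N}}^{\lambda})\rightharpoonup\phi_{\mathrm{N}}$ in $\mathrm{L}^{2}(0,T_{0};\H)$. Invoking the weak-strong closedness of the maximal monotone $\Phi$ exactly as in Proposition \ref{soln} identifies $\phi_{\mathrm{N}}\in\Phi(\y_{\mathrm{N}})$ a.e.\ and yields a solution $\y_{\mathrm{N}}$ of the $\mathrm{N}$-quantized problem \eqref{apq}. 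To dequantize, note that the $\V$-bound is independent of $\mathrm{N}$, so Ladyzhenskaya's inequality gives $\sup_{t\in[0,T_{0}]}\|\y_{\mathrm{N}}(t)\|_{\wi\L^{4}}\leq M$ uniformly in $\mathrm{N}$; choosing $\mathrm{N}>M$ makes the quantization inactive so that $\mathcal{B}_{\mathrm{N}}(\y_{\mathrm{N}})=\mathcal{B}(\y_{\mathrm{N}})$ on $[0,T_{0}]$ and $\y:=\y_{\mathrm{N}}$ solves \eqref{17}. Uniqueness is obtained as in Proposition \ref{unique} by subtracting two solutions, testing with the difference, estimating $|(\mathcal{B}(\y_{1})-\mathcal{B}(\y_{2}),\y_{1}-\y_{2})|$ via Ladyzhenskaya/Young so as to absorb into $\mu\|\nabla(\y_{1}-\y_{2})\|_{\H}^{2}$, and applying Gronwall. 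The hardest part will be closing the $\V$-estimate in $d=3$ without any $\mathrm{N}$-dependent control on $\mathcal{B}_{\mathrm{N}}$, which is precisely why only local existence is claimed there; the quantization is essential for running the $m$-accretivity machinery of Propositions \ref{prop3.2}--\ref{propA3} when the pure Forchheimer monotonicity is not strong enough (i.e.\ $r<3$, or $r=3$ with $2\beta\mu<1$).
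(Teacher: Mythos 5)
Your overall architecture coincides with the paper's: double approximation by the $\wi\L^4$-quantization $\mathcal{B}_{\mathrm{N}}$ and the Yosida regularization $\Phi_\lambda$, uniform $\H$- and $\V$-level energy estimates (global for $d=2$, local for $d=3$ because of the $\|\nabla\y\|_{\H}^{6}$ term), passage to the limit $\lambda\to 0$ via Banach--Alaoglu, Aubin--Lions and the weak-strong closedness of $\Phi$, dequantization by choosing $\mathrm{N}$ larger than the $\mathrm{N}$-independent bound on $\sup_{t}\|\y_{\mathrm{N}}(t)\|_{\V}$, and uniqueness as in Proposition \ref{unique}. (The paper phrases the dequantization through the set $\mathrm{E}_{\mathrm{N}}=\{t:\|\y_{\mathrm{N}}(t)\|_{\V}\leq\mathrm{N}\}$ and a Markov-inequality argument, but your version is equivalent. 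Also, for $d=2$ one has $(\mathcal{B}(\y),\A\y)=0$ on the torus, so the convective contribution you estimate by interpolation actually vanishes there; this only simplifies your argument.)

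There is, however, one genuine gap. The existence propositions of this appendix that you invoke to produce $\y_{\mathrm{N}}^{\lambda}$ require $\y_0\in\D(\A)\cap\D(\Phi)$ and $\f\in\W^{1,1}(0,T;\H)$, whereas Theorem \ref{thm12} assumes only $\y_0\in\V\cap\D(\Phi)$ and $\f\in\mathrm{L}^{2}(0,T;\H)$. Your proof as written therefore establishes the conclusion only for the more regular data, and the weakening of the hypotheses on $(\y_0,\f)$ is precisely what distinguishes this theorem from Theorem \ref{thm1.2}. The paper closes this gap by a density argument: one approximates $\y_0$ by a sequence in $\D(\A)\cap\D(\Phi)$ and $\f$ by a sequence in $\W^{1,1}(0,T;\H)$, solves the regularized problems, and uses the fact that the a priori bounds in \eqref{356}--type estimates and the $\V$-level estimate depend only on $\|\y_0\|_{\V}$, $\varphi(\y_0)$ and $\|\f\|_{\mathrm{L}^2(0,T;\H)}$ (together with the Lipschitz-in-$\H$ stability of Proposition \ref{unique}) to pass to the limit in the data, following the scheme of \cite[Theorems 2.2 and 2.3]{AIL}. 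You need to add this step; without it the statement as formulated is not proved.
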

\begin{proof}[Proofs of Theorem \ref{thm12}]
	 For the case $d=2,3$ and $r\in[1,3]$, calculations similar to the energy estimates \eqref{356} yields
 \begin{align}
	&\|\nabla\y_{\mathrm{N}}^{\lambda}(t)\|_{\H}^{2}+\mu\int_{0}^{t}\|\A\y_{\mathrm{N}}^{\lambda}(s)\|_{\H}^{2}+2\beta\int_{0}^{t}\||\nabla \y_{\mathrm{N}}^{\lambda}(s)||\y_{\mathrm{N}}^{\lambda}(s)|^{\frac{r-1}{2}}\|_{\H}^{2}\d s\nonumber\\&\leq C+\begin{cases}
		C\sup\limits_{s\in[0,t]}\|\y^{\lambda}_{\mathrm{N}}(s)\|_{\H}^{2}\int_0^t\|\nabla\y^{\lambda}_{\mathrm{N}}(s)\|_{\H}^{2(r-1)}\d s, &\ \text{for} \ d=2, \\
		C\sup\limits_{s\in[0,t]}\|\y^{\lambda}_{\mathrm{N}}(s)\|_{\H}^{3-r}\int_0^t\|\nabla\y^{\lambda}_{\mathrm{N}}(s)\|_{\H}^{3(r-1)}\d s+\int_{0}^{t}\|\nabla\y_{\mathrm{N}}^{\lambda}(s)\|_{\H}^{6}\d s, &\ \text{for} \ d=3, 
	\end{cases}
\end{align}
where $C= C\left(\|\y_0\|_{\V}, \|\f\|_{\mathrm{L}^2(0,T;\H)},\|\Phi(\boldsymbol{0})\|_{\H}\right).$ Then one can use the similar techniques as in the proof \cite[Theorem 2.1]{AIL} to pass $\lambda\to 0$ and then use Gronwall's inequality to obtain 
\begin{align*}
	\|\y_{\mathrm{N}}(t)\|_{\V}\leq C ,  \  \text{ for all } \ t\in[0,T_0],
\end{align*}
where constant $C$ is independent of $\mathrm{N}$ and $T_0=T$ for $d=2$ and $T_0<T$ for $d=3.$ Hence for large $\mathrm{N}$, we can choose $\mathrm{N}\geq C$ so that $\mathcal{B}_{\mathrm{N}}(\y_\mathrm{N})=\mathcal{B}(\y_{\mathrm{N}})$ and therefore $\y_\mathrm{N}=\y$ is a solution of \eqref{1p4} with the  regularity properties given in Theorem \ref{thm1.2}. So, $\y_\mathrm{N}$ satisfies \eqref{1p4} on the set 
$\mathrm{E}_\mathrm{N}=\{t\in[0,T]:\|\y_\mathrm{N}(t)\|_{\V}\leq\mathrm{N}\}.$
 By using Markov's inequality, we have 
\begin{align}\label{Q4}
	m([0,T]/\mathrm{E}_\mathrm{N})\leq\frac{C}{\mathrm{N}^2},
\end{align}
where $m$ is the Lebesgue measure. Since 
$m([0,T])=m(\mathrm{E}_\mathrm{N})+m([0,T]/\mathrm{E}_\mathrm{N}),$
 for large $\mathrm{N}$, from \eqref{Q4}, we conclude that $m([0,T])=m(\mathrm{E_\mathrm{N}})$ and $\y(\cdot)$ satisfies \eqref{1p4} for a.e. $t\in[0,T]$.    The case of $\y_0\in\V\cap\D(\Phi)$ and $\f \in \mathrm{L}^2(0,T;\H)$ in Theorem \ref{thm12} can be completed by a density argument as in the proof of \cite[Theorems 2.2 and 2.3]{AIL}. 
\end{proof}
	\end{appendix}

\medskip\noindent
\textbf{Acknowledgments:} The first author would like to thank Ministry of Education, Government of India - MHRD for financial assistance. K. Kinra would like to thank the Council of Scientific $\&$ Industrial Research (CSIR), India for financial assistance (File No. 09/143(0938)/2019-EMR-I).  M. T. Mohan would  like to thank the Department of Science and Technology (DST), India for Innovation in Science Pursuit for Inspired Research (INSPIRE) Faculty Award (IFA17-MA110).  The authors would like to thank Prof. S. S. Sritharan for helpful discussions. The authors would like to thank the reviewers for their valuable comments and suggestions.

\end{document}